\documentclass{article}

\usepackage{amsthm}
\usepackage[english]{babel}
\usepackage{amsmath}
\usepackage{amssymb}
\usepackage[%
]{hyperref}
\usepackage{booktabs}
\usepackage{enumerate}
\usepackage[left = 25mm, top = 40mm, bottom = 35mm, right = 25mm]{geometry}
\usepackage{multicol}
\usepackage{bbm}
\usepackage[]{todonotes}
\usepackage{setspace}
\usepackage{mathtools}
\usepackage {url}

\theoremstyle{definition}
\newtheorem{definition}{Definition}
\newtheorem{Remark}[definition]{Remark}
\newtheorem{Example}[definition]{Example}

\theoremstyle{plain}
\newtheorem{theorem}[definition]{Theorem} 
\newtheorem{lemma}[definition]{Lemma}
\newtheorem{Proposition}[definition]{Proposition}
\newtheorem{Corollary}[definition]{Corollary}

\newtheorem*{Question*}{Question}

\newtheorem*{theorem*}{Theorem}
\newtheorem{theoremintro}{Theorem}

\newcommand{\soc}{\operatorname{soc}}
\newcommand{\Cl}{\operatorname{Cl}}
\newcommand{\J}{J}

\newcommand{\Z}{\mathbb{Z}}
\newcommand{\ord}{\operatorname{ord}}
\newcommand{\Ann}{\operatorname{Ann}}
\newcommand{\N}{\mathbb{N}}

\newcommand{\Syl}{\operatorname{Syl}}

\newcommand{\F}{\mathbb{F}}

\newcommand{\Ker}{\operatorname{Ker}}
\newcommand{\lAnn}{\operatorname{lAnn}}
\newcommand{\rAnn}{\operatorname{rAnn}}

\newcommand{\DCL}[2]{\operatorname{Cl}_{p',#2}(#1)}

\newcommand{\IDCL}[2]{\operatorname{\overline{Cl}}_{p',#2}(#1)}
\newcommand{\SDCL}[2]{\operatorname{Cl}^+_{p',#2}(#1)}

\newcommand{\SIDCL}[2]{\operatorname{\overline{Cl}}^+_{p',#2}(#1)}
\newcommand{\DCLi}[1]{\operatorname{Cl}_{p',#1}}
\newcommand{\IDCLi}[1]{\operatorname{\overline{Cl}}_{p',#1}}
\newcommand{\SIDCLi}[1]{\operatorname{\overline{Cl}}^+_{p',#1}}
\newcommand{\DCLab}{\operatorname{Cl}_{p'}}
\newcommand{\IDCLab}{\operatorname{\overline{Cl}}_{p'}}
\newcommand{\SIDCLab}{\operatorname{\overline{Cl}}^+_{p'}}

\renewcommand{\Im}{\operatorname{Im}}

\parindent 0pt
\numberwithin{definition}{section}
\numberwithin{equation}{section}


\title{Group algebras in which the socle of the center is an ideal\\[0.2cm]}
\author{\textbf{Sofia Brenner}\footnote{Current address: Department of Mathematics, TU Darmstadt, Germany. E-mail address: \texttt{sofia.brenner@tu-darmstadt.de}} \\
	\normalsize\emph{Institute for Mathematics, Friedrich Schiller University Jena, Germany}\\
	\texttt{sofia.bettina.brenner@uni-jena.de}\\[0.4cm]
	\textbf{Burkhard K\"ulshammer} \\
	\normalsize\emph{Institute for Mathematics, Friedrich Schiller University Jena, Germany}\\
	\texttt{kuelshammer@uni-jena.de}}
\date{\vspace{-0.5cm}}

\begin{document}
\maketitle
\begin{abstract}
\noindent
Let $F$ be a field of characteristic $p > 0$. We study the structure of the finite groups $G$ for which the socle of the center of $FG$ is an ideal in $FG$ and classify the finite $p$-groups $G$ with this property. Moreover, we give an explicit description of the finite groups $G$ for which the Reynolds ideal of $FG$ is an ideal in $FG$.
\end{abstract}

\section{Introduction}
Let $F$ be a field and consider the group algebra $FG$ of a finite group $G$ and its center $ZFG$. The question when the Jacobson radical of $ZFG$ is an ideal in $FG$ has been answered by Clarke \cite{CLA69}, Koshitani \cite{KOS78} and K\"ulshammer \cite{KUL20}. We now study the corresponding problem for the socle $\soc(ZFG)$ of $ZFG$ as well as for the Reynolds ideal $R(FG)$ of $FG$. In a prequel to this paper~\cite{BRE221}, we have already given some approaches to these problems for general symmetric algebras. Now, our aim is to analyze the structure of the finite groups~$G$ for which $\soc(ZFG)$ or $R(FG)$ are ideals of $FG$ in a group-theoretic manner. For the Reynolds ideal, we obtain the following characterization:

\begin{theoremintro}\label{theo:a}
Let $F$ be a field of characteristic $p > 0$ and let $G$ be a finite group. Then the Reynolds ideal $R(FG)$ is an ideal in $FG$ if and only if $G'$ is contained in the $p$-core $O_p(G)$ of $G$.
\end{theoremintro}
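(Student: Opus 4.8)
The plan is to first collapse the question to the commutativity of $FG/J(FG)$, using that $FG$ is a symmetric algebra, and then translate that algebra-theoretic condition into the asserted group-theoretic one. Recall that $FG$ carries the symmetrizing form $\lambda\colon FG\to F$, $\sum_{g\in G}a_g g\mapsto a_1$, and that with respect to the associated nondegenerate bilinear form one has $\soc(FG)=J(FG)^{\perp}$ and $ZFG=[FG,FG]^{\perp}$, where $[FG,FG]$ is the $F$-span of the Lie commutators. Hence $R(FG)=\soc(FG)\cap ZFG=(J(FG)+[FG,FG])^{\perp}$. Since, for a symmetric algebra, $V\mapsto V^{\perp}$ interchanges two-sided ideals with two-sided ideals, $R(FG)$ is an ideal of $FG$ if and only if $J(FG)+[FG,FG]$ is an ideal of $FG$; as $J(FG)$ is already an ideal, this holds if and only if the image of $[FG,FG]$ in the semisimple algebra $FG/J(FG)$ is an ideal. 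Writing $FG/J(FG)\cong\prod_i M_{n_i}(D_i)$ with division algebras $D_i$, this image decomposes as $\prod_i[M_{n_i}(D_i),M_{n_i}(D_i)]$, and since each factor is a \emph{proper} subspace of the simple algebra $M_{n_i}(D_i)$ (it lies in the kernel of a reduced trace), it is a two-sided ideal there precisely when it vanishes, i.e. when $M_{n_i}(D_i)$ is commutative. Thus $R(FG)$ is an ideal of $FG$ if and only if $FG/J(FG)$ is commutative, equivalently $[FG,FG]\subseteq J(FG)$. (The general symmetric-algebra steps are presumably available from \cite{BRE221}; alternatively one may first extend scalars to a splitting field, where the $D_i$ disappear.)

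It remains to show that $[FG,FG]\subseteq J(FG)$ if and only if $G'\subseteq O_p(G)$. Assume first $G'\subseteq O_p(G)=:P$. Then $(G/P)'=G'P/P$ is trivial, so $G/P$ is abelian and $F[G/P]$ is commutative. The kernel of the natural surjection $FG\to F[G/P]$ is a nilpotent ideal (being generated by the nilpotent augmentation ideal of $FP$, which is normalized by $G$), hence contained in $J(FG)$; therefore $FG/J(FG)$ is a homomorphic image of the commutative algebra $F[G/P]$, so $[FG,FG]\subseteq J(FG)$.

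Conversely, assume $[FG,FG]\subseteq J(FG)$. Then for all $g,h\in G$ we have $gh-hg\in J(FG)$, and multiplying by the unit $(hg)^{-1}$ yields $ghg^{-1}h^{-1}-1\in J(FG)$. Put $N:=\{x\in G:x-1\in J(FG)\}$; since $J(FG)$ is a two-sided ideal, $N$ is a normal subgroup of $G$. On the one hand $O_p(G)\subseteq N$, because the kernel of $FG\to F[G/O_p(G)]$ lies in $J(FG)$. On the other hand, every $p'$-element $y\in N$ is trivial: the subalgebra $F\langle y\rangle$ is semisimple, so $J(F\langle y\rangle)=0$, while $y-1\in J(FG)\cap F\langle y\rangle$, and $J(FG)\cap F\langle y\rangle$ is a nilpotent ideal of $F\langle y\rangle$, hence contained in $J(F\langle y\rangle)=0$. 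Consequently $N$ has no nontrivial $p'$-element, so $N$ is a $p$-group and therefore $N=O_p(G)$. Since all group commutators $ghg^{-1}h^{-1}$ lie in $N=O_p(G)$, we conclude $G'\subseteq O_p(G)$, which completes the proof.

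The main obstacle is the first paragraph: recognizing that the entire problem reduces to the commutativity of $FG/J(FG)$. This relies on the symmetric-algebra description $R(FG)=(J(FG)+[FG,FG])^{\perp}$ together with the interplay between $\perp$ and ideals, and — in the non-split case — on controlling the commutator subspaces of the matrix algebras $M_{n_i}(D_i)$ over division rings; the subsequent group-theoretic translation is comparatively routine.
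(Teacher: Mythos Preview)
Your proof is correct and follows the same overall strategy as the paper: reduce the question to the commutativity of $FG/J(FG)$, then translate that into $G'\subseteq O_p(G)$. The differences are in packaging. For the first reduction the paper simply quotes \cite[Lemma~2.2]{BRE221} (that $R(A)\trianglelefteq A$ forces a symmetric algebra $A$ to be basic) and uses that, over an algebraically closed field, basic means $A/J(A)$ commutative; you instead unfold this directly via $R(FG)=(J(FG)+[FG,FG])^{\perp}$, the fact that $\perp$ preserves two-sided ideals, and the Wedderburn decomposition---more self-contained, and (with your scalar-extension remark) valid over any $F$ of characteristic~$p$. For the implication $G'\subseteq O_p(G)\Rightarrow R(FG)\trianglelefteq FG$, the paper passes through Schur--Zassenhaus to write $G=P\rtimes H$ and then computes $\soc(FG)=P^+\cdot FG$ explicitly, obtaining as a bonus the formula $R(FG)=O_p(G)^+\cdot FG$; your route---$F[G/O_p(G)]$ is commutative and surjects onto $FG/J(FG)$---is shorter but does not produce that explicit description. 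Your argument for the converse, via the normal subgroup $N=\{x\in G:x-1\in J(FG)\}$, is essentially the paper's argument that every element of $G'$ has $p$-power order, rephrased.
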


As a consequence of this result, it follows that if $\soc(ZFG)$ is an ideal in $FG$, one has $G = P \rtimes H$ for a Sylow $p$-subgroup~$P$ of $G$ and an abelian $p'$-group $H$. Based on this decomposition, we derive some fundamental results on the structure of finite groups $G$ for which $\soc(ZFG)$ is an ideal in $FG$. Subsequently, we classify the finite $p$-groups $G$ with this property:

\begin{theoremintro}\label{theo:pgroups}
Let $F$ be a field of characteristic $p > 0$ and let $G$ be a finite $p$-group. Then $\soc(ZFG)$ is an ideal in $FG$ if and only if 
\begin{enumerate}[(i)]
\item $G$ has nilpotency class at most two, that is, $G' \subseteq Z(G)$ holds, or
\item $p = 2$ and $G' \subseteq Y(G) Z(G)$ with $Y(G) = \langle fg^{-1} \colon \{f,g\} \text{ is a conjugacy class of length 2 of }G \rangle.$
\end{enumerate}
In particular, $G$ is metabelian.
\end{theoremintro}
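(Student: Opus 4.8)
\emph{Overview of the plan.} The plan is to translate ``$\soc(ZFG)$ is an ideal'' into a condition on $G'$, prove the two implications separately, and reserve the bulk of the work for the converse.

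\emph{Reduction.} Since $\soc(ZFG)$ consists of central elements, ``left ideal'', ``right ideal'' and ``ideal'' coincide for it, so $\soc(ZFG)$ is an ideal iff $g\cdot\soc(ZFG)\subseteq\soc(ZFG)$ for every $g\in G$. Because $\soc(ZFG)$ annihilates $J(ZFG)$, any element $gz$ with $z\in\soc(ZFG)$ lies in $\soc(ZFG)$ as soon as it is central; hence $\soc(ZFG)$ is an ideal iff $gz$ is central for all $g\in G$, $z\in\soc(ZFG)$, i.e. iff $[FG,FG]\cdot\soc(ZFG)=0$. Writing $[FG,FG]$ as the $F$-span of the elements $x-x^{h}$ ($x,h\in G$), and noting that $(x-x^{h})z=0$ iff $[x,h]z=z$, this says exactly that $G'$ fixes $\soc(ZFG)$ pointwise under left multiplication. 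As the fixed points of the left regular action of a normal subgroup $N\le G$ on $FG$ form the subspace $\widehat N\,FG$ (where $\widehat N=\sum_{n\in N}n$), I obtain the criterion
\[
\soc(ZFG)\text{ is an ideal in }FG\ \Longleftrightarrow\ \soc(ZFG)\subseteq\widehat{G'}\,FG\ \Longleftrightarrow\ G'\subseteq S,
\]
where $S:=\{g\in G:(g-1)\soc(ZFG)=0\}$ is a subgroup of $G$. (Equivalently $\widehat{G'}FG=\lAnn_{FG}(I)$ for $I=\ker(FG\to F[G/G'])$, so the criterion reads $\soc(ZFG)\subseteq\lAnn_{FG}(I)$.)

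\emph{The easy implication.} One always has $Z(G)\subseteq S$: for central $c$ the element $c-1$ lies in $J(ZFG)$, which $\soc(ZFG)$ annihilates. If $p=2$, also $Y(G)\subseteq S$: for a conjugacy class $\{f,g\}$ of length $2$ we have $f-g=f+g=\widehat{\{f,g\}}\in J(ZFG)$, so $(fg^{-1}-1)z=(f-g)g^{-1}z=0$ for all $z\in\soc(ZFG)$. Thus (i) gives $G'\subseteq Z(G)\subseteq S$ and (ii) gives $G'\subseteq Y(G)Z(G)\subseteq S$, so $\soc(ZFG)$ is an ideal in either case. The metabelianness follows: under (i), $G''\subseteq[Z(G),Z(G)]=1$; under (ii), $G''\subseteq[Y(G)Z(G),Y(G)Z(G)]=[Y(G),Y(G)]$, which is trivial since $G'$ — hence its subgroup $Y(G)$ — is abelian by the structural results preceding this theorem.

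\emph{The converse, and the main obstacle.} Suppose $\soc(ZFG)$ is an ideal, so $G'\subseteq S$; I must deduce (i), or else $p=2$ and (ii) — equivalently $S\cap G'\subseteq Z(G)$ for odd $p$ and $S\cap G'\subseteq Y(G)Z(G)$ for $p=2$. So let $u\in G'$ with $u\notin Z(G)$ (and, if $p=2$, $u\notin Y(G)Z(G)$); I must produce $z\in\soc(ZFG)$ with $uz\ne z$. For this I would make $\soc(ZFG)$ explicit: setting $\overline G=G/Z(G)$, the coefficient function of any $z\in\soc(ZFG)$ is constant both on conjugacy classes and on cosets of $Z(G)$, so $z=\widehat{Z(G)}\,\widetilde z$ is the lift of a central element $\widetilde z\in ZF\overline G$, subject to the further relations $z\widehat C=0$ for every non-central conjugacy class $C$ of $G$ (these, together with the elements $c-1$ for $c\in Z(G)$, generate $J(ZFG)$ as an ideal of $ZFG$). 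The task is then to show that this linear system forces the associated class function on $\overline G$ to be constant on cosets of $\overline{G'}=G'Z(G)/Z(G)$ exactly when (i) or (ii) holds, and, in the contrary case, to build from $u$ a class function that violates this while its lift still kills all the $\widehat C$. The delicate point — which I expect to be the heart of the argument — is that the relation $z\widehat C=0$ imposes nothing when $p$ divides the fibre size $|C|/|\overline C|$; in characteristic $2$ this is precisely the slack that produces an extra supply of socle elements attached to length-$2$ classes, explaining the role of $Y(G)$ in (ii) and the absence of any analogue for odd $p$. I would organise this as an induction on $|G|$, passing to a quotient $G/N$ — by a central subgroup of order $p$, or by a minimal normal subgroup contained in $G'\cap Z(G)$ — chosen so that the image of $u$ still witnesses the failure of (i)/(ii), and using the compatibility of the criterion above with such quotients together with the structure theory for these groups developed earlier in the paper.
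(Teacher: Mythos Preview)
Your reduction and the easy implication are correct and line up with the paper: the criterion $\soc(ZFG)\trianglelefteq FG \Leftrightarrow \soc(ZFG)\subseteq (G')^+FG$ is exactly Lemma~\ref{lemma:socideal}, and your treatment of $Z(G)\subseteq S$ and $Y(G)\subseteq S$ (via $f+g\in J(ZFG)$ in characteristic~$2$) reproduces Lemmas~\ref{lemma:centerpgroups} and~\ref{lemma:sociny}. One small circularity: in deducing metabelianness under~(ii) you invoke ``$G'$ abelian, hence $Y(G)$ abelian'', but abelianness of $G'$ is what you are trying to conclude. The paper proves $Y(G)\subseteq Z(\Phi(G))$ directly (Lemma~\ref{lemma:ygabelian}); once you have that, $Y(G)Z(G)$ is abelian and the implication goes the right way.

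The converse, however, is not proved in your proposal---only sketched, and the sketch omits precisely the content that makes the argument work. Two concrete gaps:

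\emph{Odd $p$.} You propose to ``build from $u$ a class function'' annihilating the relevant class sums, but give no construction. The paper's device (Proposition~\ref{prop:npcnonconstantsubgroups} and Theorem~\ref{theo:odd}) is to reduce by induction to $c(G)=3$, pass to $\bar G=G/Z(G)$ of class~$2$, pick a nontrivial additive homomorphism $\alpha\colon \bar G'\to F$, and set $y=\sum_{g\in\bar G'}\alpha(g)\,g$. Every class of $\bar G$ is a coset $Sc$ of a subgroup $S\le\bar G'$, and the key computation $\sum_{s\in S}\alpha(ws^{-1})=\alpha\bigl(\prod_{s\in S}s^{-1}\bigr)=\alpha(1)=0$ uses the elementary fact that in an abelian $p$-group with $p$ odd the product of all elements is~$1$. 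This is the heart of the odd-$p$ case, and nothing in your outline hints at it; your plan to pass to $G/N$ for a central $N$ of order~$p$ does not obviously preserve the failure of~(i), and in any case you still need a base case.

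\emph{$p=2$.} Here you correctly identify that length-$2$ classes are the source of extra socle elements, but again no construction is given. The paper's move (Theorem~\ref{theo:characterization2groups}) is explicit and does not use induction: if $G'\not\subseteq Y(G)Z(G)$, choose $N\trianglelefteq G$ with $Y(G)Z(G)\cap G'\subseteq N\subsetneq G'$ and $|G':N|=2$, set $M=Y(G)Z(G)N$, and check that $M^+$ annihilates every basis element of $J(ZFG)$ (classes of length~$1$ via $Z(G)\subseteq M$, length~$2$ via $Y(G)\subseteq M$, length~$\ge 4$ via an index-$2$ parity count against~$N$), while $M^+\notin(G')^+FG$ since $M\cap G'=N\subsetneq G'$.

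Finally, note that your reformulation ``$S\cap G'\subseteq Z(G)$ (resp.\ $Y(G)Z(G)$)'' is stronger than what is needed: to contradict $G'\subseteq S$ you only need \emph{one} $u\in G'\setminus S$, not that every $u\in G'\setminus Z(G)$ lies outside~$S$. The paper proves only the weaker statement, and it is not clear your stronger target is even true.
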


Note that since the $p$-groups of nilpotency class at most two form a large subclass of the finite $p$-groups, the condition that $\soc(ZFG)$ is an ideal in $FG$ is often satisfied. One implication of Theorem \ref{theo:pgroups} generalizes to arbitrary finite groups:

\begin{theoremintro}\label{theo:c}
Let $F$ be a field of characteristic
$p>0$ and let $G$ be a finite group. Suppose that one of the following holds: 
\begin{enumerate}[(i)]
\item $G' \subseteq Z(O_p(G))$, or
\item $p=2$ and $G' \subseteq Y(O_p(G)) Z(O_p(G))$.
\end{enumerate}
Then $\soc(ZFG)$ is an ideal in $FG$.
\end{theoremintro}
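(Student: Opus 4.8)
The plan is to use Theorem~\ref{theo:a} to pin down the structure of $G$, transfer the hypotheses to $O_p(G)$, apply Theorem~\ref{theo:pgroups} to the $p$-group $O_p(G)$, and finally lift the conclusion back to $G$.

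Both (i) and (ii) force $G'\subseteq O_p(G)$ (in case (ii) because $Y(O_p(G))Z(O_p(G))\subseteq O_p(G)$), so by Theorem~\ref{theo:a} the Reynolds ideal $R(FG)$ is an ideal of $FG$. Moreover $P:=O_p(G)$ is then a normal Sylow $p$-subgroup and $G/P$ is abelian, so Schur--Zassenhaus gives $G=P\rtimes H$ with $H$ an abelian $p'$-group. In this situation $J(FG)=J(FP)\,FG$ and $FG/J(FG)\cong F[G/P]$ is commutative, whence $[FG,FG]\subseteq J(FG)$; also $\soc(FG)=\widehat P\,FG=R(FG)$. Since $P'\subseteq G'$, hypothesis (i) yields $P'\subseteq Z(P)$ and hypothesis (ii) yields $p=2$ and $P'\subseteq Y(P)Z(P)$; in both cases Theorem~\ref{theo:pgroups} applies to $P$, so $\soc(ZFP)$ is an ideal of $FP$.

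Next I would record the following reformulation (which I expect to take from~\cite{BRE221} or prove directly in a few lines): for an arbitrary finite group $G$, the unique largest ideal of $FG$ contained in $ZFG$ equals $\widehat{G'}\,FG=\operatorname{span}\{\widehat{gG'}:gG'\in G/G'\}$, so that $\soc(ZFG)$ is an ideal of $FG$ if and only if $\soc(ZFG)\subseteq\widehat{G'}\,FG$, if and only if $[FG,FG]\cdot\soc(ZFG)=0$. The point is that $\widehat{G'}$ is central and $\widehat{G'}\,FG\subseteq ZFG$ because $\widehat{G'}(gx-xg)=0$; that an ideal $I\subseteq ZFG$ is characterised by ``$gz$ is central for all $g\in G$, $z\in I$'', which forces $[FG,FG]\,z=0$; and that $\{z\in ZFG:[FG,FG]z=0\}=\widehat{G'}\,FG$ follows from the symmetrizing form together with $[FG,FG]\,FG=\ker(FG\to F[G/G'])$. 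Consequently the ``old'' part is automatic: $R(FG)=\soc(FG)$ is an ideal contained in $ZFG$, hence $R(FG)\subseteq\widehat{G'}\,FG$ (equivalently, $[FG,FG]\subseteq J(FG)$ already gives $\soc(FG)\,[FG,FG]=0$). So the whole problem is to show $\soc(ZFG)\subseteq\widehat{G'}\,FG$, knowing $\soc(ZFP)\subseteq\widehat{P'}\,FP$.

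For this I would analyse $ZFG$, $J(ZFG)=J(FG)\cap ZFG$ and $\soc(ZFG)$ through the $H$-grading $FG=\bigoplus_{h\in H}(FP)h$. Since $H$ is abelian, $z=\sum_h z_h h\in ZFG$ forces every $z_h$ into the fixed algebra $(FP)^H$ and to be ``$h$-twisted central'' in $FP$; one then shows that $z\,J(ZFG)=0$, combined with the $H$-stability of $\widehat P$ and $\soc(ZFP)$ and their containment in $\widehat{P'}\,FP$, forces each $z_h$ into $\widehat{G'}\,FP$, i.e.\ $z\in\widehat{G'}\,FG$. (One may instead pass to the corner algebra $eFGe\cong(FP)^H$ for $e=|H|^{-1}\sum_{h\in H}h$ and track $\soc(ZFG)$ along the ring homomorphism $ZFG\to Z((FP)^H)$, $\sum_h z_h h\mapsto\sum_h z_h$.) I expect the genuine obstacle to lie here: bridging the gap between $P'$ and $G'=P'[P,H]$. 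Theorem~\ref{theo:pgroups} only gives $\soc(ZFP)\subseteq\widehat{P'}\,FP$, which is in general strictly larger than $\widehat{G'}\,FP$, so the surplus directions must be eliminated by the interplay of the $J(ZFG)$-annihilation condition with the $H$-action; concretely, the key lemma will be that an $H$-invariant element of $\widehat{P'}\,FP$ which satisfies the socle condition inside $ZFG$ is automatically supported on full $G'$-cosets of $P$, which is where the description of $[P,H]$ as the augmentation submodule of $P/P'$ under $H$ comes in.
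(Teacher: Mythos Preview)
Your setup is fine: the reduction to $G=P\rtimes H$ with $P=O_p(G)$ and the criterion $\soc(ZFG)\subseteq (G')^+FG$ (your paragraph on $\widehat{G'}\,FG$) match Lemma~\ref{lemma:socideal}. The gap is precisely where you flag it, and it is not a detail but the whole theorem. Your plan rests on feeding Theorem~\ref{theo:pgroups} the group $P$ and then ``lifting''; but nothing you wrote connects the $H$-graded pieces $z_h$ of an element $z\in\soc(ZFG)$ to $\soc(ZFP)$. Concretely, for $u\in Z(P)\setminus Z(G)$ one has $u-1\in J(ZFP)$ while $u-1\notin ZFG$, so annihilating $J(ZFG)$ does not force $z_h(u-1)=0$; hence the inclusion $\soc(ZFP)\subseteq (P')^+FP$ gives no information about $z_h$. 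What does the work is the \emph{extra} elements of $J(ZFG)$ coming from the $H$-action, namely $b_{[u]}=[u]_G^+-|[u]_G|\cdot 1$ for $u\in Z(P)$, and once you use those you are no longer using Theorem~\ref{theo:pgroups} at all. Your ``key lemma'' is thus a restatement of the goal, not a reduction of it.

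The paper's proof takes a different route and does not reduce case~(i) to the $p$-group classification. For~(i) it argues directly (Lemma~\ref{lemma:pabelianaux} and Theorem~\ref{theo:pabeliansocideal}): multiplying $y\in\soc(ZFG)$ by $b_{[u^{-1}]}$ for $u\in Z(P)$ and using that the elements of $h[u]$ are conjugate (Lemma~\ref{lemma:conjstructurePabelian}) yields the coefficient identity $a_{hu}=a_h$ for $h\in C_G(H)$; since $G'\subseteq Z(P)$ forces $G=C_G(H)Z(P)$, this gives $\soc(ZFG)\subseteq Z(P)^+FG\subseteq (G')^+FG$. For~(ii) the paper proves a structural fact you did not anticipate (Theorem~\ref{theo:pabeliansocidealeven}): the hypothesis $G'\subseteq Y(P)Z(P)$ forces $Y(P)\subseteq C_P(H)'$, whence $[P,H]=[Z(P),H]$ and $G$ decomposes as a \emph{direct} product $C_P(H)\times W$ with $W=H[Z(P),H]$. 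Only then is Theorem~\ref{theo:pgroups} invoked, on the single factor $C_P(H)$; case~(i) handles $W$, and Theorem~\ref{theo:centralproduct} glues the pieces. So the $p$-group theorem enters, but through a splitting of $G$, not through a comparison of $\soc(ZFG)$ with $\soc(ZFP)$.
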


The above results are major ingredients for the proof of the main result of this paper, which is a decomposition of $G$ into a central product:
 
\begin{theoremintro}\label{theo:decompositionofp}
Let $F$ be a field of characteristic $p > 0$. Suppose that $G$ is a finite group for which $\soc(ZFG)$ is an ideal in $FG$ and write $G = P \rtimes H$ for a Sylow $p$-subgroup $P$ of $G$ and an abelian $p'$-group $H$ as before. Then $G$ is the central product of the centralizer $C_P(H)$ and the $p$-residual group $O^p(G)$. Moreover, $\soc(ZFC_P(H))$ and $\soc(ZFO^p(G))$ are ideals in $FC_P(H)$ and $FO^p(G)$, respectively. Furthermore, we have 
\[\soc(ZFG) = (Z(P)G')^+ \cdot FG,\] where $(Z(P)G')^+ \in FG$ denotes the sum of the elements in $Z(P)G'$. 
\end{theoremintro}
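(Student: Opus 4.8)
The plan is to first extract the group-theoretic decomposition from the structural results already available, then to transfer the hypothesis to the two factors, and finally to compute $\soc(ZFG)$ explicitly; the explicit computation is where the real difficulty lies. By Theorem~\ref{theo:a} and the consequences drawn from it, $G = P \rtimes H$ with $P = O_p(G)$ and $H$ an abelian $p'$-group. Since every $p'$-element of $G$ is conjugate into $H$ and $H$ is abelian, $O^p(G) = \langle H^G \rangle$; using $h^x = h[h,x]$ for $x \in P$, $h \in H$, and the fact that $H$ normalises $[P,H]$, one gets $O^p(G) = [P,H] \rtimes H$ with $[P,H] = O^p(G) \cap P \trianglelefteq G$. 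The coprime action of $H$ on $P$ yields $P = C_P(H)\,[P,H]$, hence $G = C_P(H)\cdot O^p(G)$. To see that this product is central, I invoke the structural results on finite groups with $\soc(ZFG)$ an ideal established earlier in the paper: they give $[C_P(H),[P,H]] = 1$, and since $[C_P(H),H] = 1$ by definition this means $[C_P(H),O^p(G)] = 1$; moreover $C_P(H) \cap O^p(G) = C_P(H) \cap [P,H]$ is then centralised both by $P = C_P(H)[P,H]$ and by $H$, so it lies in $Z(G)$. Therefore $G = C_P(H) * O^p(G)$ is a central product. From $[C_P(H),[P,H]] = 1$ and $P = C_P(H)[P,H]$ one reads off $Z(P) = Z(C_P(H))\,Z([P,H])$ and $P' = C_P(H)'\,[P,H]'$; together with the coprime-action identity $[P,H] = [[P,H],H]$, which forces $O^p(G)' = [P,H]$ (the $p$-core of $O^p(G)$), this gives $G' = P'[P,H] = C_P(H)'\,[P,H]$ and hence $Z(P)G' = Z(C_P(H))\,C_P(H)'\cdot[P,H]$.

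Next, the socle property must be transferred to the factors. The group $C_P(H)$ is a subgroup of the $p$-group $P = O_p(G)$, and $O^p(G) \trianglelefteq G$. Using the structural results again — in particular the fact that the property ``$\soc(ZF-)$ is an ideal'' descends to $O_p(G)$ and to $O^p(G)$, together with the description of such $p$-groups in Theorem~\ref{theo:pgroups} — one concludes that $C_P(H)$ satisfies condition~(i) or~(ii) of Theorem~\ref{theo:pgroups} and that $O^p(G)$ satisfies the hypothesis of Theorem~\ref{theo:c}; consequently $\soc(ZFC_P(H))$ is an ideal in $FC_P(H)$ and $\soc(ZFO^p(G))$ is an ideal in $FO^p(G)$.

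For the formula, put $N := Z(P)G' \trianglelefteq G$; since $G' \subseteq N$, the element $N^+$ lies in $ZFG$. A direct calculation shows that $N^+$ annihilates the kernel of the projection $FG \to F[G/N]$ on both sides, and — more to the point — that $N^+ \cdot J(ZFG) = 0$, so that $N^+ \in \soc(ZFG)$; as $\soc(ZFG)$ is an ideal, this yields $N^+ FG \subseteq \soc(ZFG)$. For the reverse inclusion I would use the central product of the first step, via $FG \cong FC_P(H) \otimes_{F(C_P(H)\cap O^p(G))} FO^p(G)$, together with the explicit descriptions $\soc(ZFC_P(H)) = (Z(C_P(H))\,C_P(H)')^+ FC_P(H)$ and $\soc(ZFO^p(G)) = [P,H]^+\, FO^p(G)$ coming from the proofs of Theorems~\ref{theo:pgroups} and~\ref{theo:c}: matching centres and socles across this tensor decomposition, and recombining the two generators, one finds $\soc(ZFG) = N^+ FG$ with $N = Z(C_P(H))\,C_P(H)'\,[P,H] = Z(P)G'$.

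The main obstacle is this reverse inclusion. Showing $N^+ \in \soc(ZFG)$ is essentially coset arithmetic, but showing that $\soc(ZFG)$ is no larger — equivalently, that $\dim_F \soc(ZFG) = [G : Z(P)G']$ — requires genuine control of the radical $J(ZFG)$ of the centre, which does not reduce to group combinatorics as cleanly as the earlier steps. The tensor decomposition helps, but since the amalgamating subgroup $C_P(H) \cap O^p(G)$ is a non-semisimple central $p$-subgroup, products of the two socle generators can collapse; the bookkeeping therefore has to be carried out carefully, leaning on the explicit socle computations already performed in the proofs of Theorems~\ref{theo:pgroups} and~\ref{theo:c}.
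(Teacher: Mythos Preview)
Your first paragraph, establishing $G = C_P(H) * O^p(G)$, is correct and is essentially Proposition~\ref{prop:nilpotencyclassDG}.

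Your second paragraph has a genuine gap. You assert that the property ``$\soc(ZF-)$ is an ideal'' descends to $O_p(G)$ and to $O^p(G)$, but no such descent to \emph{subgroups} is available in the paper at this point; only descent to \emph{quotients} is known (Corollary~\ref{cor:socquotient}). In fact, $\soc(ZFP) \trianglelefteq FP$ is proved only \emph{after} Theorem~\ref{theo:decompositionofp}, using it. Your route for $O^p(G)$ via Theorem~\ref{theo:c} would require $O^p(G)' = [P,H]$ to lie in $Z([P,H])$, i.e.\ $[P,H]$ abelian, which is not established --- Proposition~\ref{prop:nilpotencyclassDG} only gives that $G'$ has class at most two. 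The paper handles this step by a different mechanism: once the central product $G = C_P(H) * O^p(G)$ is in hand, Theorem~\ref{theo:centralproduct} immediately transfers the hypothesis to both factors.

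For the formula, the inclusion $(Z(P)G')^+ \cdot FG \subseteq \soc(ZFG)$ is already Corollary~\ref{cor:zpdginsoc}; no further work is needed. Your strategy for the reverse inclusion --- a tensor decomposition over the amalgamating central $p$-subgroup, fed by explicit socle formulas for the two factors --- is much more delicate than necessary, and your input ``$\soc(ZFO^p(G)) = [P,H]^+\, FO^p(G)$'' is not provided by Theorem~\ref{theo:c} (which gives a sufficient condition, not a formula). The paper's argument is a two-line squeeze: from the central product, $Z(C_P(H)) \subseteq O_p(Z(G))$, so Corollary~\ref{cor:zpdginsoc} gives $\soc(ZFG) \subseteq Z(C_P(H))^+ \cdot FG$; combining this with $\soc(ZFG) \subseteq (G')^+ \cdot FG$ from Lemma~\ref{lemma:socideal} yields
\[
\soc(ZFG) \subseteq (Z(C_P(H))\,G')^+ \cdot FG = (Z(P)\,G')^+ \cdot FG,
\]
the last equality because $Z(P) = Z(C_P(H))\,Z([P,H])$ with $Z([P,H]) \subseteq [P,H] \subseteq G'$. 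This bypasses all bookkeeping with the tensor product.
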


This statement will allow us to restrict our investigation to the case $P = G'$. A detailed analysis of the structure of finite groups $G$ for which $\soc(ZFG)$ is an ideal in $FG$, based on the above results, will be carried out in a sequel to this paper. 
\bigskip

We proceed as follows: First, we introduce our notation (see Section \ref{sec:notation}) and study the general structure of the finite groups $G$ for which $\soc(ZFG)$ or $R(FG)$ are ideals in $FG$ (see Section~\ref{sec:general}). In Section~\ref{sec:pgroups}, we classify the $p$-groups $G$ for which $\soc(ZFG)$ is an ideal in $FG$ for a field $F$ of characteristic $p > 0$. In Section \ref{sec:decomp}, we derive the decomposition of $G$ given in Theorem~\ref{theo:decompositionofp}.

\section{Notation}\label{sec:notation}
 
Let $G$ be a finite group and $p$ a prime number. As customary, let $G'$, $Z(G)$ and $\Phi(G)$ denote the derived subgroup, the center and the Frattini subgroup of $G$, respectively. For elements $a,b \in G$, we define their commutator as $[a,b] = aba^{-1}b^{-1}$. We write~$[g]$ for the conjugacy class of $g \in G$ and set $\Cl(G)$ to be the set of conjugacy classes of $G$. The nilpotency class of a nilpotent group $G$ will be denoted by $c(G)$. Recall that every $p$-group is nilpotent. For subsets $S$ and $T$ of $G$, let $C_T(S)$ and $N_T(S)$ denote the centralizer and the normalizer of $S$ in $T$, respectively. As customary, let $O_p(G)$, $O_{p'}(G)$ and $O_{p',p}(G)$ be the $p$-core, the $p'$-core and the $p',p$-core of $G$, respectively. By $O^p(G)$ and $O^{p'}(G)$, we denote the $p$-residual subgroup and the $p'$-residual subgroup of $G$, respectively. As customary, let $g_p$ and $g_{p'}$ be the $p$-part and the $p'$-part of an element $g \in G$, respectively. The $p'$-section of $g$ is given by all elements in $G$ whose $p'$-part is conjugate to~$g_{p'}$. We write $G = G_1 * G_2$ if $G$ is the central product of subgroups $G_1$ and $G_2$, that is, we have $G = \langle G_1, G_2 \rangle$ and $[G_1, G_2] = 1$.
\bigskip

For a field $F$ and a finite-dimensional $F$-algebra $A$, we denote by $J(A)$ and $\soc(A)$ its Jacobson radical and (left) socle, the sum of all minimal left ideals of $A$, respectively. Both $J(A)$ and $\soc(A)$ are ideals in $A$. In this paper, an ideal $I$ of $A$ is always meant to be a two-sided ideal, and we denote it by $I \trianglelefteq A$. Additionally, we study the Reynolds ideal $R(A) \coloneqq \soc(A) \cap Z(A)$ of $A$. Furthermore, let $K(A)$ denote the commutator space of $A$, that is, the $F$-subspace of $A$ spanned by all elements of the form $ab-ba$ with $a,b \in A$.
\bigskip 

In the following, we consider the group algebra $FG$ of $G$ over $F$. Recall that $FG$ is a symmetric algebra with symmetrizing linear form
\begin{equation}\label{eq:symmetrizinglinearform}
\lambda \colon FG \to F, \, \sum_{g \in G} a_g g \mapsto a_1.
\end{equation}
For subsets $S$ and $T$ of $FG$, we write $\lAnn_T(S)$ and $\rAnn_T(S)$ for the left and the right annihilator of $S$ in~$T$, respectively, and $\Ann_T(S)$ if both subspaces coincide. For $H \subseteq G$, we set $H^+ \coloneqq \sum_{h \in H} h \in FG$. It is well-known that the elements $C^+$ with $C \in \Cl(G)$ form an $F$-basis of the center $ZFG$ of $FG$. 
\bigskip

In this paper, we mainly study the Jacobson radical $J(ZFG)$ and the socle $\soc(ZFG)$ of the center of $FG$ as well as the Reynolds ideal $R(FG)$. All three spaces are ideals in $ZFG$, but not necessarily in $FG$. Note that $J(ZFG) = J(FG) \cap ZFG$ holds (see \cite[Theorem 1.10.8]{LIN18}) and that by \cite[Theorem 1.10.22]{LIN18}, we have $\soc(ZFG) = \Ann_{ZFG}(J(ZFG))$. Furthermore, observe that $J(ZFG)$, $\soc(ZFG)$ and $R(FG)$ are ideals in~$FG$ if and only if they are closed under multiplication with elements of $FG$ since they are additively closed. 
\bigskip

We recall the definition of the augmentation ideal
$$\omega(FG) = \left\{\sum_{g \in G} a_g g \in FG \colon \sum_{g \in G} a_g = 0\right\}.$$
An $F$-basis of $\omega(FG)$ is given by $\{1-g \colon 1 \neq g \in G\}$. If $F$ is a field of characteristic $p > 0$ and $G$ is a $p$-group, then $J(FG)$ and $\omega(FG)$ coincide (see \cite[Theorem 1.11.1]{LIN18}). For a normal subgroup $N$ of $G$, we consider the canonical projection 
$$\nu_N \colon FG \to F[G/N],\ \sum_{g \in G} a_g g \mapsto \sum_{g \in G} a_g \cdot gN.$$ Its kernel is given by $\omega(FN) \cdot FG = FG \cdot \omega(FN)$ (see \cite[Proposition 1.6.4]{LIN18}). 

\section{General properties}\label{sec:general}
Let $F$ be a field. In this part, we answer the question for which finite groups $G$ the Reynolds ideal $R(FG)$ is an ideal in $FG$. Moreover, we derive structural results on finite groups $G$ for which $\soc(ZFG)$ is an ideal in $FG$. In the next section, these will be applied in order to classify the finite groups of prime power order with this property.
\bigskip

Concerning the choice of the underlying field $F$, we note the following: 

\begin{Remark}
	$\null$
\begin{enumerate}[(i)]
\item Assume that $F$ is of characteristic zero or of positive characteristic not dividing $|G|$. By Maschke's theorem, the group algebra $FG$ is semisimple. In particular, $J(FG) = J(ZFG) = 0$ follows, which yields $R(FG) = \soc(ZFG) = ZFG$. Since $FG$ is unitary, $\soc(ZFG)$ is an ideal of $FG$ if and only if $ZFG = FG$ holds, that is, if and only if $G$ is abelian.
\item Let $F$ be a field of characteristic $p > 0$ and let $G$ be a finite group. Then $\soc(Z\F_p G)$ is an ideal in $\F_p G$ if and only if $\soc(ZFG)$ is an ideal in $FG$. A similar statement holds for the Reynolds ideal.
\end{enumerate}	
\end{Remark}

From now on until the end of this paper, we therefore assume that $F$ is an algebraically closed field of characteristic $p > 0$. 
\bigskip

This section is organized as follows: We first derive a criterion for $\soc(ZFG) \trianglelefteq FG$ (see Section \ref{sec:criterion}) and answer the question when the Reynolds ideal of $FG$ is an ideal in $FG$ (see Section \ref{sec:reynolds}). In Section~\ref{sec:blocks}, we investigate $p$-blocks of $FG$. Subsequently, we find a basis for $J(ZFG)$ (see Section \ref{sec:basisj}) and construct elements in $\soc(ZFG)$ arising from normal $p$-subgroups of $G$ (see Section \ref{sec:npc}). In Section~\ref{sec:casepabelian}, we study the case that $G'$ is contained in the center of a Sylow $p$-subgroup of $G$. We conclude this part by investigating the transition to quotient groups in Section~\ref{sec:quotientgroups} and studying central products in Section~\ref{sec:centralproducts}.

\subsection{\texorpdfstring{Criterion for $\soc(ZFG) \trianglelefteq FG$}{Criterion for soc(ZFG) trianglelefteq FG}}\label{sec:criterion}
Let $G$ be a finite group. In this section, we derive an equivalent criterion for $\soc(ZFG) \trianglelefteq FG$.

\begin{lemma}\label{lemma:fgcdotkfg}
We have $FG \cdot K(FG) = FG \cdot \omega(FG')$.
\end{lemma}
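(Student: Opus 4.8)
The plan is to prove the two inclusions separately, the workhorse being the identity $gh - hg = ([g,h] - 1)\,hg$ for $g, h \in G$, which is immediate from the definition $[g,h] = ghg^{-1}h^{-1}$.

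For the inclusion $FG \cdot K(FG) \subseteq FG \cdot \omega(FG')$, I would use that $\omega(FG') \cdot FG = FG \cdot \omega(FG')$ is the kernel of the canonical projection $\nu_{G'} \colon FG \to F[G/G']$, and in particular a two-sided ideal, since $G' \trianglelefteq G$. As $F[G/G']$ is commutative, $\nu_{G'}$ kills $K(FG)$, so $K(FG) \subseteq \ker \nu_{G'} = FG \cdot \omega(FG')$, and multiplying on the left by $FG$ gives the claim. (Concretely, $gh - hg = ([g,h]-1)\,hg$ with $[g,h] - 1 \in \omega(FG')$.)

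For the reverse inclusion I would first check that $FG \cdot K(FG)$ is itself a two-sided ideal. This follows from the identity $(gh - hg)\,k = k\,(gh-hg) + \bigl[(gh)k - k(gh)\bigr] - \bigl[(hg)k - k(hg)\bigr]$ for $g,h,k \in G$: the two bracketed terms lie in $K(FG)$, so $K(FG) \cdot FG \subseteq FG \cdot K(FG)$, whence $(FG \cdot K(FG)) \cdot FG \subseteq FG \cdot K(FG)$. Given this, the relation $gh - hg = ([g,h] - 1)\,hg \in K(FG)$ can be multiplied on the right by $(hg)^{-1}$ to yield $[g,h] - 1 \in FG \cdot K(FG)$ for all $g, h \in G$. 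Since $FG \cdot K(FG)$ is a left ideal and $1 - xy = (1 - x) + x(1 - y)$, an induction on the length of a word expressing an element of $G'$ as a product of commutators then shows $1 - x \in FG \cdot K(FG)$ for every $x \in G'$. As $\{1 - x \colon 1 \neq x \in G'\}$ is an $F$-basis of $\omega(FG')$, this gives $\omega(FG') \subseteq FG \cdot K(FG)$ and hence $FG \cdot \omega(FG') \subseteq FG \cdot K(FG)$.

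Combining both inclusions yields the equality. The only point requiring any care is the two-sidedness of $FG \cdot K(FG)$, which is precisely what allows the passage from $gh - hg$ to the bare ring element $[g,h] - 1$; everything else is routine bookkeeping.
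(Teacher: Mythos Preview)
Your proof is correct and follows essentially the same approach as the paper. For the reverse inclusion the paper phrases the argument via the quotient $FG/(K(FG)\cdot FG)$, observing that it is commutative and hence every $[a,b]$ maps to $1$, whereas you unpack this explicitly with the identity $gh-hg=([g,h]-1)hg$ and an induction on word length; your extra care in verifying that $FG\cdot K(FG)$ is two-sided is a point the paper leaves implicit.
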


\begin{proof}
As $FG/\omega(FG') \cdot FG$ is isomorphic to the commutative algebra $F[G/G']$, we have $K(FG) \subseteq \omega(FG') \cdot FG$ and hence $K(FG) \cdot FG \subseteq \omega(FG') \cdot FG$ follows. Now let $f \colon FG \to FG/K(FG) \cdot FG$ be the canonical projection map. For all $a,b \in G$, we have $f([a,b]) = f(a)f(b) f(a)^{-1} f(b)^{-1} = 1$ since $FG/K(FG) \cdot FG$ is a commutative algebra. For $g \in G'$, this yields $f(g) = 1$ and hence $f(g-1) = 0$. This shows $\omega(FG') \subseteq \Ker(f) = K(FG) \cdot FG$, which proves the claim.
\end{proof}

\begin{lemma}\label{lemma:socideal}
The socle $\soc(ZFG)$  is an ideal in $FG$ if and only if $\soc(ZFG) \subseteq (G')^+ \cdot FG$ holds.
\end{lemma}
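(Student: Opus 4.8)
The plan is to derive both directions from the auxiliary identity
\[
\{t \in FG : K(FG)\cdot t = 0\} = (G')^+ \cdot FG ,
\]
which I would establish first. For $t \in FG$, the condition $K(FG)\cdot t = 0$ is equivalent to $(FG\cdot K(FG))\cdot t = 0$ (multiply on the left by $FG$ for one direction, specialize the left factor to $1$ for the other), hence by Lemma~\ref{lemma:fgcdotkfg} to $(FG\cdot\omega(FG'))\cdot t = 0$, hence to $\omega(FG')\cdot t = 0$. Since $\{1-h : h \in G'\}$ spans $\omega(FG')$, this says $ht = t$ for all $h \in G'$; writing $t = \sum_{g\in G} a_g g$, it forces $a_g$ to be constant on the left cosets of $G'$, which is exactly the assertion that $t \in (G')^+\cdot FG$ (the reverse inclusion being immediate from $h(G')^+ = (G')^+$ for $h \in G'$). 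I would also record the standard fact that the elements $hgh^{-1} - g$ with $g,h \in G$ span $K(FG)$, since $ab - ba = hgh^{-1} - g$ on taking $h = a$ and $g = ba$.

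For the forward implication, assume $\soc(ZFG)\trianglelefteq FG$ and let $s \in \soc(ZFG)$ and $g \in G$. Then $gs \in \soc(ZFG)\subseteq ZFG$, so $gs$ is central; conjugating $gs$ by an arbitrary $h \in G$ and using that $s$ is central yields $(hgh^{-1})s = gs$, i.e.\ $(hgh^{-1}-g)s = 0$. Since these elements span $K(FG)$, we get $K(FG)\cdot s = 0$, and the identity above gives $s \in (G')^+\cdot FG$. Hence $\soc(ZFG)\subseteq (G')^+\cdot FG$.

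For the converse, assume $\soc(ZFG)\subseteq (G')^+\cdot FG$ and take $s \in \soc(ZFG)$ and $g \in G$. By the identity, $K(FG)\cdot s = 0$; since $s$ is central, $h(gs) - (gs)h = (hg-gh)s \in K(FG)\cdot s = 0$ for every $h \in G$, so $gs \in ZFG$. Moreover $J(ZFG)\subseteq ZFG$ is central in $FG$ and annihilates $s$ (as $s \in \soc(ZFG) = \Ann_{ZFG}(J(ZFG))$), so $J(ZFG)\cdot(gs) = g\cdot(J(ZFG)\cdot s) = 0$. Thus $gs$ lies in $ZFG$ and is killed by $J(ZFG)$, i.e.\ $gs \in \Ann_{ZFG}(J(ZFG)) = \soc(ZFG)$. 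By linearity $FG\cdot\soc(ZFG)\subseteq\soc(ZFG)$, and as $\soc(ZFG)\subseteq ZFG$ this is exactly the statement $\soc(ZFG)\trianglelefteq FG$.

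The real content is concentrated in the displayed identity, which is where Lemma~\ref{lemma:fgcdotkfg} is used; everything afterwards is formal manipulation of one-sided annihilators. The point requiring care is that for $s$ in the socle the element $gs$ is not central a priori — this is precisely what the containment $\soc(ZFG)\subseteq(G')^+\cdot FG$ provides — and once $gs$ is known to be central, its membership in $\soc(ZFG)$, namely being annihilated by $J(ZFG)$, is automatic because $J(ZFG)$ sits in the center of $FG$. I do not anticipate any obstacle beyond this bookkeeping.
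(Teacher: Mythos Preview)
Your proof is correct and follows essentially the same route as the paper: both reduce the ideal condition to $K(FG)\cdot\soc(ZFG)=0$ and then, via Lemma~\ref{lemma:fgcdotkfg}, identify the right annihilator of $K(FG)$ (equivalently of $\omega(FG')$) in $FG$ with $(G')^+\cdot FG$. The only difference is packaging: where the paper cites \cite[Lemma~2.1]{KUL20} for the equivalence $\soc(ZFG)\trianglelefteq FG \Leftrightarrow K(FG)\cdot\soc(ZFG)=0$ and \cite[Lemma~3.1.2]{PAS77} for $\rAnn_{FG}(\omega(FG'))=(G')^+\cdot FG$, you unfold both of these facts by hand, making your argument self-contained but otherwise identical in substance.
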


\begin{proof}
By \cite[Lemma 2.1]{KUL20}, we have $\soc(ZFG) \trianglelefteq FG$ if and only if $K(FG) \cdot \soc(ZFG) = 0$ holds, which is equivalent to $FG \cdot K(FG) \cdot \soc(ZFG) = 0$. By Lemma \ref{lemma:fgcdotkfg}, this is equivalent to $FG \cdot \omega(FG') \cdot \soc(ZFG) = 0$, that is, to $\soc(ZFG) \subseteq \rAnn_{FG}(\omega(FG')) = (G')^+ \cdot FG$ (see \cite[Lemma 3.1.2]{PAS77}).
\end{proof}

\subsection{Reynolds ideal}\label{sec:reynolds}
Let $G$ be a finite group. In this section, we answer the question when the Reynolds ideal $R(FG)$ is an ideal in $FG$. Our main result is the following:

\begin{theorem}\label{theo:reynoldsideal}
The following properties are equivalent:
	\begin{enumerate}[(i)]
		\item $R(FG)$ is an ideal of $FG$. 
		\item $G' \subseteq O_p(G).$
		\item $G = P \rtimes H$ with $P \in \Syl_p(G)$ and an abelian $p'$-group $H$. 
	\end{enumerate} 
	In this case, we have $R(FG) = O_p(G)^+ \cdot FG$. 
\end{theorem}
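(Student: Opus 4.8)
The plan is to prove the chain $(i) \Leftrightarrow (ii)$ and $(ii) \Leftrightarrow (iii)$, and then deduce the formula for $R(FG)$. The equivalence $(ii) \Leftrightarrow (iii)$ is the purely group-theoretic part: if $G' \subseteq O_p(G)$, then $G/O_p(G)$ is a $p'$-group, so by Schur--Zassenhaus $G = O_p(G) \rtimes \bar{H}$ with $\bar{H}$ a $p'$-group, and since $G/O_p(G) \cong \bar{H}$ is abelian (as $G' \subseteq O_p(G)$), $\bar{H}$ is an abelian $p'$-group; moreover $O_p(G)$ must then be a Sylow $p$-subgroup. Conversely, if $G = P \rtimes H$ with $P \in \Syl_p(G)$ and $H$ abelian of $p'$-order, then $P \trianglelefteq G$ forces $P = O_p(G)$, and $G/P \cong H$ abelian gives $G' \subseteq P = O_p(G)$.

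For $(i) \Rightarrow (ii)$, I would work with the symmetrizing form $\lambda$ from \eqref{eq:symmetrizinglinearform} and its associated bilinear form; recall that $R(FG) = \soc(ZFG) \cap Z(FG)$ is spanned by the class sums $C^+$ that lie in the socle, and by the general theory of symmetric algebras, $R(FG) = \lambda$-dual description via $K(FG)^\perp \cap ZFG$, or more concretely, $R(FG)$ is spanned by the $p'$-section sums (this is the classical description of the Reynolds ideal for group algebras, which I would either cite from the prequel \cite{BRE221} or reprove). Assuming $R(FG) \trianglelefteq FG$: since $1^+ = 1 \in$ the identity component and the block idempotents lie in $R(FG)$ only when... — more directly, I would use that $R(FG) \trianglelefteq FG$ together with Lemma \ref{lemma:socideal}-type reasoning, or argue that $R(FG) \trianglelefteq FG$ implies $R(FG) \subseteq (G')^+ \cdot FG$ by the same computation (replacing $\soc(ZFG)$ by $R(FG)$ in the proof of Lemma \ref{lemma:socideal}, which goes through verbatim since only additive closure and $K(FG) \cdot R(FG) = 0$ are used — and $K(FG) \cdot R(FG) = 0$ holds because $R(FG) \subseteq Z(FG)$ already, wait, that gives it for free, so I need the converse direction of that lemma). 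The cleanest route: $R(FG) \trianglelefteq FG$ $\iff$ $FG \cdot R(FG) \subseteq R(FG)$, and since $\dim R(FG) = \#\{p'\text{-sections}\} = \#\{p\text{-regular classes}\}$ while $FG \cdot R(FG)$ is a two-sided ideal, I would compare dimensions or identify $FG \cdot R(FG)$ with $O_p(G)^+ \cdot FG$ directly and then show the containment $R(FG) \subseteq O_p(G)^+ \cdot FG$ forces $G/O_p(G)$ to have all its $p'$-sections singletons on the relevant part, i.e. forces $G' \subseteq O_p(G)$.

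For $(iii) \Rightarrow (i)$ and the final formula, assume $G = P \rtimes H$ with $P = O_p(G) \in \Syl_p(G)$ and $H$ abelian $p'$. Then every $p$-regular element is $G$-conjugate into $H$, the $p'$-sections are indexed by the $H$-conjugacy classes (which are singletons), and I expect $R(FG)$ to be spanned by the elements $(Py)^+ = P^+ y$ for $y \in H$, equivalently $R(FG) = P^+ \cdot FH = P^+ \cdot FG = O_p(G)^+ \cdot FG$; this is visibly a two-sided ideal since $P^+$ is central (as $P \trianglelefteq G$) and right multiplication by $FG = FH \cdot FP$ preserves $P^+ \cdot FG$. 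The main obstacle I anticipate is pinning down the exact description of $R(FG)$ as a span of section sums and verifying it equals $O_p(G)^+ \cdot FG$ under hypothesis $(iii)$ — in particular checking that $O_p(G)^+ \cdot FG$ is contained in the socle, which should follow because $O_p(G)^+$ kills $\omega(FO_p(G))$ and the quotient $FG/\omega(FO_p(G))\cdot FG \cong F[G/O_p(G)] \cong FH$ is semisimple, so $O_p(G)^+ \cdot FG$ is a semisimple right module, hence lies in $\soc(FG)$, and it lands in $ZFG$ because $O_p(G)^+$ is central and $FH$ is commutative. Reconciling this socle-of-$FG$ statement with membership in $\soc(ZFG) \cap ZFG = R(FG)$ is the last routine-but-careful step.
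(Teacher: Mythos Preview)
Your treatments of (ii) $\Leftrightarrow$ (iii) and of (iii) $\Rightarrow$ (i) together with the formula $R(FG) = O_p(G)^+ \cdot FG$ are correct and match the paper: once $G = P \rtimes H$ with $P = O_p(G)$ and $H$ an abelian $p'$-group, the paper uses $J(FG) = \omega(FP)\cdot FG$ to get $\soc(FG) = P^+ \cdot FG \subseteq (G')^+ \cdot FG \subseteq ZFG$, hence $R(FG) = \soc(FG) \cap ZFG = P^+ \cdot FG$, which is visibly an ideal.

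The genuine gap is (i) $\Rightarrow$ (ii): none of your three sketches is carried to a conclusion, and the moment of confusion is the crux. The condition $K(FG)\cdot R(FG) = 0$ is \emph{not} automatic from $R(FG) \subseteq ZFG$; for $r$ central and $a,b \in FG$ one has $(ab-ba)r = a(br) - b(ar)$, which need not vanish. By \cite[Lemma~2.1]{KUL20} this vanishing is precisely equivalent to $R(FG) \trianglelefteq FG$, so it is the hypothesis to be exploited, not a tautology. The paper's argument is different from all of your attempts: it cites \cite[Lemma~2.2]{BRE221} to deduce that $FG$ is \emph{basic}, hence (as $F$ is algebraically closed) $FG/J(FG)$ is commutative; then $K(FG) \subseteq J(FG)$, so Lemma~\ref{lemma:fgcdotkfg} gives $\omega(FG')\cdot FG \subseteq J(FG)$, so $g-1$ is nilpotent for each $g \in G'$, whence $g^{p^n}=1$ and $G'$ is a $p$-group. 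Your first instinct can in fact be rescued along different lines: accepting $K(FG)\cdot R(FG)=0$ as hypothesis, the proof of Lemma~\ref{lemma:socideal} yields $R(FG) \subseteq (G')^+\cdot FG$; applying this to the $p'$-section sum of $1$ (the sum of all $p$-elements of $G$) forces the set of $p$-elements to be a union of $G'$-cosets, so $G'$ consists of $p$-elements and is a $p$-group by Cauchy. Your other two suggestions (dimension comparison, identifying $FG\cdot R(FG)$ with $O_p(G)^+\cdot FG$) are circular as written, since they presuppose knowledge of $O_p(G)$.
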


\begin{proof}
Suppose that $R(FG)$ is an ideal in $FG$. Then $FG$ is a basic $F$-algebra by \cite[Lemma 2.2]{BRE221}. Since $F$ is algebraically closed, this implies that $FG/J(FG)$ is commutative. By Lemma~\ref{lemma:fgcdotkfg}, we have $\omega(FG') \cdot FG = K(FG)\cdot FG \subseteq J(FG)$. Thus, for $g \in G'$, the element $g-1$ is nilpotent. Hence there exists $n \in \N$ with $0 = (g-1)^{p^n} = g^{p^n} -1$. This shows that $G'$ is a $p$-group and hence contained in $O_p(G)$.
\bigskip

Now assume $G' \subseteq O_p(G)$ and let $P \in \Syl_p(G)$. Then $G' \subseteq P$ follows, so $P$ is a normal subgroup of $G$ and $G/P$ is abelian. By the Schur-Zassenhaus theorem, $P$ has a complement $H$ in $G$.
Moreover, $H$ is isomorphic to $G/P$ and thus abelian.
\bigskip

Finally suppose that $G = P \rtimes H$ holds, where $P \in \Syl_p(G)$ and $H$ is an abelian $p'$-group. In particular, we have $P = O_p(G)$. We obtain $J(FG) = \omega(FP) \cdot FG$ and $\soc(FG) = \Ann_{FG}(J(FG)) = P^+ \cdot FG
\subseteq (G')^+ \cdot FG \subseteq ZFG$, so that $R(FG) = P^+ \cdot FG$ is an ideal in $FG$.
\end{proof}

This proves Theorem~\ref{theo:a}. Moreover, we obtain the following necessary condition for $\soc(ZFG) \trianglelefteq FG$:

\begin{Corollary}\label{cor:gph}
If $\soc(ZFG)$ is an ideal of $FG$, we have $G = P \rtimes H$ with $P \in \Syl_p(G)$ and an abelian $p'$-group $H$.
\end{Corollary}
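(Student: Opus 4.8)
The plan is to deduce Corollary~\ref{cor:gph} directly from Theorem~\ref{theo:reynoldsideal}, which has just been established. The key observation is that the Reynolds ideal $R(FG) = \soc(FG) \cap ZFG$ is, by construction, contained in the socle of the center: indeed $R(FG) \subseteq \soc(ZFG)$ since any element of $\soc(FG)$ that lies in $ZFG$ must in particular generate a left ideal that is semisimple as a $ZFG$-module, hence lies in $\soc(ZFG)$. (More precisely, one uses the standard fact, recalled in Section~\ref{sec:notation}, that $\soc(ZFG) = \Ann_{ZFG}(J(ZFG))$ together with $J(ZFG) = J(FG) \cap ZFG \subseteq J(FG)$, so $R(FG) = \soc(FG) \cap ZFG$ annihilates $J(FG) \cap ZFG = J(ZFG)$ and therefore sits inside $\soc(ZFG)$.)

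First I would argue that if $\soc(ZFG)$ is an ideal in $FG$, then so is $R(FG)$. For this it suffices to show $R(FG) = \soc(ZFG) \cap Z(A)$-type reasoning does not quite suffice on its own; instead I would note that $R(FG) = \soc(ZFG) \cap \soc(FG)$, and that if $\soc(ZFG)$ is an ideal then $\soc(ZFG) \subseteq \soc(FG)$. The latter inclusion follows because an ideal of $FG$ contained in $ZFG$ is in particular a left ideal of $FG$ on which $J(FG)$ acts trivially: by Lemma~\ref{lemma:socideal}, $\soc(ZFG) \subseteq (G')^+ \cdot FG$, and one checks $(G')^+ \cdot FG \subseteq \soc(FG)$ by a short computation. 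Actually, a cleaner route: if $\soc(ZFG) \trianglelefteq FG$ then $J(FG) \cdot \soc(ZFG) = J(ZFG)\cdot \soc(ZFG) + (\text{stuff}) $, and since $\soc(ZFG) = \Ann_{ZFG}(J(ZFG))$, the cleanest statement is that a two-sided ideal of $FG$ lying inside $ZFG$ and annihilated by $J(ZFG)$ is automatically annihilated by $J(FG)$, hence lies in $\soc(FG)$; this uses that $J(FG)$ acts on any such ideal through a quotient factoring over $ZFG$. Having $\soc(ZFG) \subseteq \soc(FG)$, we get $R(FG) = \soc(ZFG) \cap Z(FG) = \soc(ZFG)$ is an ideal, hence $R(FG) \trianglelefteq FG$.

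Then I would invoke the equivalence (i) $\Leftrightarrow$ (iii) of Theorem~\ref{theo:reynoldsideal}: since $R(FG)$ is an ideal in $FG$, we obtain $G = P \rtimes H$ with $P \in \Syl_p(G)$ and $H$ an abelian $p'$-group, which is exactly the assertion of the corollary.

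The main obstacle I anticipate is the step asserting that $\soc(ZFG) \trianglelefteq FG$ forces $R(FG) \trianglelefteq FG$ — concretely, showing $R(FG) = \soc(ZFG)$ under this hypothesis, equivalently $\soc(ZFG) \subseteq \soc(FG)$. The cleanest argument for this uses Lemma~\ref{lemma:socideal}: the hypothesis gives $\soc(ZFG) \subseteq (G')^+ \cdot FG$, and since $(G')^+ \cdot \omega(FG') = 0$ one sees $(G')^+ \cdot FG$ is killed on the left by $\omega(FG') \cdot FG = K(FG) \cdot FG$; combined with commutativity considerations this places $\soc(ZFG)$ inside $\soc(FG)$. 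If a self-contained verification of $\soc(ZFG)\subseteq\soc(FG)$ proves delicate, an alternative is to note that $\soc(ZFG)\trianglelefteq FG$ implies $FG\cdot K(FG)\cdot\soc(ZFG)=0$ (as in the proof of Lemma~\ref{lemma:socideal}), hence in particular $K(FG)\cdot\soc(ZFG)=0$; since $J(FG) = J(ZFG)\oplus(\text{contribution of }K(FG))$ in the appropriate sense and $J(ZFG)$ already annihilates $\soc(ZFG)$, we conclude $J(FG)\cdot\soc(ZFG)=0$, i.e. $\soc(ZFG)\subseteq\soc(FG)$, and therefore $R(FG)=\soc(ZFG)$ is an ideal of $FG$. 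Applying Theorem~\ref{theo:reynoldsideal} finishes the proof.
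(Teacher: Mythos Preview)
Your overall strategy matches the paper's: deduce $R(FG)\trianglelefteq FG$ from the hypothesis and then invoke Theorem~\ref{theo:reynoldsideal}. The paper does exactly this, citing \cite[Lemma~1.3]{BRE221} for the implication $\soc(ZFG)\trianglelefteq FG\Rightarrow R(FG)\trianglelefteq FG$.

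The problem is in your justification of that implication. You repeatedly aim for the stronger assertion $\soc(ZFG)\subseteq\soc(FG)$ (equivalently $R(FG)=\soc(ZFG)$), and this is \emph{false} in general, even under the hypothesis $\soc(ZFG)\trianglelefteq FG$. Take $G$ extraspecial of order $p^3$: then $G'=Z(G)$ has order $p$, the hypothesis holds (nilpotency class two, Theorem~\ref{theo:pgroups}), $\soc(FG)=F\cdot G^+$ is $1$-dimensional, but $\soc(ZFG)=Z(G)^+\cdot FG$ is $p^2$-dimensional. So neither the claim $(G')^+\cdot FG\subseteq\soc(FG)$ nor the decomposition ``$J(FG)=J(ZFG)\oplus(\text{contribution of }K(FG))$'' can be salvaged; the latter is simply not a true description of $J(FG)$.

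The correct (and much shorter) argument is the one behind \cite[Lemma~1.3]{BRE221}: you already observed $R(FG)\subseteq\soc(ZFG)$. Now use that $\soc(FG)$ is \emph{always} an ideal of $FG$. For $r\in R(FG)$ and $a\in FG$ one has $ar\in\soc(FG)$ (because $r\in\soc(FG)$ and $\soc(FG)\trianglelefteq FG$) and $ar\in\soc(ZFG)\subseteq ZFG$ (because $r\in\soc(ZFG)$ and $\soc(ZFG)\trianglelefteq FG$ by hypothesis). Hence $ar\in\soc(FG)\cap ZFG=R(FG)$, so $R(FG)\trianglelefteq FG$, and Theorem~\ref{theo:reynoldsideal} finishes. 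No need to compare $\soc(ZFG)$ with $\soc(FG)$ globally.
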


\begin{proof}
By \cite[Lemma 1.3]{BRE221}, $\soc(ZFG) \trianglelefteq FG$ implies $R(FG) \trianglelefteq FG$. With this, the claim follows from Theorem~\ref{theo:reynoldsideal}.
\end{proof}

\begin{Remark}\label{rem:trivianeu}\label{rem:structureCG(P)}
Let $G = P \rtimes H$ with $P \in \Syl_p(G)$ and an abelian $p'$-group $H$.
\begin{enumerate}[(i)]
\item By \cite[Theorem 5.3.5]{GOR68}, we have $P = C_P(H) [P,H]$. Due to $[P,H] \subseteq G'$, this yields $G = HP = H C_P(H) [P,H] = H C_P(H) G'$. Note that $[G,H] = [P,H] = [[P,H],H] = [G',H] = [[G',H],H]$ holds by \cite[Theorem 5.3.6]{GOR68} and that this is a normal subgroup of $PH = G$. 
\item We have $O^p(G) = N$ for $N \coloneqq H [G,H]$: Clearly, $N$ is a normal subgroup of $G$. Since $G/N$ is a $p$-group, we have $O^p(G) \subseteq N$. 
On the other hand, $G/O^p(G)$ is a $p$-group, which implies $H \subseteq O^p(G)$ and hence $N \subseteq O^p(G)$ as $O^p(G)$ is a normal subgroup of $G$. In particular, this implies $O^p(G)' \subseteq [G,H]$. On the other hand, we have $[G,H] = [[G',H],H] \subseteq [O^p(G),O^p(G)] = O^p(G)'$ by (i) and hence $O^p(G)' = [G,H] \in \Syl_p(O^p(G))$ follows.
\item Since $O_{p'}(G)$ is contained in the abelian group $H$ and $[P,O_{p'}(G)] \subseteq P \cap O_{p'}(G) = 1$ holds, we have
$O_{p'}(G) \subseteq Z(G)$. Hence \cite[Theorem 6.3.3]{GOR68} implies $C_G(P) \subseteq O_{p'p}(G) = O_{p'}(G) \times P$, and
we conclude that $C_G(P) = O_{p'}(G) \times Z(P)$ holds.
\item Since $R(FG)$ is spanned by the $p'$-section sums of $G$ (see \cite[Equation (39)]{KUL91}), every $p'$-section is of the form $hP$ for some $h \in H$. 
\end{enumerate}
\end{Remark}

\subsection{\texorpdfstring{Blocks and the $p'$-core}{Blocks and the p'-core}}\label{sec:blocks}
Let $G$ be an arbitrary finite group. In this section, we investigate the conditions $\soc(Z(B)) \trianglelefteq B$ and $R(B) \trianglelefteq B$ for a $p$-block $B$ of $FG$. 

\begin{Remark}\label{rem:blocks}
Let $FG = B_1 \oplus \ldots \oplus B_n$ be the decomposition of $FG$ into its $p$-blocks. Then we have $$\soc(ZFG) = \soc(Z(B_1)) \oplus \ldots \oplus \soc(Z(B_n)).$$
In particular, $\soc(ZFG)$ is an ideal in $FG$ if and only if $\soc(Z(B_i)) \trianglelefteq B_i$ holds for all $i \in \{1, \ldots, n\}$, and the analogous statement is true for the Reynolds ideal. Furthermore, it is known that the principal blocks of $FG$ and $F\bar{G}$ are isomorphic for $\bar{G} \coloneqq G/O_{p'}(G)$.
\end{Remark}

For the Reynolds ideal, we obtain the following result: 

\begin{lemma}\label{lemma:reynoldsblock}
The following are equivalent: 
\begin{enumerate}[(i)]
\item There exists a block $B$ of $FG$ for which $R(B) \trianglelefteq B$ holds. 
\item For the principal block $B_0$ of $FG$, we have $R(B_0) \trianglelefteq B_0$. 
\item $G'$ is contained in $O_{p'p}(G)$. 
\end{enumerate}
\end{lemma}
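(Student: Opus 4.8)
The plan is to reduce the statement to facts about $p$-blocks together with Theorem~\ref{theo:reynoldsideal} applied to a suitable quotient group. The key observation is that $R(B) \trianglelefteq B$ for a block $B$ of $FG$ is governed by the block algebra structure, and the principal block $B_0$ of $FG$ is isomorphic to the principal block of $F\bar G$ with $\bar G \coloneqq G/O_{p'}(G)$ (see Remark~\ref{rem:blocks}). So I would first establish the equivalence of (ii) and (iii) by passing to $\bar G$, where $O_{p'}(\bar G) = 1$, and then deduce (i)$\Leftrightarrow$(ii) by a block-theoretic argument showing that the Reynolds ideal of \emph{some} block being an ideal forces the same for the principal block.

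First I would treat (ii)$\Leftrightarrow$(iii). The Reynolds ideal $R(B_0)$ is an ideal of $B_0$ if and only if the corresponding property holds for the principal block of $F\bar G$; and by a standard result the principal block of $F\bar G$ is all of $F\bar G$ precisely when $O_{p'}(\bar G) = 1$ forces... more carefully, I would instead argue directly: if $G' \subseteq O_{p'p}(G)$, then $\bar G = G/O_{p'}(G)$ satisfies $\bar{G}' \subseteq O_p(\bar G)$, so by Theorem~\ref{theo:reynoldsideal} the Reynolds ideal of $F\bar G$ is an ideal of $F\bar G$; in particular this holds for the principal block of $F\bar G$, hence for $B_0$. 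Conversely, if $R(B_0) \trianglelefteq B_0$, then since $B_0 \cong$ (principal block of $F\bar G$), \cite[Lemma 2.2]{BRE221} shows $B_0$ is basic, hence $B_0/J(B_0)$ is commutative (as $F$ is algebraically closed). Running the nilpotency argument from the proof of Theorem~\ref{theo:reynoldsideal} inside $B_0$: the image of $\omega(F\bar G')$ in $B_0$ lies in $J(B_0)$, so for $g \in \bar G'$ the image of $g - 1$ in $B_0$ is nilpotent. Because the principal block homomorphism is injective on the group elements lying in the image of $\bar G$ — or more robustly, because $\bar G' \subseteq O_p(\bar G)$ can be read off from the principal block via the isomorphism $\bar G/O_p(\bar G)$ acting faithfully on $B_0/J(B_0)$ — one concludes $\bar G'$ is a $p$-group, i.e.\ $G' O_{p'}(G)/O_{p'}(G) \subseteq O_p(G/O_{p'}(G)) = O_{p'p}(G)/O_{p'}(G)$, which gives (iii).

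For (i)$\Rightarrow$(ii), I would use that $R(B) \trianglelefteq B$ implies $B$ is basic by \cite[Lemma 2.2]{BRE221}, so $B/J(B)$ is commutative. The key point is that commutativity of $B/J(B)$ for \emph{any} block $B$ with defect group $D$ and an associated quotient forces control of $G'$: concretely, a block $B$ with a nonabelian inertial quotient or noncyclic structure on $B/J(B)$ cannot have $B/J(B)$ commutative. I expect the cleanest route is to show that $R(B) \trianglelefteq B$ for some block $B$ already forces $G' \subseteq O_{p'p}(G)$ directly — for instance by noting that $B/J(B)$ is a direct product of matrix algebras over $F$, commutative iff all are $1 \times 1$, iff $B$ has a unique irreducible character over $F$ up to... and then invoking that this happens for some block iff the $p'$-part of $G/O_{p'p}(G)$ is trivial, i.e.\ $G/O_{p'p}(G)$ is a $p$-group, which is exactly (iii) since $G/O_{p'p}(G)$ is always $p$-solvable with trivial $O_p$, so being a $p$-group means being trivial, equivalently $G' \subseteq O_{p'p}(G)$... this last chain needs care. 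The direction (iii)$\Rightarrow$(i) is then immediate from (iii)$\Rightarrow$(ii).

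The main obstacle I anticipate is the implication (i)$\Rightarrow$(iii): extracting a global group-theoretic conclusion from a hypothesis on a single, possibly non-principal block. The principal-block case works because the principal block "sees" the whole group mod $O_{p'}(G)$, but a non-principal block only sees information localized at its defect group and inertial quotient. I would resolve this by the structure theory of blocks: if $R(B)\trianglelefteq B$ then $B$ is a basic symmetric algebra with $B/J(B)$ commutative, which forces $B$ to be a \emph{nilpotent} block with abelian defect group (or at least forces the inertial quotient to be trivial and the defect group... abelian), and a theorem of Broué--Puig / Külshammer on nilpotent blocks then pins down enough of the structure; alternatively, and more in the spirit of this paper, I would reduce to the principal block via the fact that $R(B)\trianglelefteq B$ for one block propagates — using \cite[Lemma 1.3]{BRE221}-type arguments or the Fong--Reynolds correspondence — to the principal block of a subgroup, and close the loop there.
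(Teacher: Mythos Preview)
Your direction (iii)$\Rightarrow$(ii) is correct and coincides with the paper's argument. The other two implications have genuine gaps.

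For (ii)$\Rightarrow$(iii): from $R(B_0)\trianglelefteq B_0$ you correctly deduce that $B_0$ is basic, hence $B_0/J(B_0)$ is commutative and every simple $B_0$-module is one-dimensional. But then you try to run the nilpotency argument of Theorem~\ref{theo:reynoldsideal} ``inside $B_0$'': knowing that the image of $g-1$ in $B_0$ is nilpotent for $g\in G'$ does \emph{not} tell you that $g-1$ is nilpotent in $FG$ (or in $F\bar G$), because projecting to $B_0$ is not injective on group elements. Your fallback sentence about $\bar G/O_p(\bar G)$ ``acting faithfully on $B_0/J(B_0)$'' is exactly the missing ingredient, but it needs to be made precise. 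The paper does this in one line via Brauer's theorem \cite[Theorem~2]{BRA64}: the intersection of the kernels of the simple $B_0$-modules is $O_{p'p}(G)$, so if all of them are one-dimensional then $G'$ lies in this intersection.

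For (i)$\Rightarrow$(ii): you correctly flag this as the main obstacle, but none of your proposed routes is carried out, and at least one is wrong as stated (the claim that $G/O_{p'p}(G)$ is ``always $p$-solvable with trivial $O_p$'' fails in general; take $G$ simple of order divisible by $p$). The speculation about nilpotent blocks or Fong--Reynolds is not brought to a conclusion. The paper's argument is short and uses a result you did not locate: \cite[Proposition~4.1]{KUL20} shows directly that if $R(B)\trianglelefteq B$ for some block $B$, then $B\cong B_0$, which immediately gives (ii).
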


\begin{proof}
Assume that (i) holds. By \cite[Proposition 4.1]{KUL20}, this implies $B \cong B_0$ and hence (ii) holds. Now assume that (ii) holds. 
By \cite[Remarks 2.2 and 3.1]{KUL20}, every simple $B_0$-module is one-dimensional. Since the intersection of the kernels of the simple $B_0$-modules is given by $O_{p'p}(G)$ (see \cite[Theorem 2]{BRA64}), we obtain $G' \subseteq O_{p'p}(G)$. Finally, assume that (iii) holds. Then we have $\bar{G}' \subseteq O_p(\bar{G})$. Theorem~\ref{theo:reynoldsideal} yields $R(F\bar{G}) \trianglelefteq F\bar{G}$, which implies $R(\bar{B}_0) \trianglelefteq \bar{B}_0$ by Remark~\ref{rem:blocks}. Since $B_0$ and $\bar{B}_0$ are isomorphic, we obtain $R(B_0) \trianglelefteq B_0$. 
\end{proof}

Concerning the analogous problem for the socle of the center, we first observe the following:

\begin{lemma}\label{lemma:socblocks}
The following are equivalent: 
\begin{enumerate}[(i)]
	\item There exists a block $B$ of $FG$ for which $\soc(Z(B)) \trianglelefteq B$ holds. 
	\item For the principal block $B_0$ of $FG$, we have $\soc(Z(B_0)) \trianglelefteq B_0$. 
	\item For the principal block $\bar{B}_0$ of $F\bar{G}$, we have $\soc(Z(\bar{B}_0)) \trianglelefteq \bar{B}_0$.
\end{enumerate}
\end{lemma}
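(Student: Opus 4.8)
The plan is to prove the chain of equivalences in Lemma~\ref{lemma:socblocks} by a cyclic argument, closely mirroring the proof of Lemma~\ref{lemma:reynoldsblock} but using the socle in place of the Reynolds ideal. The implication (ii)~$\Rightarrow$~(i) is trivial, so the substance lies in (i)~$\Rightarrow$~(ii), in (ii)~$\Leftrightarrow$~(iii), and in linking these through the $p'$-core.

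First I would establish (i)~$\Rightarrow$~(ii). Suppose $\soc(Z(B)) \trianglelefteq B$ for some block $B$ of $FG$. By Lemma~\ref{lemma:reynoldsblock} (or rather the cited \cite[Lemma 1.3]{BRE221} applied blockwise, since $\soc(Z(B)) \trianglelefteq B$ forces $R(B) = \soc(Z(B)) \cap Z(B) = \soc(Z(B))$ when... ) — more carefully: since $\soc(Z(B)) \trianglelefteq B$ implies $R(B) \trianglelefteq B$ by the blockwise analogue of \cite[Lemma 1.3]{BRE221}, \cite[Proposition 4.1]{KUL20} gives $B \cong B_0$, the principal block. Hence $\soc(Z(B_0)) \trianglelefteq B_0$, which is (ii).

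Next I would prove (ii)~$\Leftrightarrow$~(iii). The key tool is that the principal blocks $B_0$ of $FG$ and $\bar B_0$ of $F\bar G$ with $\bar G = G/O_{p'}(G)$ are isomorphic as $F$-algebras (Remark~\ref{rem:blocks}). An algebra isomorphism carries the center to the center and the socle to the socle, hence carries $\soc(Z(B_0))$ to $\soc(Z(\bar B_0))$; and it carries two-sided ideals to two-sided ideals. Therefore $\soc(Z(B_0)) \trianglelefteq B_0$ holds if and only if $\soc(Z(\bar B_0)) \trianglelefteq \bar B_0$ holds, which is exactly (ii)~$\Leftrightarrow$~(iii). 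This step is essentially formal once the isomorphism $B_0 \cong \bar B_0$ is invoked.

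I expect the only genuine subtlety is the very first implication: one must be sure that $\soc(Z(B)) \trianglelefteq B$ really does force $B$ to be (isomorphic to) the principal block, and the cleanest route is to note that $\soc(Z(B)) \trianglelefteq B$ implies $R(B) \trianglelefteq B$ — using that $R(B) = \soc(B) \cap Z(B) \supseteq \soc(Z(B))$ together with the blockwise version of \cite[Lemma 1.3]{BRE221} — and then to apply \cite[Proposition 4.1]{KUL20}, exactly as in the proof of Lemma~\ref{lemma:reynoldsblock}. After that, everything reduces to transporting structure along the isomorphism $B_0 \cong \bar B_0$. Thus the proof will be short: (ii)~$\Rightarrow$~(i) is immediate, (i)~$\Rightarrow$~(ii) copies the block-to-principal-block reduction of Lemma~\ref{lemma:reynoldsblock}, and (ii)~$\Leftrightarrow$~(iii) is the isomorphism $B_0 \cong \bar B_0$.
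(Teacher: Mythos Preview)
Your proposal is correct and matches the paper's proof essentially verbatim: the paper proves (i)~$\Leftrightarrow$~(ii) by invoking \cite[Proposition~4.1]{KUL20} exactly as in Lemma~\ref{lemma:reynoldsblock}, and (ii)~$\Leftrightarrow$~(iii) by the isomorphism $B_0 \cong \bar{B}_0$ from Remark~\ref{rem:blocks}. Your extra care in noting that $\soc(Z(B)) \trianglelefteq B$ forces $R(B) \trianglelefteq B$ (so that \cite[Proposition~4.1]{KUL20} applies) is a reasonable unpacking of what the paper leaves implicit in the phrase ``as in the proof of Lemma~\ref{lemma:reynoldsblock}.''
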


\begin{proof}
As in the proof of Lemma~\ref{lemma:reynoldsblock}, the equivalence of (i) and (ii) follows by \cite[Proposition 4.1]{KUL20} and the equivalence of (ii) and (iii) follows from the fact that $B_0$ and $\bar{B}_0$ are isomorphic.
\end{proof}


This has the following important consequence:

\begin{lemma}\label{lemma:opstrich}
We have $\soc(ZFG) \trianglelefteq FG$ if and only if $R(FG) \trianglelefteq FG$ and $\soc(ZF\bar{G}) \trianglelefteq F\bar{G}$ hold. 
\end{lemma}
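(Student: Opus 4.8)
The plan is to reduce both sides of the stated equivalence to a single condition on the principal block, using the fact that the hypothesis $R(FG) \trianglelefteq FG$ — which is available in both directions — forces every block of $FG$ to be isomorphic to the principal block. Concretely: in the forward direction $\soc(ZFG) \trianglelefteq FG$ implies $R(FG) \trianglelefteq FG$ by \cite[Lemma 1.3]{BRE221}, and in the backward direction $R(FG) \trianglelefteq FG$ is assumed outright; so in either case I may work under this assumption. Write $FG = B_1 \oplus \dots \oplus B_n$ for the block decomposition, with $B_0$ the principal block.

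The key step is the following. Assuming $R(FG) \trianglelefteq FG$, the Reynolds-ideal analogue of Remark \ref{rem:blocks} gives $R(B_i) \trianglelefteq B_i$ for every $i$, whence \cite[Proposition 4.1]{KUL20} (the same ingredient used in the proof of Lemma \ref{lemma:reynoldsblock}) yields $B_i \cong B_0$ for all $i$. Consequently, by Remark \ref{rem:blocks}, $\soc(ZFG) \trianglelefteq FG$ if and only if $\soc(Z(B_0)) \trianglelefteq B_0$. I would then run the parallel argument for $\bar G = G/O_{p'}(G)$: since $R(FG) \trianglelefteq FG$ gives $G' \subseteq O_p(G)$ by Theorem \ref{theo:reynoldsideal}, the subgroup $\bar G' = G' O_{p'}(G)/O_{p'}(G)$ is a (quotient of a $p$-group, hence) normal $p$-subgroup of $\bar G$, so $\bar G' \subseteq O_p(\bar G)$ and therefore $R(F\bar G) \trianglelefteq F\bar G$, again by Theorem \ref{theo:reynoldsideal} applied to the group algebra $F\bar G$ over the same algebraically closed field of characteristic $p$. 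The same reasoning as above then shows that every block of $F\bar G$ is isomorphic to its principal block $\bar B_0$, and that $\soc(ZF\bar G) \trianglelefteq F\bar G$ if and only if $\soc(Z(\bar B_0)) \trianglelefteq \bar B_0$.

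To finish, recall from Remark \ref{rem:blocks} that $B_0 \cong \bar B_0$, so the two conditions $\soc(Z(B_0)) \trianglelefteq B_0$ and $\soc(Z(\bar B_0)) \trianglelefteq \bar B_0$ are equivalent (this is exactly the step already isolated in Lemma \ref{lemma:socblocks}). Combining this with the two "if and only if" statements of the previous paragraph gives, under the standing assumption $R(FG) \trianglelefteq FG$, the equivalence $\soc(ZFG) \trianglelefteq FG \Leftrightarrow \soc(ZF\bar G) \trianglelefteq F\bar G$. The backward implication of the lemma is then immediate, and the forward implication follows by additionally invoking $\soc(ZFG)\trianglelefteq FG \Rightarrow R(FG)\trianglelefteq FG$.

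I expect essentially no serious obstacle: the genuine content is all borrowed — \cite[Proposition 4.1]{KUL20} (a block carrying a Reynolds ideal that is an ideal is isomorphic to the principal block) and the isomorphism $B_0 \cong \bar B_0$ — after which the argument is bookkeeping with Remark \ref{rem:blocks} and Theorem \ref{theo:reynoldsideal}. The one point that needs a moment's care is verifying $R(F\bar G) \trianglelefteq F\bar G$ so that the block-reduction applies to $\bar G$ as well as to $G$; this is why the proof routes through $G' \subseteq O_p(G)$ and Theorem \ref{theo:reynoldsideal} rather than attempting to compare the blocks of $FG$ and $F\bar G$ directly (which would be delicate, since $O_{p'}(G)$ need not be a direct factor of a $p'$-complement).
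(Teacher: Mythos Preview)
Your proof is correct and uses the same ingredients as the paper (Remark~\ref{rem:blocks}, Lemma~\ref{lemma:socblocks}, \cite[Proposition 4.1]{KUL20}, and \cite[Lemma 1.3]{BRE221}). The only organizational difference is in the forward direction: the paper deduces $\soc(ZF\bar G)\trianglelefteq F\bar G$ directly from \cite[Proposition~2.10]{BRE221}, viewing $F\bar G$ as a quotient algebra of $FG$, whereas you route both directions symmetrically through the block reduction, which forces you to establish $R(F\bar G)\trianglelefteq F\bar G$ so that \cite[Proposition 4.1]{KUL20} applies to $\bar G$ as well. Both arguments are short; the paper's is slightly more economical for the forward step, while yours has the virtue of not needing the quotient-algebra result from \cite{BRE221} at all.
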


\begin{proof}
	If $\soc(ZFG)$ is an ideal of $FG$, then $R(FG) \trianglelefteq FG$ holds by \cite[Lemma 1.3]{BRE221} and $\soc(ZF\bar{G})$ is an ideal of $F\bar{G}$ by \cite[Proposition~2.10]{BRE221}. For the latter, note that $F\bar{G} \cong FG/\Ker(\nu_{O_{p'}(G)})$ can be viewed as a quotient algebra of $FG$. Now let $R(FG)$ and $\soc(ZF\bar{G})$ be ideals in $FG$ and $F\bar{G}$, respectively. By Remark~\ref{rem:blocks}, this yields $\soc(Z(\bar{B}_0)) \trianglelefteq \bar{B}_0$ and hence $\soc(Z(B_0)) \trianglelefteq B_0$ (see Lemma~\ref{lemma:socblocks}). Since $R(FG)$ is an ideal in $FG$, all blocks of $FG$ are isomorphic to $B_0$ by \cite[Proposition 4.1]{KUL20}. By Remark~\ref{rem:blocks}, we then obtain $\soc(ZFG) \trianglelefteq FG$.
\end{proof}

\begin{Remark} Assume that $G$ is of the form $G = P \rtimes H$ with $P \in \Syl_p(G)$ and an abelian $p'$-group $H$. Then $\soc(ZFG) \trianglelefteq FG$ is equivalent to $\soc(ZF\bar{G}) \trianglelefteq F\bar{G}$ (see Theorem~\ref{theo:reynoldsideal} and Lemma~\ref{lemma:opstrich}). By going over to the quotient group $G/O_{p'}(G)$, we may therefore restrict our investigation to groups $G$ with $O_{p'}(G) = 1$.
\end{Remark}

\subsection{\texorpdfstring{Basis for $J(ZFG)$}{Basis for J(ZFG)}}\label{sec:basisj}
Let $G = P \rtimes H$ be a finite group with $P \in \Syl_p(G)$ and an abelian $p'$-group $H$ (see Theorem~\ref{theo:reynoldsideal}). The aim of this section is to determine an $F$-basis for $J(ZFG)$. In the given situation, the kernel of the canonical map $\nu_P \colon FG \to F[G/P]$ is given by $J(FG)$ (see \cite[Corollary 1.11.11]{LIN18}). In the following, we distinguish two types of conjugacy classes:

\begin{Remark}\label{rem:conjugacyclassesph}
	Let $C \in \Cl(G)$. We obtain $|\bar{C}| = 1$ for the image  $\bar{C} \in \Cl(G/P)$ of $C$ in $G/P$ since this group is abelian. Now two cases can occur: 
	\begin{itemize}
		\item $|C|$ is divisible by $p$: Then $\nu_P(C^+) = |C| \cdot \bar{C}^+ = 0$ yields $C^+ \in \Ker(\nu_P) \cap ZFG = \J(ZFG)$. 
		\item $|C|$ is not divisible by $p$: In this case, $|P|$ divides $|C_G(g)|$ for any $g \in C$. This yields $P \subseteq C_G(g)$ and hence $C \subseteq C_G(P)$. As customary, we decompose $g = g_{p'} g_p$ into its $p'$-part and $p$-part. Note that $g_{p'} \in  O_{p'}(G) \subseteq Z(G)$ and $g_p \in Z(P)$ hold by Remark~\ref{rem:structureCG(P)}. Due to $g_{p'} \in Z(G)$, we have $C = g_{p'}[g_p]$ and the element $C^+ - |C| \cdot g_{p'}$ is contained in $\Ker(\nu_P) \cap ZFG = J(ZFG)$. \qedhere
	\end{itemize}
\end{Remark}
%
%

\begin{definition}\label{def:basiselements}
For $C \in \Cl(G)$ with $C \not \subseteq O_{p'}(G)$, we set $b_C \coloneqq C^+$ if $p$ divides $|C|$, and $b_C \coloneqq C^+ - |C| \cdot g_{p'}$ otherwise.
\end{definition}

With this, we obtain the following basis for $J(ZFG)$:

\begin{theorem}\label{theo:structjzfg}
An $F$-basis for $J(ZFG)$ is given by $B \coloneqq \left\{b_C \colon C \in \Cl(G), \, C \not \subseteq O_{p'}(G)\right\}$. 
\end{theorem}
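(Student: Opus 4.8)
The plan is to show that $B$ spans $J(ZFG)$ and that it is linearly independent, the latter being essentially immediate and the former requiring a dimension count against the decomposition $FG \to F[G/P]$.

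First I would recall the setup: since $G = P \rtimes H$ with $P = O_p(G) \in \Syl_p(G)$, we have $J(FG) = \omega(FP)\cdot FG = \Ker(\nu_P)$ by the cited \cite[Corollary 1.11.11]{LIN18}, and hence $J(ZFG) = J(FG) \cap ZFG = \Ker(\nu_P) \cap ZFG$ by \cite[Theorem 1.10.8]{LIN18}. Thus an element $\sum_{C \in \Cl(G)} a_C C^+$ of $ZFG$ lies in $J(ZFG)$ if and only if its image under $\nu_P$ vanishes. Next I would establish that each $b_C \in B$ indeed lies in $J(ZFG)$: this is exactly the content of Remark~\ref{rem:conjugacyclassesph}, where in the $p \mid |C|$ case $\nu_P(C^+) = |C|\cdot\bar C^+ = 0$, and in the $p \nmid |C|$ case $\nu_P(C^+ - |C|\cdot g_{p'}) = |C|\cdot\overline{g_{p'}} - |C|\cdot\overline{g_{p'}} = 0$ (using $g_{p'} \in Z(G)$, so its conjugacy class is a singleton mapping to the same element of $G/P$ as all of $C$). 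So $\langle B\rangle_F \subseteq J(ZFG)$.

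For linear independence: the elements $C^+$ with $C \in \Cl(G)$ are an $F$-basis of $ZFG$. Writing each $b_C$ in this basis, the ``leading term'' $C^+$ appears only in $b_C$ (the correction terms $|C|\cdot g_{p'}$ are scalar multiples of basis elements $D^+$ with $D = \{g_{p'}\} \subseteq O_{p'}(G)$, hence $D \not\in \{C : C \not\subseteq O_{p'}(G)\}$). So a nontrivial linear dependence among the $b_C$ would force a nontrivial dependence among distinct $C^+$, a contradiction. Hence $B$ is linearly independent and $|B| = |\{C \in \Cl(G) : C \not\subseteq O_{p'}(G)\}|$.

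Finally, the dimension count. We have $\dim_F J(ZFG) = \dim_F ZFG - \dim_F (ZFG / J(ZFG))$, and $ZFG/J(ZFG) \cong \nu_P(ZFG)$. The key claim is that $\nu_P$ maps $ZFG$ onto $Z(F[G/P]) = F[G/P]$ (the latter equality since $G/P \cong H$ is abelian), so $\dim_F(ZFG/J(ZFG)) = |G/P| = |H|$; combined with $\dim_F ZFG = |\Cl(G)|$ this gives $\dim_F J(ZFG) = |\Cl(G)| - |H|$. To see the surjectivity claim, note $\nu_P(C^+) = |C|\cdot\bar C^+$, and for $C$ with $p \nmid |C|$ (equivalently $C \subseteq C_G(P)$, by Remark~\ref{rem:conjugacyclassesph}), the scalar $|C|$ is invertible in $F$; since $C_G(P) = O_{p'}(G) \times Z(P)$ surjects onto $G/P$ under the natural map (indeed $O_{p'}(G) \cong H$ maps isomorphically onto $G/P$), the images $\bar C^+$ for such $C$ already span $F[G/P]$. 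It remains to match this against $|B|$: we need $|\{C : C \not\subseteq O_{p'}(G)\}| = |\Cl(G)| - |H|$, i.e. $|\{C : C \subseteq O_{p'}(G)\}| = |H|$. But $O_{p'}(G) \subseteq Z(G)$ by Remark~\ref{rem:structureCG(P)}, so the conjugacy classes contained in $O_{p'}(G)$ are exactly the singletons $\{x\}$ for $x \in O_{p'}(G)$, and there are $|O_{p'}(G)| = |H|$ of them. This completes the count: $\dim_F J(ZFG) = |\Cl(G)| - |H| = |B|$, so the linearly independent set $B \subseteq J(ZFG)$ is a basis. The main obstacle is the surjectivity of $\nu_P$ restricted to $ZFG$ onto $Z(F[G/P])$; once one observes that the relevant structural facts about $C_G(P)$ and $O_{p'}(G)$ from Remark~\ref{rem:structureCG(P)} pin down both the image and the count of ``small'' classes, everything fits together.
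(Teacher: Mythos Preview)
Your overall strategy---show $B\subseteq J(ZFG)$, check linear independence, then match dimensions---is sound, but the dimension count contains a genuine error: you conflate $O_{p'}(G)$ with $H$. In general $O_{p'}(G)$ is only a (possibly proper) subgroup of $H$. Take $G=S_3$ with $p=3$: then $P=C_3$, $H\cong C_2$, but $O_{3'}(S_3)=1$. Here $C_G(P)=P$, so every class sum $C^+$ with $p\nmid|C|$ maps under $\nu_P$ into $F\cdot\bar 1$, and $\nu_P(ZFG)$ is one-dimensional, not $|H|=2$-dimensional. Thus your claim that $\nu_P$ maps $ZFG$ onto all of $F[G/P]$ is false, as is the assertion $|O_{p'}(G)|=|H|$ at the end. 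What actually holds is that the image of $C_G(P)=O_{p'}(G)\times Z(P)$ in $G/P$ is $O_{p'}(G)P/P\cong O_{p'}(G)$, giving $\dim_F\nu_P(ZFG)=|O_{p'}(G)|$; and the number of conjugacy classes contained in $O_{p'}(G)$ is likewise $|O_{p'}(G)|$. Your two errors happen to cancel, so the final equality $|B|=\dim_F J(ZFG)$ survives---but as written, neither intermediate step is correct.

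The paper's proof avoids this computation entirely. It observes that $B\cup O_{p'}(G)$ is an $F$-basis of $ZFG$, which is essentially your linear-independence argument extended to a full basis. Then, rather than counting dimensions, it uses that $FO_{p'}(G)$ is semisimple: any $x\in J(ZFG)$ decomposes as $x=b+y$ with $b\in\langle B\rangle_F$ and $y\in FO_{p'}(G)$, and since $b\in J(ZFG)$ already, $y=x-b$ is a nilpotent element of the semisimple commutative algebra $FO_{p'}(G)$, hence $y=0$. This is shorter and sidesteps the need to identify $\nu_P(ZFG)$ at all.
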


\begin{proof}
	By Remark \ref{rem:conjugacyclassesph}, we have $B \subseteq J(ZFG)$. Note that the elements in $B \cup O_{p'}(G)$ form an $F$-basis for~$ZFG$. Since the algebra $F O_{p'}(G)$ is semisimple, $J(ZFG)$ is spanned by~$B$.
\end{proof}

\begin{Remark}\label{rem:hgraded}
The decomposition $FG = \bigoplus_{h \in H} FhP$ gives rise to an $H$-grading of $FG$. Note that the basis of $J(ZFG)$ given in Theorem~\ref{theo:structjzfg} consists of homogeneous elements with respect to this grading. In particular, $J(ZFG)$ is a $H$-graded subspace of $FG$. It follows that $\soc(ZFG) = \Ann_{ZFG}(J(ZFG))$ is a $H$-graded subspace of $FG$ as well, that is, we have
	\[ \soc(ZFG) = \bigoplus_{h \in H} \left(\soc(ZFG) \cap FhP\right). \]
\end{Remark}

\subsection{\texorpdfstring{Elements in $\soc(ZFG)$}{Elements in soc(ZFG)}}\label{sec:npc}
Let $G$ be an arbitrary finite group. In this section, we study elements of $\soc(ZFG)$ which arise from certain normal $p$-subgroups of $G$. Using these, we show that $G'$ has nilpotency class at most two if $\soc(ZFG)$ is an ideal in~$FG$. Moreover, we derive a decomposition of $G$ which will later be used to prove Theorem~\ref{theo:decompositionofp}.

\begin{lemma}\label{lemma:312general}
	Let $N$ be a normal $p$-subgroup of $G$ and set $M \coloneqq \left\{x \in [N,G] \colon x^p \in [N,[N,G]]\right\}.$ For $C \in \Cl(G)$ with $C \not \subseteq C_G(N)$, we have $\nu_M(C^+) = 0$ and hence $M^+ \cdot C^+ = 0$. In particular, this implies $\nu_N(C^+) = 0$ and $N^+ \cdot C^+= 0$.
\end{lemma}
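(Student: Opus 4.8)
The plan is to analyze the images of conjugacy class sums under the projections $\nu_M$ and $\nu_N$, exploiting the fact that $M$ is a normal subgroup of $G$ (being defined in a $G$-invariant way from the characteristic-in-$N$ data $[N,G]$ and $[N,[N,G]]$) and that the quotient $[N,G]/[N,[N,G]]$ is elementary abelian, so that $M/[N,[N,G]]$ is the preimage of the subgroup of $p$-th powers — in particular $[N,G]/M$ is elementary abelian and $G$ acts on it. Let me first verify $M \trianglelefteq G$: since $[N,G]$ and $[N,[N,G]]$ are both normal in $G$, conjugation by $g \in G$ permutes $[N,G]$ and fixes $[N,[N,G]]$ setwise, hence preserves the condition $x^p \in [N,[N,G]]$; so $M \trianglelefteq G$. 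Thus $F[G/M]$ makes sense and $\nu_M$ is an algebra homomorphism.

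The key step is to show that if $C \in \Cl(G)$ and $C \not\subseteq C_G(N)$, then the image $\bar C$ of $C$ in $G/M$ has size divisible by $p$, so that $\nu_M(C^+) = |\bar C| \cdot \bar C^+ = 0$ in characteristic $p$ (here I use that $C$ maps onto a single $G/M$-conjugacy class $\bar C$, all of whose elements are hit the same number of times, namely $|C|/|\bar C|$ times, so $\nu_M(C^+) = (|C|/|\bar C|) \cdot |\bar C| \cdot \bar C^+ = |C| \cdot \bar C^+$ — and the cleaner route is simply $\nu_M(C^+) = |C| \bar C^+$, which vanishes iff $p \mid |C|$, but if $p \nmid |C|$ I still need $p \mid |\bar C|$ to be false, i.e. I need a contradiction). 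So the real content is: $C \not\subseteq C_G(N)$ forces $p \mid |C|$. Suppose instead $p \nmid |C|$ and pick $g \in C$; then a Sylow $p$-subgroup $P$ of $G$ lies in $C_G(g)$, so $g$ centralizes $P$, and in particular $g$ centralizes $N \cap P = N$ (as $N$ is a normal $p$-subgroup, $N \subseteq P$). Hence $g \in C_G(N)$, so $C \subseteq C_G(N)$, a contradiction. Therefore $p \mid |C|$, giving $\nu_M(C^+) = |C| \bar C^+ = 0$, and consequently $\nu_N(C^+) = 0$ as well since $M \supseteq$ — wait, we need $M \subseteq N$ or at least compatibility; in fact $\nu_N = \nu_{N/ ?}$ composed appropriately, but the clean statement is that $\nu_N(C^+) = 0$ follows from $p \mid |C|$ directly by the same computation $\nu_N(C^+) = |C|\bar C^+ = 0$, independently of $M$.

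Finally, to pass from $\nu_M(C^+) = 0$ to $M^+ \cdot C^+ = 0$: recall that $\Ker(\nu_M) = \omega(FM) \cdot FG = FG \cdot \omega(FM)$, and that $M^+ \cdot \omega(FM) = 0$ since $M^+ \cdot (1 - m) = M^+ - M^+ = 0$ for $m \in M$. Thus $M^+ \cdot \Ker(\nu_M) = 0$. Now $C^+ \in \Ker(\nu_M)$ because $\nu_M(C^+) = 0$ — but one should be slightly careful: $C^+ \in \Ker(\nu_M)$ is exactly the statement $\nu_M(C^+) = 0$, so $M^+ C^+ = 0$ is immediate, and the same argument with $N$ in place of $M$ gives $N^+ C^+ = 0$. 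I would present it in this order, and I expect the only genuinely delicate point is bookkeeping the two projections simultaneously — it is cleanest to prove $p \mid |C|$ once and then apply the vanishing computation to both $M$ and $N$ in parallel, rather than deriving the $N$-statement from the $M$-statement; the normality of $M$ in $G$ is needed only to make $\nu_M$ a ring homomorphism and to interpret $M^+ C^+$ via the kernel description.
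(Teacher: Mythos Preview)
Your argument contains a genuine gap. The formula you write, $\nu_M(C^+) = |C|\cdot \bar C^+$, is incorrect: since each element of the image class $\bar C$ is hit exactly $|C|/|\bar C|$ times, one has $\nu_M(C^+) = (|C|/|\bar C|)\cdot \bar C^+$. Consequently $\nu_M(C^+)=0$ is equivalent to $p \mid |C|/|\bar C|$, not merely $p \mid |C|$. Your Sylow argument correctly establishes $p \mid |C|$, but that is strictly weaker: for instance in $G=S_4$, $p=2$, $N=V_4$ one finds $M=V_4$, and the transposition class $C$ has $|C|=6$, $|\bar C|=3$ in $G/M\cong S_3$; here $|C|/|\bar C|=2$, but this does \emph{not} follow from $p\mid 6$ alone. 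More tellingly, your argument nowhere uses the specific definition of $M$; if it were valid it would prove $\nu_K(C^+)=0$ for every normal subgroup $K\trianglelefteq G$, which fails already for $K=1$.

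The paper's proof supplies exactly the missing idea. It decomposes $C$ into $N$-orbits $R$ and shows $\nu_M(R^+)=0$ for each one. In the key case $[N,G]\subseteq Z(N)$, the map $n\mapsto [n,r]$ is a homomorphism $N\to N$ with image $S\subseteq [N,G]$ and kernel $C_N(r)$, so $R=Sr$ and $|R|=|S|$; passing to $G/M$ one computes $|\bar R|=|S:S\cap M|$, whence $\nu_M(R^+)=|S\cap M|\cdot \bar R^+$. Now the definition of $M$ enters: any element of order $p$ in $[N,G]$ lies in $M$, so the nontrivial $p$-group $S$ satisfies $p\mid |S\cap M|$, forcing $\nu_M(R^+)=0$. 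The general case is reduced to this one by first factoring out $[N,[N,G]]$. Your observation that $p\mid |C|$ is a useful sanity check (and, refined to the $N$-orbit level, does yield $\nu_N(C^+)=0$ directly), but for $M$ one cannot avoid invoking its defining property.
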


\begin{proof}
	Note that $M$ is a normal subgroup of $G$. Let $R$ be an orbit of the conjugation action of $N$ on $C$ and consider an element $r \in R$. Then $C \not\subseteq C_G(N)$ implies $N \not \subseteq C_G(r),$ which yields $|R| = |N : C_N(r)| \neq 1.$ Set $X \coloneqq \langle N,R \rangle = \langle N,r \rangle.$ 
	\bigskip
	
	First consider the case $[N,G] \subseteq Z(N)$. Then the map $f \colon N \to N,\ n \mapsto [n,r]$ is a group endomorphism with kernel $C_N(r).$ We set $S  \coloneqq \Im(f)$. Then we have $|R| = |N \colon C_N(r)| = |S|$, so in particular, $|S|$ is a nontrivial power of $p.$ Let $\bar{G} \coloneqq G/M$ and set $\bar{g} \coloneqq gM \in \bar{G}$ for $g \in G$ (similarly for subsets of $G$). Note that $\bar{R}$ is an orbit of the conjugation action of $\bar{N}$ on $\bar{C}$. As before, we obtain 
	$|\bar{R}| = |\bar{N} : C_{\bar{N}}(\bar{r})| = |\bar{S}| = |S : S \cap M|.$
	Since $S \subseteq [N,G]$ is a nontrivial $p$-group, $|S \cap M|$ is divisible by $p.$ With this, we obtain 
	$$\nu_M(R^+) = \frac{|R|}{|\bar{R}|} \cdot \bar{R}^+ =  |S \cap M| \cdot \bar{R}^+ = 0.$$

	Now we consider the general case. Let $L \coloneqq [N,[N,G]]$. We set $\tilde{G}\coloneqq G/L$ and write $\tilde{g} \coloneqq g L \in \tilde{G}$ for $g \in G$ (similarly for subsets of $G$). Note that we have $[\tilde{N}, [\tilde{N}, \tilde{G}]] = 1$ and hence $[\tilde{N},\tilde{G}] \subseteq Z(\tilde{N})$. First assume $C_{\tilde{N}}(\tilde{r}) = \tilde{N}$. For any $n \in N,$ one then has $[n,r] \in L$, which implies
	$\nu_{L}(R^+) = |R| \cdot \tilde{r} = 0.$
	Due to $L \subseteq M$, this yields $\nu_M(R^+)  = 0$. 
	Now assume $C_{\tilde{N}}(\tilde{r}) \subsetneq \tilde{N}$. In particular, we have $\tilde{C} \not \subseteq C_{\tilde{G}}(\tilde{N})$. The first part of the proof yields $\nu_{\tilde{M}}(\tilde{R}^+) = 0$, which implies $$\nu_{\tilde{M}}(\nu_{L}(R^+)) = \nu_{\tilde{M}}\left(\frac{|R|}{|\tilde{R}|} \cdot \tilde{R}^+\right) = \frac{|R|}{|\tilde{R}|} \cdot \nu_{\tilde{M}}(\tilde{R}^+) = 0.$$ Due to $\tilde{G}/\tilde{M} = (G/L)/(M/L) \cong G/M$, the map $\nu_{\tilde{M}} \circ \nu_{L}$ can be identified with $\nu_{M}$ and hence $\nu_M(R^+) = 0$ follows. 	Since $R$ was arbitrary, this yields $\nu_M(C^+)  = 0$. In particular, we have $M^+ \cdot C^+ = 0$.
\end{proof}

\begin{Proposition}\label{prop:22}
Let $N$ be a normal $p$-subgroup of $G$ and set $M \coloneqq \left\{x \in [N,G] \colon x^p \in [N,[N,G]]\right\}$ as in Lemma~\ref{lemma:312general}. Moreover, let $K$ be a characteristic subgroup of $C_G(N)$ which satisfies $K^+ \in \soc(ZFC_G(N))$. Then we have $(MK)^+ \in \soc(ZFG)$. In particular, this applies to $K \coloneqq O^{p'}(C_G(N))$.
\end{Proposition}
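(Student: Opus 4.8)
The plan is to verify the two conditions defining membership in $\soc(ZFG)=\Ann_{ZFG}(J(ZFG))$: that $(MK)^{+}$ lies in $ZFG$, and that it annihilates $J(ZFG)$. The first is immediate: since $N\trianglelefteq G$ we have $C\coloneqq C_G(N)\trianglelefteq G$, hence also $K\trianglelefteq G$ as $K$ is characteristic in $C$, while $M\trianglelefteq G$ by Lemma~\ref{lemma:312general}; therefore $MK\trianglelefteq G$ and $(MK)^{+}\in ZFG$. Before treating the annihilation I would record a convenient pair of factorizations: choosing transversals of $M\cap K$ in $K$ and in $M$ and summing their elements yields $s\in FK$ and $t\in FM$ with, by a short coset count using $M\trianglelefteq MK$,
\[(MK)^{+}=M^{+}s=tK^{+};\]
this lets me pass between $M^{+}$ and $K^{+}$ at will.

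Next I would take an arbitrary $z\in J(ZFG)$, write $z=\sum_{C'\in\Cl(G)}a_{C'}(C')^{+}$ with $a_{C'}\in F$, and split $z=z'+z''$, where $z'$ collects the terms indexed by classes $C'\not\subseteq C$ and $z''$ those with $C'\subseteq C$ (recall that each conjugacy class of $G$ is either contained in the normal subgroup $C$ or disjoint from it). Both summands are central, and $z''\in FC$. For the first summand, Lemma~\ref{lemma:312general} gives $M^{+}(C')^{+}=0$ whenever $C'\not\subseteq C$, so, $M^{+}$ being central, $(MK)^{+}(C')^{+}=s\bigl(M^{+}(C')^{+}\bigr)=0$ and hence $(MK)^{+}z'=0$.

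The second summand is the heart of the matter, and the step I expect to be the main obstacle: I must show that $z''$ lies in $J(ZFC)$, the radical of the \emph{center of the smaller group algebra}. For this I would invoke the Brauer homomorphism $\operatorname{br}_{N}\colon (FG)^{N}\to FC$, $\sum_{g}c_{g}g\mapsto\sum_{g\in C}c_{g}g$, an $F$-algebra homomorphism on the subalgebra of $N$-fixed points, which contains $ZFG$. Since $\operatorname{br}_{N}$ retains exactly the part of $z$ supported on $C$, we have $z''=\operatorname{br}_{N}(z)$; and $z$ is nilpotent because $J(ZFG)$ is the nilradical of the commutative algebra $ZFG$, so $z''$ is nilpotent in $FC$. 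As $z''$ is a sum of $G$-class sums, it also lies in $ZFC$, hence is nilpotent there, i.e.\ $z''\in J(ZFC)$. Now the hypothesis $K^{+}\in\soc(ZFC)=\Ann_{ZFC}(J(ZFC))$ yields $K^{+}z''=0$, whence $(MK)^{+}z''=tK^{+}z''=0$. Combining the two cases gives $(MK)^{+}z=0$, and since $z\in J(ZFG)$ was arbitrary, $(MK)^{+}\in\soc(ZFG)$.

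For the supplement I would take $K=O^{p'}(C)$, which is characteristic in $C$, and check its defining hypothesis. Since $C/O^{p'}(C)$ is a $p'$-group, $F[C/O^{p'}(C)]$ is semisimple by Maschke's theorem, so $J(FC)\subseteq\Ker(\nu_{K})=\omega(FK)\cdot FC$; as $K^{+}\omega(FK)=0$ this forces $K^{+}J(FC)=0$, i.e.\ $K^{+}\in\soc(FC)$. Since $K^{+}\in ZFC$ and $J(ZFC)\subseteq J(FC)$, we conclude $K^{+}\in R(FC)\subseteq\soc(ZFC)$, so the Proposition applies with this choice of $K$.
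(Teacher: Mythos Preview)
Your proof is correct and follows essentially the same line as the paper's: split each $z\in J(ZFG)$ according to whether its class-sum constituents lie inside or outside $C_G(N)$, annihilate the outside part with $M^{+}$ via Lemma~\ref{lemma:312general}, and annihilate the inside part with $K^{+}$ after showing it lands in $J(ZFC_G(N))$; the supplement on $K=O^{p'}(C_G(N))$ is argued identically. The one genuine difference is the mechanism for that last step: the paper observes that $ZFG\cap\Ker(\nu_M)\subseteq J(ZFG)$ (since $M$ is a normal $p$-subgroup), so $z'\in J(ZFG)$ and hence $z''=z-z'$ is nilpotent, whereas you invoke the Brauer homomorphism $\mathrm{br}_N$ to see directly that $z''=\mathrm{br}_N(z)$ is nilpotent. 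Your route is a little more conceptual and sidesteps any appeal to $\nu_M$ in this half of the argument; the paper's is more self-contained within the tools already developed.
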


\begin{proof}
By Lemma~\ref{lemma:312general}, $ZFG$ is the sum of the subspaces $ZFG \cap FC_G(N)$ and $ZFG \cap \Ker(\nu_M)$. Since $\Ker(\nu_M) = \omega(FM) FG = J(FM)FG \subseteq J(FG)$ holds (see \cite[Proposition 1.6.4]{LIN18}), we have $ZFG \cap \Ker(\nu_M) \subseteq J(ZFG)$. Since $ZFG \cap FC_G(N)$ is contained in $ZFC_G(N)$, the space $J(ZFG \cap FC_G(N)) \subseteq J(ZFC_G(N))$ is
	annihilated by $K^+$. This proves that $(MK)^+$ annihilates $J(ZFG)$. Now let $K \coloneqq O^{p'}(C_G(N))$. Since $K^+$ annihilates $J(FC_G(N)) = J(FK) FC_G(N)$ (see \cite[Theorem 1.11.10]{LIN18}), we have $K^+ \in \soc(ZFC_G(N))$ as required.
\end{proof}

Now we return to the assumption that $G$ is of the form $P \rtimes H$ with $P \in \Syl_p(G)$ and an abelian $p'$-group $H$ as in Theorem~\ref{theo:reynoldsideal}.

\begin{lemma}\label{lemma:selfcentralizingsubgroup}\label{lemma:selfcentralizing}
Suppose that $N$ is a normal $p$-subgroup of $G$. Then $(C_P(N)M)^+ \in \soc(ZFG)$ follows, where $M \coloneqq \left\{x \in [N,G] \colon x^p \in [N,[N,G]]\right\}$ is defined as in Lemma~\ref{lemma:312general}. In particular, we have $(C_P(N)N)^+ \in \soc(ZFG)$. If $\soc(ZFG)$ is an ideal in $FG$, then $G' \subseteq C_P(N) M$ follows.
\end{lemma}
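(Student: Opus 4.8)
The plan is to deduce all three assertions from Proposition~\ref{prop:22}, applied to the subgroup $K \coloneqq C_P(N)$ of $C_G(N)$. I would start with two elementary observations. Since $N$ is a normal $p$-subgroup of $G$, it lies in $O_p(G) = P$, and $[N,G] \subseteq N$; hence $M \subseteq [N,G] \subseteq N$. Moreover, $C_P(N) = P \cap C_G(N)$ is normal in $C_G(N)$ (as both $P$ and $C_G(N)$ are normal in $G$) and has index a $p'$-number there, so $C_P(N) = O_p(C_G(N))$ is the characteristic normal Sylow $p$-subgroup of $C_G(N)$.

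To apply Proposition~\ref{prop:22} I must verify $C_P(N)^+ \in \soc(ZFC_G(N))$. The element $C_P(N)^+$ lies in $ZFC_G(N)$ because $C_P(N) \trianglelefteq C_G(N)$, and it annihilates $J(FC_G(N)) = \omega(FC_P(N)) \cdot FC_G(N)$ (by \cite[Corollary 1.11.11]{LIN18} applied to $C_G(N)$ and its normal Sylow $p$-subgroup $C_P(N)$), since $C_P(N)^+ \cdot \omega(FC_P(N)) = 0$. As $J(ZFC_G(N)) = J(FC_G(N)) \cap ZFC_G(N)$, this gives $C_P(N)^+ \in \Ann_{ZFC_G(N)}(J(ZFC_G(N))) = \soc(ZFC_G(N))$. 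Since $C_P(N)$ is characteristic in $C_G(N)$, Proposition~\ref{prop:22} now yields $(C_P(N)M)^+ = (MC_P(N))^+ \in \soc(ZFG)$, which is the first claim.

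For the second claim I would use $M \subseteq N$: then $C_P(N)M$ is a subgroup of $C_P(N)N$, so a decomposition into left cosets $C_P(N)N = \bigsqcup_{s \in S} s\,C_P(N)M$ gives $(C_P(N)N)^+ = \big(\sum_{s \in S} s\big)(C_P(N)M)^+$. The element $(C_P(N)N)^+$ is central because $C_P(N)N \trianglelefteq G$, and it annihilates $J(ZFG)$ since $(C_P(N)M)^+$ does by the first claim together with $\soc(ZFG) = \Ann_{ZFG}(J(ZFG))$; hence $(C_P(N)N)^+ \in \soc(ZFG)$. Finally, if $\soc(ZFG) \trianglelefteq FG$, then by Lemma~\ref{lemma:socideal} we have $(C_P(N)M)^+ \in (G')^+ \cdot FG = \rAnn_{FG}(\omega(FG'))$ (see \cite[Lemma 3.1.2]{PAS77}). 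Consequently $(g-1)(C_P(N)M)^+ = 0$, i.e.\ $g \cdot C_P(N)M = C_P(N)M$, for every $g \in G'$; as $C_P(N)M$ is a subgroup, this forces $g \in C_P(N)M$, so $G' \subseteq C_P(N)M$.

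The only step that takes a little care is the verification that $C_P(N)^+ \in \soc(ZFC_G(N))$, which relies on $C_G(N)$ having a normal Sylow $p$-subgroup, and the observation $M \subseteq N$ that makes the second claim fall out of the first; the rest consists of routine manipulations with subgroup-sums and the already-available Proposition~\ref{prop:22} and Lemma~\ref{lemma:socideal}, so I do not expect a genuine obstacle here.
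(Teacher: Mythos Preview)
Your proof is correct and follows essentially the same route as the paper: both identify $C_P(N)$ as the normal Sylow $p$-subgroup of $C_G(N)$ and invoke Proposition~\ref{prop:22} (the paper via the ``in particular'' clause $K=O^{p'}(C_G(N))=C_P(N)$, you via the general hypothesis after directly checking $C_P(N)^+\in\soc(ZFC_G(N))$), then pass to $C_P(N)N$ by a coset decomposition and conclude $G'\subseteq C_P(N)M$ from Lemma~\ref{lemma:socideal}. Your write-up simply unpacks a few steps (notably the implication $(C_P(N)M)^+\in (G')^+\cdot FG\Rightarrow G'\subseteq C_P(N)M$) that the paper leaves implicit.
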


\begin{proof}
	Since $C_P(N)$ is a normal Sylow $p$-subgroup of $C_G(N)$, we have $O^{p'}(C_G(N)) = C_P(N)$. Proposition~\ref{prop:22} then yields $(C_P(N)M)^+ \in \soc(ZFG)$. Since $C_P(N)N$ is a union of cosets of $C_P(N)M$, we obtain $(C_P(N)N)^+ \in \soc(ZFG)$. If $\soc(ZFG)$ is an ideal in $FG$, then $G' \subseteq C_P(N)M$ follows by Lemma~\ref{lemma:socideal}.
\end{proof}

The following result will be particularly useful for our derivation on $p$-groups:
\begin{Corollary}\label{cor:zpdginsoc}
	We have $(Z(P)G')^+ \cdot FG \subseteq \soc(ZFG) \subseteq O_p(Z(G))^+ \cdot FG$. 
\end{Corollary}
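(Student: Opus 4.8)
\textbf{Proof plan for Corollary~\ref{cor:zpdginsoc}.}
The strategy is to deduce both inclusions from the results already established for normal $p$-subgroups. For the first inclusion, I would apply Lemma~\ref{lemma:selfcentralizing} with the choice $N \coloneqq Z(P)$. Since $Z(P)$ is a characteristic subgroup of the normal subgroup $P$, it is normal in $G$, so the lemma applies. One then has to identify the group $M = \{x \in [Z(P),G] \colon x^p \in [Z(P),[Z(P),G]]\}$: because $Z(P)$ is abelian, $[Z(P),[Z(P),G]] = 1$, so $M = \{x \in [Z(P),G] \colon x^p = 1\}$. The key observation is that $C_P(Z(P)) = P$ (every element of $P$ centralizes $Z(P)$ trivially — wait, rather: $Z(P) \subseteq Z(C_P(Z(P)))$, and in fact $C_P(Z(P)) = P$ since $Z(P)$ is central in $P$). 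Hence $C_P(N) M = P M = P$, and Lemma~\ref{lemma:selfcentralizing} gives $P^+ \in \soc(ZFG)$? That is too strong; I should instead take $N$ so that $C_P(N)$ is exactly $Z(P)$.

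So the correct choice is $N \coloneqq P$ itself. Then $C_P(N) = C_P(P) = Z(P)$, and $M = \{x \in [P,G] \colon x^p \in [P,[P,G]]\}$. Since $G = P \rtimes H$, one has $[P,G] = [P,H] \cdot [P,P] = [P,H]P' \subseteq P$, and in any case $M \subseteq [P,G] \subseteq P' [P,H] \subseteq G'$ — actually I want $M \subseteq G'$, which holds because $[P,G] \subseteq G'$. Then Lemma~\ref{lemma:selfcentralizing} yields $(C_P(P)M)^+ = (Z(P)M)^+ \in \soc(ZFG)$. Since $M \subseteq G'$ and $Z(P)$ normalizes $G'$, the set $Z(P)G'$ is a union of cosets of $Z(P)M$, so $(Z(P)G')^+ \in \soc(ZFG)$. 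As $\soc(ZFG)$ is a left ideal in $FG$ (indeed an ideal), it follows that $(Z(P)G')^+ \cdot FG \subseteq \soc(ZFG)$, giving the first inclusion.

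For the second inclusion, I would use the analogue of Lemma~\ref{lemma:socideal} (or rather Lemma~\ref{lemma:socblocks}/\ref{lemma:312general} applied to $G$ itself): by Lemma~\ref{lemma:312general} with $N = P$, any $C \in \Cl(G)$ with $C \not\subseteq C_G(P)$ satisfies $P^+ \cdot C^+ = 0$; since $ZFG$ is spanned by the $C^+$, this shows $\soc(ZFG) \subseteq \rAnn_{ZFG}(\omega(FP))$. Hmm — more directly, every element of $J(ZFG)$ is a combination of the basis elements $b_C$ from Theorem~\ref{theo:structjzfg}, and $O_p(Z(G)) \subseteq Z(P) \cap Z(G)$ consists of elements fixed by conjugation; one checks $O_p(Z(G))^+$ annihilates $J(ZFG)$ using that $O_p(Z(G))$ lies in the center of $G$ and is a $p$-group, so $\omega(F\, O_p(Z(G))) \subseteq J(ZFG)$ and $O_p(Z(G))^+ \cdot \omega(F\,O_p(Z(G))) = 0$. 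This gives $O_p(Z(G))^+ \in \soc(ZFG)$, and since $\soc(ZFG) = \Ann_{ZFG}(J(ZFG))$ and $\soc(ZFG) \cdot \soc(ZFG) \subseteq \soc(ZFG)$ won't immediately give containment in the principal ideal — instead I would argue that $\soc(ZFG)$, being annihilated by $\omega(FP) \cdot FG = J(FG)$ when restricted appropriately, lands in $\Ann_{FG}(\omega(F\,O_p(Z(G)))) = O_p(Z(G))^+ \cdot FG$ by \cite[Lemma 3.1.2]{PAS77}. The cleanest route: $O_p(Z(G))$ is a normal $p$-subgroup, $J(F\,O_p(Z(G)))FG \subseteq J(FG)$, and $\soc(ZFG) = \Ann_{ZFG}(J(ZFG)) \subseteq \Ann_{ZFG}(\omega(F\,O_p(Z(G)))) = O_p(Z(G))^+\cdot FG \cap ZFG$.

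\textbf{Main obstacle.} The delicate point is the bookkeeping in the first inclusion: correctly identifying $C_P(N)$ and $M$ for the right choice of $N$, and verifying that $Z(P)G'$ is genuinely a union of $Z(P)M$-cosets (which needs $M \subseteq G'$ and that $G'$ is normalized by $Z(P)$ — both immediate, but the choice $N=P$ versus $N=Z(P)$ must be made carefully since the wrong one gives a weaker or vacuous statement). For the second inclusion, the subtlety is justifying that $\soc(ZFG)$ is annihilated by $\omega(F\,O_p(Z(G)))$ rather than merely containing $O_p(Z(G))^+$; this follows because $O_p(Z(G)) \subseteq Z(G)$ forces $\omega(F\,O_p(Z(G)))FG$ to consist of nilpotent-ish central-ish elements lying in $J(FG) \cap$ (its span meeting $J(ZFG)$), so any element killing $J(ZFG)$ is killed by it.
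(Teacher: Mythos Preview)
Your argument for the second inclusion is essentially the paper's: for $z \in O_p(Z(G))$ the element $z-1$ is central and nilpotent, hence lies in $J(ZFG)$, so every $x \in \soc(ZFG)$ satisfies $x(z-1)=0$; this forces $x \in O_p(Z(G))^+ \cdot FG$. Your ``cleanest route'' is exactly this, phrased via $\omega(F\,O_p(Z(G))) \subseteq J(ZFG)$ and the annihilator identity from \cite[Lemma 3.1.2]{PAS77}.

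For the first inclusion you make the right choice $N=P$ and correctly obtain $(Z(P)G')^+ \in \soc(ZFG)$ from Lemma~\ref{lemma:selfcentralizing}. But the final step is a genuine gap: you write ``As $\soc(ZFG)$ is a left ideal in $FG$ (indeed an ideal), it follows that $(Z(P)G')^+ \cdot FG \subseteq \soc(ZFG)$.'' This is circular. Whether $\soc(ZFG)$ is an ideal of $FG$ is precisely the property the paper is trying to characterize, and it fails in general (see e.g.\ the holomorph of $\Z/8\Z$ discussed later in the paper). So you cannot invoke it here; Corollary~\ref{cor:zpdginsoc} is stated and used for arbitrary $G = P \rtimes H$, not under the hypothesis $\soc(ZFG) \trianglelefteq FG$.

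The missing observation is that $(Z(P)G')^+ \cdot FG \subseteq (G')^+ \cdot FG \subseteq ZFG$, since each coset of $G'$ is a union of conjugacy classes. Once you know that $(Z(P)G')^+ \cdot a$ lies in $ZFG$ for every $a \in FG$, you can use that $\soc(ZFG)$ is an ideal of $ZFG$ (not of $FG$): for $j \in J(ZFG)$ one has $(Z(P)G')^+ \cdot a \cdot j = (Z(P)G')^+ \cdot j \cdot a = 0$, because $j$ is central and $(Z(P)G')^+$ already annihilates $J(ZFG)$. This is exactly how the paper closes the argument.
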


\begin{proof}
	By Lemma~\ref{lemma:selfcentralizing}, we obtain $(Z(P)M)^+ \in \soc(ZFG)$ for $M = \{x \in [P,G] \colon x^p \in [P,[P,G]]\} \subseteq G'$. In particular, this implies $(Z(P)G')^+ \in \soc(ZFG)$. 
	Since we have $(Z(P)G')^+ \cdot FG \subseteq (G')^+ \cdot FG \subseteq ZFG$, this implies $(Z(P)G')^+ \cdot FG \subseteq \soc(ZFG)$. Now for $z \in O_p(Z(G))$, the element $z-1$ is nilpotent and hence contained in $J(ZFG)$. For $x = \sum_{g \in G} a_g g \in \soc(ZFG)$, this yields $x \cdot (z-1) = 0$, which translates to $a_g = a_{gz}$ for all $g \in G$. Hence $x \in O_p(Z(G))^+ \cdot FG$ follows.
\end{proof}

Observe that the right inclusion in the preceding lemma holds for arbitrary finite groups.
%
The next result is the central ingredient in the proof of Theorem \ref{theo:decompositionofp}:

\begin{Proposition}\label{prop:nilpotencyclassDG}
Suppose that $G' \subseteq C_P(N)N$ holds for every normal $p$-subgroup $N$ of $G$. Then the following hold: 
\begin{enumerate}[(i)]
\item We have $[P,G'] \subseteq Z(G')$. In particular, this implies $G'' \subseteq Z(P)$ and that the nilpotency class of $G'$ is at most two. Moreover, we obtain $\Phi(G') \subseteq Z(G')$.
\item We have $P = C_P(H) \ast [P,H]$ and $G = C_P(H) \ast O^p(G)$.
\end{enumerate}	
\end{Proposition}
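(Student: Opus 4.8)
The plan is to apply the hypothesis $G' \subseteq C_P(N)N$ to a carefully chosen family of normal $p$-subgroups $N$, extracting commutator identities from membership relations.

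\textbf{Part (i).} First I would take $N = P$ itself (which is a normal $p$-subgroup since $G = P \rtimes H$). Then the hypothesis reads $G' \subseteq C_P(P) P = P$, which is automatic and gives nothing, so instead I want to run the argument ``one layer down''. The key idea is to apply the hypothesis inside a quotient. For a normal $p$-subgroup $N$, the hypothesis $G' \subseteq C_P(N)N$ says that modulo $N$, the image of $G'$ centralizes the image of $N$ — i.e., $[G',N] \subseteq N$ trivially, but more usefully, $[G', C_P(N)N] $-type relations. The cleanest route: apply the hypothesis with $N$ running over the terms of a suitable central series. Concretely, take $N$ to be a normal subgroup of $G$ contained in $P$ with $[N,G] = [P,G']$ or similar; the point is to choose $N$ so that $C_P(N)$ is forced to be small. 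I would try $N = [P,G']$ (or $N = P$ passed to the quotient $G/Z(G')$-type construction). Since $[P,G'] \subseteq G' \subseteq P$ and $[P,G']$ is normal in $G$, the hypothesis gives $G' \subseteq C_P([P,G'])\,[P,G']$. Now I compute $[P, G']$: any element of $G'$ lies in $C_P([P,G'])[P,G']$, so for $x \in P$ and $g \in G'$, write $g = cm$ with $c \in C_P([P,G'])$, $m \in [P,G']$; then $[x,g]$ modulo... — here I'd push the computation to show $[P,[P,G']] \subseteq$ something already central, iterating until $[P,G']$ collapses into $Z(G')$. The upshot $[P,G'] \subseteq Z(G')$ then immediately yields $G'' = [G',G'] \subseteq [P,G'] \subseteq Z(G')$, hence $c(G') \le 2$; combined with $O_{p'}(G) \subseteq Z(G)$ and $G = HC_P(H)G'$ one gets $G'' \subseteq Z(P)$ (since $G''$ is a $p$-group centralized by $G'$, by $H$ up to the abelian action, and lies in $Z(G')$). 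Finally $\Phi(G') \subseteq G'' \cdot (G')^p$; for a group of class $\le 2$, $(G')^p$ behaves well and one checks $\Phi(G') \subseteq Z(G')$, likely using that $[P,G'] \subseteq Z(G')$ forces $(G')^p$ into the center too.

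\textbf{Part (ii).} For the central product decomposition, recall from Remark~\ref{rem:structureCG(P)}(i) that $P = C_P(H)[P,H]$ always holds, and $[G,H] = [[G',H],H]$. To upgrade this to a \emph{central} product $P = C_P(H) \ast [P,H]$ I must show $[C_P(H), [P,H]] = 1$. Since $[P,H] \subseteq G' \subseteq P$, apply the hypothesis with $N = [P,H]$ (normal in $G$ by Remark~\ref{rem:structureCG(P)}(i)): $G' \subseteq C_P([P,H])\,[P,H]$. In particular $C_P(H) \subseteq P = \langle C_P(H), [P,H]\rangle$ and $[C_P(H), [P,H]] \subseteq [P, [P,H]] \cap [G',[P,H]]$; using part (i) that $[P,G'] \subseteq Z(G')$ and $[P,H] \subseteq G'$, we get $[C_P(H),[P,H]] \subseteq [P,G'] \cap [P,H]$ — I then need to argue this is trivial, which should follow because $[C_P(H), [P,H]]$ is centralized by $H$ (as $C_P(H)$ and $[P,H]$ are both $H$-invariant with $C_P(H)$ fixed pointwise) while also lying in $[G,H] = [[G,H],H]$, and an element both centralized by $H$ and in $[[G,H],H]$ must be trivial by coprime action ($H$ a $p'$-group acting on the $p$-group $[P,H]$). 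That gives $P = C_P(H) \ast [P,H]$. Then $G = HP = H C_P(H) [P,H]$, and $O^p(G) = H[G,H] = H[P,H]$ by Remark~\ref{rem:structureCG(P)}(ii); since $[C_P(H), H[P,H]] = [C_P(H),H][C_P(H),[P,H]] = 1$, we conclude $G = C_P(H) \ast O^p(G)$.

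\textbf{Main obstacle.} The delicate point is Part (i): extracting $[P,G'] \subseteq Z(G')$ from the single containment $G' \subseteq C_P(N)N$ applied to well-chosen $N$. The subtlety is that the decomposition $g = cm$ is not canonical, so commutator estimates must be done carefully (using class $\le 2$ collection formulas only after partial results are in hand), and one likely needs to iterate: first get $[P,[P,G']]$ into a known central piece, then bootstrap. I would also need to be careful that all the coprime-action arguments in Part (ii) are applied to genuinely $H$-invariant $p$-subgroups, invoking \cite[Theorem 5.3.6]{GOR68} for the $[[X,H],H] = [X,H]$ identities.
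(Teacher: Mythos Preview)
Your proposal has genuine gaps in both parts.

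\textbf{Part (i).} You correctly sense that one must pick $N$ so that $C_P(N)$ is ``forced to be small'', but you then try $N=[P,G']$, which does not have this property. Applying the hypothesis there only gives $[G',[P,G']]\subseteq [P,G']'$, and iterating along the derived series of $[P,G']$ yields $[G',N_1^{(k)}]=N_1^{(k+1)}$; this does \emph{not} force $N_1'=1$. The paper instead takes $N=D$ a \emph{critical subgroup} of $P$ in the sense of \cite[Theorem~5.3.11]{GOR68}: then $C_P(D)\subseteq D$, so the hypothesis gives $G'\subseteq D$ outright, and the built-in properties $[P,D]\subseteq Z(D)$ and $\Phi(D)\subseteq Z(D)$ immediately yield $[P,G']\subseteq Z(D)\subseteq C_G(G')$ and $(G')^p\subseteq Z(D)$, hence $[P,G']\subseteq Z(G')$ and $\Phi(G')\subseteq Z(G')$ in one stroke. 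Your remark that ``for a group of class $\le 2$, $(G')^p$ behaves well'' is not enough: in class~$2$ one has $[x^p,y]=[x,y]^p$, which lands in $(G'')^p$, not obviously $1$.

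\textbf{Part (ii).} Two claims fail. First, ``$B=[C_P(H),[P,H]]$ is centralized by $H$'' is not justified: for $c\in C_P(H)$, $q\in[P,H]$, $h\in H$ one computes (using $[P,G']\subseteq Z(G')$) that $[c,q]^h[c,q]^{-1}=[c,[h,q]]\in B$, so you only recover $H$-invariance, and demanding this to vanish is exactly $B=1$ --- the argument is circular. Second, even granting $B\subseteq C_{[P,H]}(H)$, the conclusion $B=1$ would require $C_{[P,H]}(H)=1$; coprime action gives this only when $[P,H]$ is abelian, which you do not know (take $H\cong C_3$ acting on $Q_8$: then $[Q_8,H]=Q_8$ but $C_{Q_8}(H)=Z(Q_8)\neq 1$). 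Your choice $N=[P,H]$ is also never used substantively. The paper instead sets $B=[C_P(H),[P,H]]$, observes $B\subseteq Z(G')$ by part~(i), shows $N\coloneqq C_P(H)B$ is normal in $G$, and applies the hypothesis to \emph{this} $N$. From $G'\subseteq C_P(N)N$ one extracts $[P,H]=[G',H]\subseteq [N,H][C_P(N),H]\subseteq B\,C_P(N)$, whence $B=[C_P(H),[P,H]]\subseteq [C_P(H),B\,C_P(N)]=[C_P(H),B]\subseteq[P,B]$; since $B$ is a $p$-group this forces $B=1$.
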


\begin{proof}
$\null$
\begin{enumerate}[(i)]
\item Let $D$ be a critical subgroup of $P$ (in the sense of \cite[Theorem 5.3.11]{GOR68}). Then $D$ is normal in $G$, and $Z(D)$ contains $\Phi(D)$, $C_P(D)$ and $[P,D]$. By assumption, we have $G' \subseteq D C_P(D) = D$. Hence we have 
\[[P,G'] \subseteq [P,D] \subseteq Z(D) \subseteq C_G(G'),\]
which implies $[P,G'] \subseteq Z(G')$. With the $3$-subgroups lemma, we obtain $[G'', P] = [[G',G'],P] = 1$, that is, $G'' \subseteq Z(P)$. Furthermore, for $x \in G'$, we have $x \in D$ and hence $x^p \in Z(D) \subseteq C_G(G')$, which implies $x^p \in Z(G')$.
\item By (i), we have $B \coloneqq [C_P(H),[P,H]] \subseteq [P,G'] \subseteq Z(G')$. Furthermore, $B$ is normal in $C_P(H) [P,H] = P$ and $PH = G$. Due to 
\[[C_P(H),G] = [C_P(H), C_P(H)[P,H]H] = [C_P(H),C_P(H)[P,H]] \subseteq C_P(H) B,\] the subgroup $N \coloneqq C_P(H) B$ is normal in $G$. Moreover, we find $[N,H] = [C_P(H)B,H] = [B,H]$. By assumption, we have $G' \subseteq C_P(N) N$. By Remark~\ref{rem:trivianeu}, this yields 
\[[P,H] = [G',H] \subseteq [C_P(N)N,H] \subseteq [N,H] [C_P(N),H],\]
since for $c \in C_P(N)$, $n \in N$ and $h \in H$, we have
$[cn,h] = c[n,h]c^{-1}[c,h] = [n,h] [c,h]$. Hence 
$[P,H] \subseteq [B,H] [C_P(N),H] \subseteq B C_P(N)$ follows, which yields
\[B = [C_P(H),[P,H]] \subseteq [C_P(H), B C_P(N)] = [C_P(H),B] \subseteq [P,B].\]
Hence $B = 1$ follows, which yields $P = C_P(H) \ast [P,H]$. By Remark~\ref{rem:trivianeu}, this implies $G = C_P(H) * H[P,H] = C_P(H) * O^p(G)$. \qedhere 
\end{enumerate}	
\end{proof}

By Lemma~\ref{lemma:selfcentralizing}, the properties given in Proposition~\ref{prop:nilpotencyclassDG} hold whenever $\soc(ZFG)$ is an ideal in $FG$. We conclude this section with a result on $p$-groups, which is an immediate consequence of Lemma~\ref{lemma:selfcentralizing}: 
\begin{lemma}
	If $G$ is a $p$-group satisfying $\soc(ZFG) \trianglelefteq FG$, then $G$ is metabelian.
\end{lemma}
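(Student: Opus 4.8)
The plan is to apply Lemma~\ref{lemma:selfcentralizing} with the ambient group $G$ being a $p$-group, so that $P = G$ and every normal subgroup is automatically a normal $p$-subgroup. Concretely, I would take $N \coloneqq G$ itself: then $C_P(N) = C_G(G) = Z(G)$ and, since $[N,G] = [G,G] = G'$, the subgroup $M = \{x \in G' \colon x^p \in [G',G'] = G''\}$ is in particular contained in $G'$. Lemma~\ref{lemma:selfcentralizing} (or directly Corollary~\ref{cor:zpdginsoc}) tells us that whenever $\soc(ZFG)$ is an ideal in $FG$, one has $G' \subseteq C_P(N)M$. Applying this with any normal $p$-subgroup $N$ of $G$ shows that $G$ satisfies the hypothesis of Proposition~\ref{prop:nilpotencyclassDG}, namely $G' \subseteq C_P(N)N$ for every normal $p$-subgroup $N$ (note $M \subseteq N$ here since $M \subseteq [N,G] \subseteq N$, so $C_P(N)M \subseteq C_P(N)N$).

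With the hypothesis of Proposition~\ref{prop:nilpotencyclassDG} verified, part~(i) of that proposition yields $G'' \subseteq Z(G)$; in particular $G''$ is abelian, so $G'$ is metabelian. But a cleaner conclusion is available: part~(i) also gives $[G,G'] \subseteq Z(G')$, and applying the $3$-subgroups lemma once more (or simply noting $G'' \subseteq [G,G']$ and that $[G,G']$ is abelian) forces $G''$ to be abelian and in fact, since $G'' \subseteq Z(P) = Z(G)$, the subgroup $G''$ is central, hence $(G')' = G'' $ has trivial commutator, i.e. $G'$ is abelian. Therefore the derived series of $G$ terminates after two steps and $G$ is metabelian.

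The argument is essentially a one-line deduction once the earlier machinery is in place, so I do not expect a genuine obstacle. The only point requiring a moment's care is confirming that the hypothesis ``$G' \subseteq C_P(N)N$ for every normal $p$-subgroup $N$'' of Proposition~\ref{prop:nilpotencyclassDG} really does follow from $\soc(ZFG) \trianglelefteq FG$ in the $p$-group case: here the decomposition $G = P \rtimes H$ of Corollary~\ref{cor:gph} is trivial (one has $G = P$, $H = 1$), so Lemma~\ref{lemma:selfcentralizing} applies verbatim and its conclusion $G' \subseteq C_P(N)M \subseteq C_P(N)N$ is exactly what is needed. Thus the proof reduces to: invoke Lemma~\ref{lemma:selfcentralizing} to get the hypothesis of Proposition~\ref{prop:nilpotencyclassDG}, then read off $G'' \subseteq Z(G)$ from part~(i) of that proposition, concluding that $G$ is metabelian.
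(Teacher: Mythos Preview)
There is a genuine gap in your argument. You correctly deduce from Lemma~\ref{lemma:selfcentralizing} that the hypothesis of Proposition~\ref{prop:nilpotencyclassDG} is satisfied, and part~(i) of that proposition then gives $[P,G'] \subseteq Z(G')$ and $G'' \subseteq Z(P) = Z(G)$. But this only says that $G'$ has nilpotency class at most two, not that $G'$ is abelian. Your final sentence contains the slip: from ``$G''$ is central'' you write ``$(G')' = G''$ has trivial commutator, i.e.\ $G'$ is abelian''. The statement ``$G''$ has trivial commutator'' means $G''' = [G'',G''] = 1$, which is far weaker than $G'' = 1$; it certainly does not yield that $G'$ is abelian. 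Proposition~\ref{prop:nilpotencyclassDG} simply does not assert abelianness of $G'$, and nothing in your argument closes the gap between $c(G') \le 2$ and $c(G') \le 1$.

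The paper's proof avoids this by making a sharper choice of $N$ in Lemma~\ref{lemma:selfcentralizing}. Rather than feeding an arbitrary $N$ into Proposition~\ref{prop:nilpotencyclassDG}, one takes $N = A$ to be a maximal abelian normal subgroup of the $p$-group $G$. Such a subgroup is self-centralizing, i.e.\ $C_G(A) = A$, so $C_P(N)N = A$. Lemma~\ref{lemma:selfcentralizing} then gives $G' \subseteq C_P(A)M \subseteq C_P(A)A = A$, and since $A$ is abelian, $G'$ is abelian. The whole proof is one line once this choice is made. Your route through Proposition~\ref{prop:nilpotencyclassDG} loses exactly the information that the self-centralizing property of $A$ provides.
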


\begin{proof}
	Let $A$ be a maximal abelian normal subgroup of $G$. Since $C_G(A) = A$ holds, Lemma~\ref{lemma:selfcentralizing} yields $G' \subseteq A$. In particular, $G'$ is abelian. 
\end{proof}

\subsection{\texorpdfstring{Special case $G' \subseteq Z(P)$}{Special case G' \subseteq Z(P)}}\label{sec:casepabelian}
Let $G = P \rtimes H$ be a finite group with $P \in \Syl_p(G)$ and an abelian $p'$-group $H$. In this section, we show that $\soc(ZFG)$ is an ideal in $FG$ if $G' \subseteq Z(P)$ holds.

\begin{lemma}\label{lemma:conjstructurePabelian}
	$\null$
	\begin{enumerate}[(i)]
		\item Let $g \in G$ with $g_p \in Z(P)$. Then $[g] = [h] \cdot [g_{p}]$ holds for $h \in H \cap gP$.
		\item For $u \in Z(P)$ and $h \in C_G(H)$, we have $h[u] \subseteq [hu].$ 
		\item Assume $[P,G] \subseteq Z(P)$. Let $h \in C_G(H)$ and write $[h] = U_h h$ with $U_h \coloneqq \{[a,h] \colon a \in G\}$. Then $U_h$ is a normal subgroup of $G$.
	\end{enumerate}
\end{lemma}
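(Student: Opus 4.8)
The plan is to handle the three parts separately, in each case exploiting the factorization $G = PH$ together with the fact that $Z(P)$ is a normal subgroup of $G$, so that conjugation by a "$P$-part" and by an "$H$-part" of a group element can be analyzed independently. Throughout I write a general element of $G$ as $ph'$ with $p \in P$, $h' \in H$, and I use the two elementary observations that $[u] = \{h' u h'^{-1} : h' \in H\} \subseteq Z(P)$ for $u \in Z(P)$ (because $P$ acts trivially by conjugation on $Z(P)$), and $[h] = \{p h p^{-1} : p \in P\}$ for $h \in H$ (because $H$ is abelian, so its part of any conjugator fixes $h$).

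For (i), I would first show that $g_{p'}$ is conjugate to $h$ in $G$. Set $M := P\langle g_{p'}\rangle$; then $P$ is a normal Sylow $p$-subgroup of $M$, and both $\langle g_{p'}\rangle$ and $H \cap M$ are complements of $P$ in $M$, so by the Schur--Zassenhaus theorem they are conjugate in $M$. Since $H \cap M \subseteq H$, this produces $z \in G$ with $z g_{p'} z^{-1} \in H$, and as this element lies in the coset $g_{p'}P = gP$ it must equal $h$. The inclusion $[g] \subseteq [h]\cdot[g_p]$ is then immediate from $xgx^{-1} = (xg_{p'}x^{-1})(xg_px^{-1})$ and $[g_{p'}] = [h]$. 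For the reverse inclusion, given $a \in [h]$ and $b \in [g_p]$ I would build a conjugator for $g \mapsto ab$ as a composite of three moves: first conjugate $g$ by the element $z$ above to get $h \cdot g_p^{0}$ with $g_p^{0} = z g_p z^{-1} \in [g_p]$; then conjugate by a suitable element of $H$ (which fixes $h$ and moves $g_p^0$ to $b$, using that $H$ acts transitively on $[g_p]$); then conjugate by the element $p \in P$ with $php^{-1} = a$, which leaves $b \in Z(P)$ fixed. This chains to $g \sim hg_p^0 \sim hb \sim ab$.

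For (ii), since $[u] = \{h'uh'^{-1} : h' \in H\}$ and $h \in C_G(H)$ commutes with every $h' \in H$, one has $h'(hu)h'^{-1} = h\,(h'uh'^{-1})$, and the right-hand side ranges exactly over $h[u]$ as $h'$ ranges over $H$; since $h'(hu)h'^{-1} \in [hu]$, this gives $h[u] \subseteq [hu]$. For (iii), I would first note that the hypothesis $[P,G] \subseteq Z(P)$ forces $G' \subseteq Z(P)$, so every commutator in play is central in $P$. Writing a general element of $G$ as $p_1 h_1$ and using $[h_1, h] = 1$ (as $h \in C_G(H)$), the commutator identity $[p_1 h_1, h] = p_1[h_1,h]p_1^{-1}[p_1,h]$ collapses to $[p_1,h]$, so $U_h = \{[p_1, h] : p_1 \in P\}$; the map $P \to Z(P)$, $p_1 \mapsto [p_1,h]$, is then a homomorphism (multiplicativity uses precisely that $[p_2,h] \in Z(P)$ is centralized by $p_1 \in P$), so $U_h$ is a subgroup of $Z(P)$. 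Finally $P$ normalizes $U_h$ because $U_h \subseteq Z(P)$, and $H$ normalizes $U_h$ because $h'[p_1,h]h'^{-1} = [h'p_1h'^{-1}, h'hh'^{-1}] = [h'p_1h'^{-1}, h] \in U_h$ (using $h \in C_G(H)$ and $P \trianglelefteq G$); as $G = PH$, this yields $U_h \trianglelefteq G$.

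I expect the main obstacle to be the reverse inclusion in (i), i.e.\ showing $[h]\cdot[g_p] \subseteq [g]$: it needs the Schur--Zassenhaus input that $g_{p'} \sim h$ and the careful "three-step" conjugation argument, and one must check at each step that the element being moved around retains its prescribed $p$- and $p'$-parts. Parts (ii) and (iii) should then be routine commutator manipulations once the structural fact that conjugacy classes of $Z(P)$-elements (and of elements of $H$) are governed by $H$-conjugation (resp.\ $P$-conjugation) is established.
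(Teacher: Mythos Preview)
Your proposal is correct and follows essentially the same route as the paper. The only noteworthy difference is in part (i): the paper obtains $[g_{p'}]=[h]$ from the $p'$-section description $gP$ in Remark~\ref{rem:trivianeu}\,(iv) rather than via Schur--Zassenhaus, and then, after reducing to $g=hg_p$, computes $xgx^{-1}=p_xhp_x^{-1}\cdot h_xg_ph_x^{-1}$ for $x=p_xh_x$ in one line, which gives both inclusions simultaneously; your three-step conjugation is a more explicit unpacking of exactly this computation. Parts (ii) and (iii) match the paper's arguments almost verbatim.
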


\begin{proof}
	$\null$
	\begin{enumerate}[(i)] 
		\item By Remark~\ref{rem:trivianeu}, $gP$ is a $p'$-section of $G$.
		In particular, $[h]$ is the unique $p'$-conjugacy class contained in $gP$ and hence $[g_{p'}] = [h]$ follows. Since $H$ is abelian, we have $g_{p'} = u h u^{-1}$ for some $u \in P$. Due to $g_p \in Z(P)$, this yields $g = u h g_p u^{-1}$ and hence $[g] = [h g_p].$ We may therefore assume $g_{p'} = h$.  
		For $x = p_x h_x$ with $p_x \in P$ and $h_x \in H$, we have $x g x^{-1} = p_x h p_x^{-1} \cdot h_x g_p h_x^{-1}$. This yields $$[g] = \{p_x h p_x^{-1} \colon p_x \in P \} \cdot \{h_x g_p h_x^{-1} \colon h_x \in H\} = [h] \cdot [g_p].$$
		
		\item Let $u' \in [u]$. Due to $u \in Z(P)$, there exists an element $h' \in H$ with $h' u h'^{-1} = u'$ (see Remark~\ref{rem:trivianeu}). Since $h$ and $h'$ commute, we obtain $h u' = h' h u h'^{-1} \in [hu]$. 
		\item We have $U_h = \{[a,h] \colon a \in P\}$. As $[p_1 p_2,h] = [p_1,h]\cdot [p_2,h]$ holds for all $p_1, p_2 \in P$, $U_h$ is a subgroup of $G'$. Since the elements of $P$ centralize $U_h \subseteq [P,G] \subseteq Z(P)$ and conjugation with elements of $H$ permutes the elements $[a,h]$ with $a \in P$, it follows that $U_h$ is normal in $G$. \qedhere
	\end{enumerate}
\end{proof}

\begin{Corollary}\label{cor:PabelianmultwithHsuffices}
Let $g \in G$ with $g_p \in Z(P)$. For $y \in ZFG$ with $y \cdot [g_{p'}]^+ = 0$, we have $y \cdot [g]^+ = 0$. 	
\end{Corollary}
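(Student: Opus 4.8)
The statement to prove is Corollary~\ref{cor:PabelianmultwithHsuffices}: if $g \in G$ has $g_p \in Z(P)$ and $y \in ZFG$ satisfies $y \cdot [g_{p'}]^+ = 0$, then $y \cdot [g]^+ = 0$. The natural approach is to factor the class sum $[g]^+$ through the product of the class sum of the $p'$-part and the class sum of the $p$-part, using part~(i) of Lemma~\ref{lemma:conjstructurePabelian}, and then exploit commutativity of $ZFG$ to move $y$ past the $p'$-part factor first.

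\textbf{Key steps.} First I would set $h \in H \cap gP$ (such $h$ exists and is unique up to conjugacy since $gP$ is a $p'$-section by Remark~\ref{rem:trivianeu}), so that $[g_{p'}] = [h]$. By Lemma~\ref{lemma:conjstructurePabelian}(i), we have $[g] = [h] \cdot [g_p]$ as a product of subsets of $G$, and moreover this product is ``multiplicity-free'' in the appropriate sense — the decomposition $g = (\text{$p'$-part})(\text{$p$-part})$ is unique, so distinct pairs $(a,b) \in [h] \times [g_p]$ give distinct products $ab$. This is the point that needs a short justification: an element of $[g]$ has a unique expression as a commuting product of its $p'$-part and its $p$-part, the $p'$-part lies in $[h]$ and the $p$-part lies in $[g_p]$ (both conjugacy classes being ``coordinatewise'' under the $P$- and $H$-actions as in the proof of (i)). Consequently $[g]^+ = [h]^+ \cdot [g_p]^+$ as an identity in $FG$. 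Now since $y \in ZFG$ and $[h]^+ = [g_{p'}]^+ \in ZFG$, we may write $y \cdot [g]^+ = y \cdot [h]^+ \cdot [g_p]^+ = (y \cdot [g_{p'}]^+) \cdot [g_p]^+ = 0 \cdot [g_p]^+ = 0$, where the reordering is legitimate because all three factors $y$, $[g_{p'}]^+$, $[g_p]^+$ are central.

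\textbf{Main obstacle.} The only genuinely delicate point is establishing the identity $[g]^+ = [h]^+ \cdot [g_p]^+$ in $FG$ rather than merely the set equality $[g] = [h] \cdot [g_p]$ — i.e.\ verifying that no cancellation or coincidence of coefficients occurs when expanding the product. This follows from the uniqueness of the $p$-part/$p'$-part decomposition of a group element: if $a_1 b_1 = a_2 b_2$ with $a_i \in [h] \subseteq H^G$ a $p'$-element, $b_i \in [g_p] \subseteq Z(P)^G$ a $p$-element, and $a_i$ commuting with $b_i$, then $a_1 = a_2$ and $b_1 = b_2$; here one uses that $a_i$ is conjugate into $H$ and $b_i$ into $Z(P)$, and in a conjugate $G^x = P^x \rtimes H^x$ the $p'$-part and $p$-part are again the respective components. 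Everything else is bookkeeping with the commutativity of $ZFG$ and the containments $[g_{p'}]^+, [g_p]^+ \in ZFG$.
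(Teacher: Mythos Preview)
Your route works, but it is not quite the paper's, and your justification of the ``main obstacle'' has a real gap.

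\textbf{Comparison with the paper.} The paper does \emph{not} prove the identity $[g]^+ = [g_{p'}]^+ \cdot [g_p]^+$. Instead it observes that $P$ acts on $[g]$ by conjugation, and that each $P$-orbit has the form $[g_{p'}]\,u$ for some $u \in P$: for $x \in [g]$ one has $x_p \in Z(P)$, so the $P$-orbit of $x$ is $\{p\,x_{p'}\,p^{-1} : p \in P\}\cdot x_p = [g_{p'}]\,x_p$. Thus $[g]$ is a disjoint union of right translates of $[g_{p'}]$, giving $[g]^+ = \sum_i [g_{p'}]^+ u_i$ for suitable $u_i$, and $y\cdot[g]^+ = \sum_i (y\cdot[g_{p'}]^+)\,u_i = 0$. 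This sidesteps the multiplicity question entirely.

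\textbf{The gap in your argument.} Your factorisation $[g]^+ = [g_{p'}]^+\,[g_p]^+$ is in fact true, but your proposed proof of injectivity does not go through as written. You assert that for $a \in [h]$ and $b \in [g_p]$ the elements $a$ and $b$ commute, and then invoke uniqueness of the $p$/$p'$-decomposition; however your remark about ``a conjugate $G^x = P^x \rtimes H^x$'' does not establish commutation, since different pairs $(a,b)$ would require different conjugates $x$, and there is no single decomposition $G = P^x \rtimes H^x$ in which every such $a$ lies in $H^x$ and every such $b$ in $P^x$. The correct reason is the abelianness of $H$: writing $a = php^{-1}$ with $p \in P$ and $b = k g_p k^{-1}$ with $k \in H$, one has $b \in Z(P)$, so $ab = ba$ is equivalent to $hb = bh$; and since $h$ centralises $g_p$ (because $g_{p'} \in [h]$ is a $P$-conjugate of $h$ and $g_p \in Z(P)$) and $H$ is abelian, $h b h^{-1} = k(h g_p h^{-1})k^{-1} = k g_p k^{-1} = b$. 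With this in hand your argument is complete; alternatively one can bypass commutation by the counting identity $|[h]|\cdot|[g_p]| = |[g]|$, which follows from $C_G(h)\,C_G(g_p) \supseteq HP = G$.
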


\begin{proof}
The group $P$ acts on $[g]$ by conjugation with orbits of the form $[g_{p'}]u$ with $u \in P$ (see Lemma~\ref{lemma:conjstructurePabelian}). In particular, $[g]$ is a disjoint union of sets of this form. Hence $y \cdot [g_{p'}]^+$ implies $y \cdot [g]^+ = 0$.
\end{proof}	

\begin{lemma}\label{lemma:pabelianaux}
Let $y = \sum_{g \in G} a_g g \in \soc(ZFG)$. For $h \in C_G(H)$ and $u \in Z(P)$, we have $a_{hu} = a_h$. 
\end{lemma}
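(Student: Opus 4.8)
The plan is to produce an explicit element of $J(ZFG)$ annihilating $y$ and then compare coefficients. Since the claim is trivial when $u = 1$, and since $u \mapsto u^{-1}$ is a bijection of $Z(P)$, it suffices to prove $a_{hu^{-1}} = a_h$ for a fixed $u \in Z(P) \setminus \{1\}$. I would first record that $[u]$ consists of $p$-elements — so $u_{p'} = 1$ — and is not contained in $O_{p'}(G)$ (as $u$ is a nontrivial $p$-element), and that $|[u]| = |G : C_G(u)|$ divides $|G : P| = |H|$ because $P \subseteq C_G(u)$; in particular $p \nmid |[u]|$, so $|[u]|$ is invertible in $F$. By Definition~\ref{def:basiselements} and Theorem~\ref{theo:structjzfg}, the element $b_{[u]} = [u]^+ - |[u]| \cdot 1$ then lies in $J(ZFG)$.

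Since $y \in \soc(ZFG) = \Ann_{ZFG}(J(ZFG))$, we have $y \cdot b_{[u]} = 0$, that is, $y \cdot [u]^+ = |[u]| \cdot y$. Comparing the coefficient of $h$ on both sides, the right-hand side contributes $|[u]| \cdot a_h$, while the left-hand side, from $y \cdot [u]^+ = \sum_{g \in G} \sum_{u' \in [u]} a_g (g u')$, contributes $\sum_{u' \in [u]} a_{h(u')^{-1}}$. The crucial observation is that as $u'$ ranges over $[u]$, the element $(u')^{-1}$ ranges over $[u^{-1}]$, and by Lemma~\ref{lemma:conjstructurePabelian}(ii) — applied to $u^{-1} \in Z(P)$ and $h \in C_G(H)$ — we have $h[u^{-1}] \subseteq [hu^{-1}]$. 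Hence every $h(u')^{-1}$ lies in the single conjugacy class $[hu^{-1}]$, and since $y \in ZFG$ is constant on conjugacy classes we get $a_{h(u')^{-1}} = a_{hu^{-1}}$ for all $u'$. Thus the coefficient of $h$ on the left-hand side equals $|[u]| \cdot a_{hu^{-1}}$, and comparing with the right-hand side yields $|[u]| \cdot a_{hu^{-1}} = |[u]| \cdot a_h$; dividing by the invertible scalar $|[u]|$ gives $a_{hu^{-1}} = a_h$, as desired.

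The argument is short and I do not expect a genuine obstacle; the points requiring a little care are purely bookkeeping — verifying that $b_{[u]}$ really is of the shape $C^+ - |C| \cdot 1$ (this is where $u_{p'} = 1$ and $[u] \not\subseteq O_{p'}(G)$ enter) and that $|[u]|$ is prime to $p$ — together with the correct handling of the bijection $u' \mapsto (u')^{-1}$ between $[u]$ and $[u^{-1}]$ and the invocation of Lemma~\ref{lemma:conjstructurePabelian}(ii).
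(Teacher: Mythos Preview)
Your proof is correct and follows essentially the same approach as the paper's: both produce the Jacobson-radical element $b_{[u^{\pm 1}]} = [u^{\pm 1}]^+ - |[u]| \cdot 1$, annihilate $y$ by it, and compare the coefficient of $h$ using Lemma~\ref{lemma:conjstructurePabelian}\,(ii) to collapse the sum. The only cosmetic difference is that the paper works with $b_{[u^{-1}]}$ directly to obtain $a_{hu} = a_h$, whereas you work with $b_{[u]}$ to obtain $a_{hu^{-1}} = a_h$ and then invoke the bijection $u \mapsto u^{-1}$ on $Z(P)$; neither route is materially shorter.
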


\begin{proof}
We may assume $u \neq 1$. By Remark~\ref{rem:structureCG(P)}, $m \coloneqq |[u]|$ is not divisible by $p$. Hence we have $b_{[u^{-1}]} = [u^{-1}]^+ - m \cdot 1$ (see Theorem~\ref{theo:structjzfg}) and the coefficient of $h$ in $y \cdot b_{[u^{-1}]} = 0$ is given by
	$$ \sum_{u' \in [u]} a_{hu'} -m a_h = m (a_{hu} - a_{h}),$$
	since the elements in $h[u]$ are conjugate by Lemma \ref{lemma:conjstructurePabelian}\,(ii). Since $p$ does not divide $m$, we obtain $a_{h u} = a_{h}$. 
\end{proof}

\begin{theorem}\label{theo:pabeliansocideal}
If $G = C_G(H)Z(P)$ holds, then $\soc(ZFG) \subseteq Z(P)^+ \cdot FG$ follows. In particular, if we have $G' \subseteq Z(P)$, then $\soc(ZFG)$ is an ideal in $FG$.
\end{theorem}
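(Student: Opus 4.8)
The plan is to translate the inclusion $\soc(ZFG) \subseteq Z(P)^+ \cdot FG$ into a statement about coefficients and then read it off directly from Lemma~\ref{lemma:pabelianaux}. As a preliminary observation, $Z(P)$ is a characteristic subgroup of the normal subgroup $P$, hence normal in $G$, so $Z(P)^+ \cdot FG = FG \cdot Z(P)^+$ is spanned by the coset sums $(gZ(P))^+$, $g \in G$; equivalently, an element $z = \sum_{g \in G} c_g g$ lies in $Z(P)^+ \cdot FG$ if and only if $c_{gu} = c_g$ for all $g \in G$ and all $u \in Z(P)$ (this is the same coefficient argument already used in Corollary~\ref{cor:zpdginsoc}).

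Granting this, for the first assertion I would take $y = \sum_{g \in G} a_g g \in \soc(ZFG)$ and an arbitrary $g \in G$, and write $g = hv$ with $h \in C_G(H)$ and $v \in Z(P)$ using the hypothesis $G = C_G(H)Z(P)$. For any $u \in Z(P)$ we then also have $vu \in Z(P)$, so applying Lemma~\ref{lemma:pabelianaux} twice, once to the pair $(h,v)$ and once to $(h,vu)$, gives $a_g = a_{hv} = a_h = a_{h(vu)} = a_{gu}$. By the preliminary observation this shows $y \in Z(P)^+ \cdot FG$, i.e.\ $\soc(ZFG) \subseteq Z(P)^+ \cdot FG$.

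For the ``in particular'' part I would first note that $G' \subseteq Z(P)$ forces $G = C_G(H)Z(P)$: by Remark~\ref{rem:trivianeu}\,(i) we have $G = H\,C_P(H)\,G'$, and both $H$ (being abelian) and $C_P(H)$ lie in $C_G(H)$, while $G' \subseteq Z(P)$ by assumption; hence the first part applies and yields $\soc(ZFG) \subseteq Z(P)^+ \cdot FG$. Finally, since $G' \subseteq Z(P)$, the subgroup $Z(P)$ is a disjoint union of cosets of $G'$, so $Z(P)^+ \in (G')^+ \cdot FG$ and therefore $Z(P)^+ \cdot FG \subseteq (G')^+ \cdot FG$; combining the two inclusions gives $\soc(ZFG) \subseteq (G')^+ \cdot FG$, and Lemma~\ref{lemma:socideal} shows that $\soc(ZFG)$ is an ideal in $FG$. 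Since the substantive work has already been done in Lemma~\ref{lemma:pabelianaux} and the auxiliary results preceding it, I do not expect a genuine obstacle here; the only point needing a little care is the bookkeeping of the decomposition $g = hv$ under right multiplication by $u \in Z(P)$, which works precisely because $Z(P)$ is closed under multiplication, so that $vu$ is again a legitimate argument for Lemma~\ref{lemma:pabelianaux}.
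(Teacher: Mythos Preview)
Your proof is correct and follows essentially the same route as the paper: write $g = hv$ with $h \in C_G(H)$ and $v \in Z(P)$, invoke Lemma~\ref{lemma:pabelianaux} to see that the coefficient $a_g$ equals $a_h$ and hence is constant on $gZ(P)$, then use $G' \subseteq Z(P)$ together with Lemma~\ref{lemma:socideal}. You are a bit more explicit than the paper in verifying that $G' \subseteq Z(P)$ forces $G = C_G(H)Z(P)$ via Remark~\ref{rem:trivianeu}\,(i), which the paper leaves implicit in the word ``additionally''.
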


\begin{proof}
Consider an element $y = \sum_{g \in G} a_g g \in \soc(ZFG)$. Let $g \in G$ and write $g = cz$ with $c \in C_G(H)$ and $z \in Z(P)$. By Lemma~\ref{lemma:pabelianaux}, we have $a_g = a_{cz} = a_c$. Hence $y \in Z(P)^+ \cdot FG$ follows. If additionally $G' \subseteq Z(P)$ holds, then $\soc(ZFG) \subseteq Z(P)^+ \cdot FG \subseteq (G')^+ \cdot FG$ follows, so $\soc(ZFG)$ is an ideal in $FG$ (see Lemma~\ref{lemma:socideal}).
\end{proof}

This proves the first part of Theorem~\ref{theo:c}. The next example shows that the condition $G' \subseteq Z(P)$ is not necessary for $\soc(ZFG) \trianglelefteq FG$.

\begin{Example}\label{ex:172}
	Let $F$ be an algebraically closed field of characteristic $p = 3$ and consider the group $G = \texttt{SmallGroup}(216,86)$ in GAP \cite{GAP4}. We have $G = G' \rtimes H$, where $G'$ is the extraspecial group of order~$27$ and exponent three, and $H \cong C_8$ permutes the nontrivial elements of $G'/G''$ transitively and acts on $G'' = Z(G')$ by inversion. In particular, $G'$ is nonabelian. For $h \in H$, we set $S_h \coloneqq \soc(ZFG) \cap FhG'$. Due to the $H$-grading of $FG$ introduced in Remark~\ref{rem:hgraded}, it suffices to show $S_h = F(hG')^+$ for all $h \in H$. Clearly, we have $(hG')^+ \in S_h$. The derived subgroup $G'$ decomposes into the $G$-conjugacy classes $\{1\}$, $G'' \backslash \{1\}$ and $G' \backslash G''$. For $1 \neq h \in H$, the coset $hG'$ consists of a single conjugacy class for $\ord(h) = 8$ and of two conjugacy classes for $\ord(h) \in \{2,4\}$. In the first case, we directly obtain $S_h = F(hG')^+$. In the latter case, we have $[h]^+ \cdot (G'')^+ = (hG')^+ \neq 0$, which implies $[h]^+ \notin \soc(ZFG)$ since $(G'')^+ \in J(ZFG)$ holds. Since $(hG')^+ - [h]^+ \notin \soc(ZFG)$ holds as well, $S_h = F(hG')^+$ follows. Moreover, this shows $(G'')^+ \notin \soc(ZFG)$ and hence $S_1 = F(G')^+$ follows as well. By Lemma~\ref{lemma:socideal}, $\soc(ZFG)$ is an ideal of $FG$. 
\end{Example}

\subsection{Quotient groups}\label{sec:quotientgroups}
Let $G$ be a finite group of the form $P \rtimes H$ with $P \in \Syl_p(G)$ and an abelian $p'$-group $H$. We fix a normal subgroup $N \trianglelefteq G$ with quotient group $\bar{G} \coloneqq G/N$. Our aim is to study the transition to the group algebra~$F\bar{G}$.
The image of an element $g \in G$ in $\bar
G$ will be denoted by $\bar{g}$ (similarly for subsets of $G$). Note that $\bar{G}$ is of the form $\bar{P} \rtimes \bar{H}$ with $\bar{P} \in \Syl_p(\bar{G})$ and the abelian $p'$-group~$\bar{H}$. In the following, we consider the canonical projection map 
$$\nu_N\colon FG \to F\bar{G},\ \sum_{g \in G} a_g g \to \sum_{g \in G} a_g \cdot gN,$$
together with its adjoint map $\nu_N^\ast \colon F\bar{G} \to FG$, which is defined by requiring $\lambda(\nu_N^\ast(x) y) = \bar{\lambda}(x \nu_N(y))$ for all $x\in F\bar{G}$ and $y \in FG$. Here, $\lambda$ and $\bar{\lambda}$ denote the symmetrizing linear forms of $FG$ and $F\bar{G}$ given in~\eqref{eq:symmetrizinglinearform}, respectively. It is easily verified that $\nu_N^\ast$ is given by
$$\nu_N^\ast \colon F\bar{G} \to FG,\ \sum_{gN \in \bar{G}} a_{gN} \cdot gN \mapsto \sum_{g \in G} a_{gN} \cdot g.$$
Note that $\nu_N^\ast$ is a linear map with image $N^+ \cdot FG$ and that it is injective as $\nu_N$ is surjective. 
%
\begin{Remark}\label{rem:proplambda}
	For $a \in F\bar{G},$ it is easily seen that $a \in (\bar{G}')^+ \cdot F\bar{G}$ is equivalent to $\nu_N^\ast(a) \in (G')^+ \cdot FG$. 
\end{Remark}

If $\soc(ZFG)$ is an ideal in $FG$, then $\Ann_{ZF\bar{G}}(\nu_N(J(ZFG)))$ is an ideal in $F\bar{G}$ by \cite[Proposition 2.10]{BRE221}. For $C \in \Cl(G)$ with $C \not \subseteq O_{p'}(G)$, let $b_C$ denote the associated element of $J(ZFG)$ (see Definition \ref{def:basiselements}) and consider the basis $B \coloneqq \{b_C \colon C \in \Cl(G),\, C \not \subseteq O_{p'}(G)\}$ of $J(ZFG)$ (see Theorem \ref{theo:structjzfg}). Clearly, $\nu_N(J(ZFG))$ is spanned by the images of the elements in $B$. We now derive a more convenient generating set. 
%
%
%
\begin{lemma}\label{lemma:donotlandinopstrich}
Let $C \in \Cl(G)$ be a conjugacy class with $C \not \subseteq O_{p'}(G)$. 
We have $b_C \notin \Ker(\nu_N)$ if and only if $\bar{C} \not \subseteq O_{p'}(\bar{G})$ holds and $k \coloneqq |C|/|\bar{C}|$ is not divisible by $p$. In this case, the basis element $b_{\bar{C}}$ of $J(ZF\bar{G})$ corresponding to $\bar{C} \in \Cl(\bar{G})$ is well-defined and we have $\nu_N(b_C) = k \cdot b_{\bar{C}}$.  

\end{lemma}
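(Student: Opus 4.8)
The plan is to analyze the action of $\nu_N$ on the two types of basis elements from Definition~\ref{def:basiselements} separately, comparing conjugacy classes in $G$ with their images in $\bar{G}$. First I would recall that $C^+$ maps to $\nu_N(C^+) = k \cdot \bar{C}^+$, where $k = |C|/|\bar{C}|$ is the number of $N$-cosets lying over a fixed $\bar{G}$-conjugacy-class element's preimage inside $C$; this is the standard cardinality count for the image of a class sum under a canonical projection. So the analysis hinges on whether $k$ is divisible by $p$ and on where $\bar C$ sits relative to $O_{p'}(\bar G)$.

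The first case is $p \mid |C|$, so $b_C = C^+$. If $p \mid k$, then $\nu_N(b_C) = k \cdot \bar C^+ = 0$ in characteristic $p$, so $b_C \in \Ker(\nu_N)$; this matches the claim since the ``if and only if'' condition fails. If $p \nmid k$, then $p \mid |\bar{C}|$ (as $|C| = k|\bar{C}|$ and $p \mid |C|$), so $b_{\bar{C}}$ is defined as $\bar{C}^+$, the class sum has $p$-divisible length, hence $\bar C$ cannot lie in the $p'$-group $O_{p'}(\bar G)$, and indeed $\nu_N(b_C) = k\bar{C}^+ = k\cdot b_{\bar{C}} \neq 0$. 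The second case is $p \nmid |C|$, so $b_C = C^+ - |C| \cdot g_{p'}$ with $g \in C$. Here one uses Remark~\ref{rem:conjugacyclassesph}: $C \subseteq C_G(P)$, $g_{p'} \in O_{p'}(G) \subseteq Z(G)$, $g_p \in Z(P)$, and $C = g_{p'}[g_p]$. Passing to $\bar{G}$: since $p \nmid |C|$ we get $p \nmid |\bar C|$ and $p\nmid k$, so $b_{\bar C} = \bar C^+ - |\bar C|\cdot \bar g_{p'}$ is the relevant basis element, and $\nu_N(b_C) = k\bar C^+ - |C|\cdot \bar g_{p'} = k(\bar C^+ - |\bar C|\cdot \bar g_{p'}) = k\cdot b_{\bar C}$, using $|C| = k|\bar C|$ and $\overline{g_{p'}} = \bar{g}_{p'}$ (projections preserve $p$-parts and $p'$-parts). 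This is nonzero precisely when $\bar C \not\subseteq O_{p'}(\bar G)$, i.e.\ when $\bar g_{p'}$ is not a single-element class equal to $k^{-1}|C|\cdot$ itself in a semisimple summand — more directly, $b_{\bar C}\neq 0$ iff $\bar C \not\subseteq O_{p'}(\bar G)$, which is exactly the stated condition (the $p$-divisibility condition on $k$ being automatic here).

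I would organize the write-up around the dichotomy $p \mid |C|$ versus $p \nmid |C|$, and within each establish (a) when $\nu_N(b_C)=0$, (b) that the stated condition is equivalent to $\nu_N(b_C)\neq 0$, and (c) the formula $\nu_N(b_C) = k\cdot b_{\bar C}$. The main obstacle is bookkeeping: one must check carefully that $\bar C$ being (or not being) contained in $O_{p'}(\bar G)$ matches up with the vanishing of $b_{\bar C}$ and with $p$-divisibility of $|\bar C|$, and that the two definitional branches of $b_{(-)}$ for $C$ and for $\bar C$ are compatible under $\nu_N$ — in particular that when $p\nmid|C|$ but possibly $p\mid|\bar C|$... wait, that cannot happen since $|\bar C|$ divides $|C|$; noting such divisibility facts explicitly is what keeps the case analysis honest. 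A secondary technical point is justifying $k = |C|/|\bar C|$ is an integer and equals the ``multiplicity'' with which $\nu_N$ folds $C$ onto $\bar C$, which follows because $C$ is a union of $N$-cosets' worth of preimages of elements of $\bar C$, all of equal size $|C|/|\bar C|$ by homogeneity of the $G$-action.
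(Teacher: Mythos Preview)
Your proposal is correct and follows essentially the same approach as the paper: split into the two cases of Definition~\ref{def:basiselements}, compute $\nu_N(b_C)$ directly from $\nu_N(C^+)=k\cdot\bar C^+$, and compare with $b_{\bar C}$. The only cosmetic difference is in the second case ($p\nmid|C|$): the paper argues the converse by writing $C=g_{p'}D$ with $D\subseteq Z(P)$ a class of $p$-elements and showing that $\bar C\subseteq O_{p'}(\bar G)$ forces $\bar D=\{1\}$ and hence $\nu_N(b_C)=0$, whereas you directly observe that $\bar C^+-|\bar C|\,\bar g_{p'}$ vanishes precisely when $\bar C\subseteq O_{p'}(\bar G)$ (since then $|\bar C|=1$ and $\bar g=\bar g_{p'}$); these are the same computation viewed from opposite ends, and your observation that $p\nmid k$ is automatic in this case is a small simplification over the paper's phrasing.
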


\begin{proof}
Observe that $\bar{C}$ is indeed a conjugacy class of $\bar{G}$ and that $\nu_N(C^+) = k \cdot \bar{C}^+$ with $k \coloneqq |C|/|\bar{C}|$ holds. Suppose first that $p$ divides $|C|$, so $b_C = C^+$ holds. Then $\nu_N(b_C) \neq 0$ is equivalent to $k \not\equiv 0 \pmod{p}$, and in this case we have $|\bar{C}| \equiv 0 \pmod{p}$. Since $O_{p'}(\bar{G}) \subseteq Z(\bar{G})$ holds, this implies
$\bar{C} \not\subseteq O_{p'}(\bar{G})$. Moreover, we have $b_{\bar{C}} =
\bar{C}^+$ and thus $\nu_N(b_C) = k \cdot b_{\bar{C}}$.
\bigskip

It remains to consider the case $C \subseteq C_G(P)$. There, we have $\bar{C}
\subseteq C_{\bar{G}}({\bar{P}})$. If $\bar{C} \not\subseteq O_{p'}(\bar{G})$ holds,
then $b_{\bar{C}}$ is defined, and we have $b_C = C^+ - |C| \cdot g_{p'}$ and
$b_{\bar{C}} = \bar{C}^+ - |\bar{C}| \cdot {\bar g}_{p'}$ for $g \in C$. This shows
that $\nu_N(b_C) = k \cdot b_{\bar{C}}$ holds. If, in addition, $k \not\equiv 0 \pmod{p}$, then $\nu_N(b_C) \neq 0$ follows. Suppose conversely that $\nu_N(b_C) \neq 0$ holds. We write $C = g_{p'}D$ for $g_{p'} \in
O_{p'}(G)$ and $D \in \Cl(G)$ with $D \subseteq Z(P)$ (see Remark~\ref{rem:conjugacyclassesph}). Assume that ${\bar C}
\subseteq O_{p'}({\bar G})$ holds. Then we have ${\bar D} = {\bar g}_{p'}^{-1} {\bar C}
\subseteq O_{p'}({\bar G})$ due to ${\bar g}_{p'} \in O_{p'}({\bar G})$. As $D$ consists of $p$-elements, we must have ${\bar D} = \{1\}$, which yields the contradiction $\nu_N(b_C) = \nu_N(g_{p'}D^+ - |D|\cdot g_{p'}) = 0$. This shows that ${\bar C} \not\subseteq O_{p'}({\bar G})$ holds. Hence we have
$\nu_N(b_C) = k \cdot b_{\bar C}$, so that $k \not\equiv 0 \pmod{p}$. 
\end{proof}


\begin{definition}\label{def:abar} \label{def:cln}
		Set 
		$\DCL{G}{N} \coloneqq \{C \in \Cl(G) \colon C \not \subseteq O_{p'}(G) \text{ and } b_C \notin \Ker(\nu_N)\}$ and let $$\SDCL{G}{N} \coloneqq \left\{b_C \colon C \in \DCL{G}{N}\right\}$$ be the set of corresponding basis elements of $J(ZFG)$ (see Definition \ref{def:basiselements}). By $\IDCL{G}{N} \subseteq \Cl(\bar{G}),$ we denote the set of images of the conjugacy classes in $\DCL{G}{N}$ and set $$\SIDCL{G}{N} \coloneqq \left\{b_{\bar{C}} \colon \bar{C} \in \IDCL{G}{N}\right\},$$ where $b_{\bar{C}}$ denotes the basis element of $J(ZF\bar{G})$ corresponding to $\bar{C}$.
\end{definition}

If $N$ is a $p$-group, the $p'$-conjugacy classes of length divisible by $p$ in $\DCL{G}{N}$ can be easily characterized:

\begin{lemma}
Consider a normal $p$-subgroup $N$ of $G$
and let $C \not \subseteq C_G(P)$ be a $p'$-conjugacy class. Then we have $C\in \DCL{G}{N}$ if and only if $C \subseteq C_G(N)$ holds.
\end{lemma}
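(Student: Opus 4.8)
The claim is an equivalence about a $p'$-conjugacy class $C$ with $C \not\subseteq C_G(P)$, namely $C \in \DCL{G}{N}$ iff $C \subseteq C_G(N)$. Recall that $\DCL{G}{N}$ consists of conjugacy classes $C$ with $C \not\subseteq O_{p'}(G)$ and $b_C \notin \Ker(\nu_N)$. Since $C \not\subseteq C_G(P)$, the class $C$ certainly has length divisible by $p$ (because if $p \nmid |C|$ then $P \subseteq C_G(g)$ for $g \in C$, forcing $C \subseteq C_G(P)$). Hence $C \not\subseteq O_{p'}(G)$ is automatic, $b_C = C^+$, and by Lemma~\ref{lemma:donotlandinopstrich} the condition $b_C \notin \Ker(\nu_N)$ is equivalent to $\bar C \not\subseteq O_{p'}(\bar G)$ together with $p \nmid k$ where $k = |C|/|\bar C|$. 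The plan is therefore to show that, for a $p'$-class $C$ outside $C_G(P)$, these two conditions together are equivalent to $C \subseteq C_G(N)$.

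\textbf{Key steps.}
First I would handle the implication $C \subseteq C_G(N) \Rightarrow C \in \DCL{G}{N}$. If $C \subseteq C_G(N)$, then conjugation by $N$ acts trivially on $C$, so the map $G \to \bar G$ restricted to $C$ is a bijection onto $\bar C$; thus $k = |C|/|\bar C| = 1$, which is not divisible by $p$. It remains to check $\bar C \not\subseteq O_{p'}(\bar G)$: since $|\bar C| = |C|$ is divisible by $p$ and $O_{p'}(\bar G) \subseteq Z(\bar G)$ (a consequence of Remark~\ref{rem:structureCG(P)} applied to $\bar G = \bar P \rtimes \bar H$), a class contained in $O_{p'}(\bar G)$ would be a singleton, contradicting $p \mid |\bar C|$. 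Hence $C \in \DCL{G}{N}$.

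For the converse, suppose $C \not\subseteq C_G(N)$; I want to show $b_C \in \Ker(\nu_N)$, i.e. $\nu_N(C^+) = 0$, i.e. $p \mid k$. This is exactly the situation addressed by Lemma~\ref{lemma:312general}: applying it to the normal $p$-subgroup $N$ with the class $C$ (which satisfies $C \not\subseteq C_G(N)$), we get $\nu_N(C^+) = 0$ directly (indeed $\nu_M(C^+) = 0$ for the associated subgroup $M$, and $\nu_N$ factors through, or simply the final assertion of that lemma gives $\nu_N(C^+) = 0$ and $N^+ C^+ = 0$). Therefore $b_C = C^+ \in \Ker(\nu_N)$, so $C \notin \DCL{G}{N}$. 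Combining the two directions gives the equivalence.

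\textbf{Main obstacle.}
The only subtle point is making sure the reductions are airtight: that $C \not\subseteq C_G(P)$ genuinely forces $p \mid |C|$ (so that $b_C = C^+$ and the first condition $C \not\subseteq O_{p'}(G)$ is free), and that in the forward direction the two hypotheses $\bar C \not\subseteq O_{p'}(\bar G)$ and $p \nmid k$ can be leveraged to conclude $C \subseteq C_G(N)$ — but in fact the contrapositive route via Lemma~\ref{lemma:312general} bypasses any delicate orbit-counting, since that lemma already packages the hard $p$-group computation. So the proof is essentially a matter of invoking Lemma~\ref{lemma:donotlandinopstrich} and Lemma~\ref{lemma:312general} correctly, with a one-line argument for each direction; there is no serious technical hurdle beyond bookkeeping of which classes lie in $O_{p'}$.
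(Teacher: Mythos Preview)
Your overall approach matches the paper's: invoke Lemma~\ref{lemma:312general} for the direction $C \not\subseteq C_G(N) \Rightarrow \nu_N(C^+)=0$, and for the converse show that $|C|=|\bar C|$ so that $\nu_N(C^+)=\bar C^+\neq 0$. There is, however, a gap in your justification of $k=1$. You write that ``conjugation by $N$ acts trivially on $C$, so the map $G\to\bar G$ restricted to $C$ is a bijection onto $\bar C$''; but trivial conjugation action of $N$ on $C$ does \emph{not} by itself force injectivity of $C\to\bar C$. For a counterexample, take $G$ extraspecial of order $p^3$, $N=Z(G)$, and $C$ any non-central class: then $N$ centralises $C$, yet $|C|=p$ and $|\bar C|=1$.

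What is missing is precisely the $p'$-hypothesis on $C$. If $c,c'\in C$ lie in the same $N$-coset, say $c'=cn$ with $n\in N$, then $c\in C_G(N)$ gives $[c,n]=1$, so $c'=cn$ is the product of a commuting $p'$-element and $p$-element; since $c'$ is itself a $p'$-element, its $p$-part $n$ must be trivial. This yields the bijection and $k=1$. The paper reaches the same conclusion by quoting \cite[Theorem~5.3.15]{GOR68}, which gives $C_{G/N}(hN)=C_G(h)N/N$ for a $p'$-element $h$ and a normal $p$-subgroup $N$, whence $|\bar C|=|C|$ directly. Either route works, but your stated reason does not.
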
 

\begin{proof}
	If $C \not \subseteq C_G(N)$ holds, we have $\nu_N(b_{C}) = \nu_N(C^+) = 0$ by Lemma~\ref{lemma:312general}, so $C \notin \DCL{G}{N}$. 
	Now let $h \in C
	\subseteq C_G(N)$. Since $h$ is a $p'$-element, \cite[Theorem 5.3.15]{GOR68} implies $C_{G/N}(hN) = C_G(h)N/N = C_G(h)/N$ and hence $|\bar{C}| = |G/N:C_{G/N}(hN)| = |G:C_G(h)| = 
	|C|$. Thus we have $\nu_N(b_C) = \nu_N(C^+) =
	{\bar C}^+ \neq 0$, which yields $C \in \DCL{G}{N}$.
\end{proof}

Now let $N$ again be an arbitrary normal subgroup of $G$. We obtain the following necessary condition for $\soc(ZFG) \trianglelefteq FG$:

\begin{theorem}\label{theo:anndecconjconstantcoeffs}
We have 
$$\Ann_{ZF\bar{G}}(\nu_N(J(ZFG))) = \Ann_{ZF\bar{G}}\bigl(\SIDCL{G}{N}\bigr) \eqqcolon A.$$ If $\soc(ZFG)$ is an ideal of $FG$, we have
$A \subseteq (\bar{G}')^+ \cdot F\bar{G}$.
\end{theorem}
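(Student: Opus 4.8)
The plan is to establish the claimed equality of annihilators first, and then the inclusion $A \subseteq (\bar{G}')^+ \cdot F\bar{G}$ as a consequence of $\soc(ZFG) \trianglelefteq FG$ together with Remark~\ref{rem:proplambda} and the adjointness of $\nu_N$ and $\nu_N^\ast$.

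\textbf{Step 1: the equality of annihilators.} By Theorem~\ref{theo:structjzfg}, the set $B = \{b_C \colon C \in \Cl(G),\, C \not\subseteq O_{p'}(G)\}$ is an $F$-basis of $J(ZFG)$, so $\nu_N(J(ZFG))$ is spanned by $\{\nu_N(b_C) \colon C \in \Cl(G),\, C \not\subseteq O_{p'}(G)\}$. By Lemma~\ref{lemma:donotlandinopstrich}, for each such $C$ we have $\nu_N(b_C) = 0$ unless $C \in \DCL{G}{N}$, in which case $\nu_N(b_C) = k \cdot b_{\bar{C}}$ with $k \coloneqq |C|/|\bar{C}|$ coprime to $p$; since $k \neq 0$ in $F$, this means the nonzero images $\nu_N(b_C)$ are, up to nonzero scalars, exactly the elements of $\SIDCL{G}{N}$. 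Hence $\nu_N(J(ZFG))$ and $\SIDCL{G}{N}$ span the same $F$-subspace of $F\bar{G}$ (indeed $\nu_N(J(ZFG)) = J(ZF\bar{G})$ need not hold, but the spans of the two generating sets coincide), and therefore they have the same annihilator in $ZF\bar{G}$. This gives $\Ann_{ZF\bar{G}}(\nu_N(J(ZFG))) = \Ann_{ZF\bar{G}}(\SIDCL{G}{N}) = A$.

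\textbf{Step 2: the inclusion $A \subseteq (\bar{G}')^+ \cdot F\bar{G}$.} Assume $\soc(ZFG) \trianglelefteq FG$. As noted just before Definition~\ref{def:abar} (citing \cite[Proposition 2.10]{BRE221}, applied to the quotient algebra $F\bar{G} \cong FG/\Ker(\nu_N)$), the space $A = \Ann_{ZF\bar{G}}(\nu_N(J(ZFG)))$ is an ideal in $F\bar{G}$. I then want to transport this back to $FG$ via $\nu_N^\ast$. Concretely, take $a \in A$ and consider $\nu_N^\ast(a) \in N^+ \cdot FG$. First, $\nu_N^\ast(a)$ is central in $FG$: for $g \in G$, using the adjointness relation $\lambda(\nu_N^\ast(a)\, y) = \bar{\lambda}(a\, \nu_N(y))$ and the fact that $a \in ZF\bar{G}$, one checks $\lambda(\nu_N^\ast(a)\, g\, y\, g^{-1}) = \lambda(\nu_N^\ast(a)\, y)$ for all $y$, whence $\nu_N^\ast(a)$ is fixed under conjugation (the symmetrizing form separates $FG$). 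Second, I claim $\nu_N^\ast(a) \in \soc(ZFG)$: since $\soc(ZFG) = \Ann_{ZFG}(J(ZFG))$, it suffices to show $\nu_N^\ast(a)\, b_C = 0$ for every $b_C \in B$, and this again follows by adjointness — $\lambda\bigl(\nu_N^\ast(a)\, b_C\, y\bigr) = \bar{\lambda}\bigl(a\, \nu_N(b_C\, y)\bigr) = \bar{\lambda}\bigl(a\, \nu_N(b_C)\, \nu_N(y)\bigr) = 0$ because $a$ annihilates $\nu_N(b_C) \in \nu_N(J(ZFG))$ and $a$ is central. Hence $\nu_N^\ast(a) \in \soc(ZFG)$, and since $\soc(ZFG) \trianglelefteq FG$, Lemma~\ref{lemma:socideal} gives $\nu_N^\ast(a) \in (G')^+ \cdot FG$. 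By Remark~\ref{rem:proplambda}, this is equivalent to $a \in (\bar{G}')^+ \cdot F\bar{G}$, and since $a \in A$ was arbitrary, $A \subseteq (\bar{G}')^+ \cdot F\bar{G}$ follows.

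\textbf{Main obstacle.} The routine part is Step~1. The care is needed in Step~2, specifically in the bookkeeping with the adjoint map $\nu_N^\ast$: one must verify cleanly that $\nu_N^\ast$ carries the annihilator property and centrality across, using only the defining adjointness identity and non-degeneracy of $\lambda$, and that $\nu_N^\ast(a) \in \soc(ZFG)$ rather than merely $\nu_N^\ast(a)\,J(ZFG) = 0$ in $FG$ in some weaker sense — i.e.\ one should double-check that annihilating all $b_C$ (a spanning set of $J(ZFG)$) really does place $\nu_N^\ast(a)$ in $\soc(ZFG) = \Ann_{ZFG}(J(ZFG))$, which it does since the $b_C$ span. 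A small alternative to the adjoint-map route, should it prove cumbersome, is to argue directly: $\soc(ZFG) \trianglelefteq FG$ implies (by \cite[Proposition 2.10]{BRE221}) that $A$ is an ideal of $F\bar{G}$ whose defining generators $\SIDCL{G}{N}$ are images of elements of $J(ZFG)$, and then invoke the quotient-group analogue of Lemma~\ref{lemma:socideal} for $\bar{G}$ — but the adjoint-map computation is the cleanest self-contained path given what is already in the excerpt.
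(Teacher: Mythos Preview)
Your proof is correct. Step~1 matches the paper's argument exactly. Step~2 takes a genuinely different route: the paper simply observes that $A$ is an ideal of $F\bar{G}$ (by \cite[Proposition~2.10]{BRE221}), then applies \cite[Lemma~2.1]{KUL20} to obtain $K(F\bar{G}) \cdot A = 0$, and concludes $A \subseteq (\bar{G}')^+ \cdot F\bar{G}$ exactly as in the proof of Lemma~\ref{lemma:socideal} --- in other words, it runs the argument of Lemma~\ref{lemma:socideal} directly in $F\bar{G}$. Your approach instead pulls each $a \in A$ back to $\nu_N^\ast(a) \in \soc(ZFG)$ via the adjoint map, applies Lemma~\ref{lemma:socideal} in $FG$, and pushes forward with Remark~\ref{rem:proplambda}. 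This is precisely the mechanism behind \cite[Remark~2.9]{BRE221}, which the paper only invokes later (in Lemma~\ref{lemma:correspondencecenter}); so you have essentially unfolded that remark by hand. Your route is more self-contained --- it never actually uses that $A$ is an ideal of $F\bar{G}$, so the citation of \cite[Proposition~2.10]{BRE221} in your Step~2 is superfluous --- whereas the paper's route is shorter but leans on two external references. The ``small alternative'' you sketch at the end is in fact the paper's own argument.
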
 

\begin{proof}
Clearly, the elements $\nu_N(b_C)$ with $C \in \DCL{G}{N}$ span $\nu_N(J(ZFG))$. For $C \in \DCL{G}{N}$ and $y \in F\bar{G}$, we have $y \cdot \nu_N(b_C) = 0$ if and only if $y \cdot b_{\bar{C}} = 0$ holds (see Lemma~\ref{lemma:donotlandinopstrich}). This implies $A = \Ann_{ZF\bar{G}}(\nu_N(J(ZFG)))$. Now assume that $\soc(ZFG)$ is an ideal in $FG$. By \cite[Proposition 2.10]{BRE221}, $A$ is an ideal in $F\bar{G}$, so by \cite[Lemma 2.1]{KUL20}, we have $K(F\bar{G}) \cdot A = 0$. As in the proof of Lemma \ref{lemma:socideal}, this implies $A \subseteq (\bar{G}')^+ \cdot F\bar{G}$.
\end{proof}

As a first application, we give an alternative proof of the following special case of \cite[Proposition 2.10]{BRE221}:

\begin{Corollary}\label{cor:socquotient}
Let $\soc(ZFG)$ be an ideal of $FG$. Then $\soc(ZF\bar{G}) \trianglelefteq F\bar{G}$ holds.
\end{Corollary}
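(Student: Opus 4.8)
The plan is to derive $\soc(ZF\bar{G}) \trianglelefteq F\bar{G}$ from Theorem~\ref{theo:anndecconjconstantcoeffs}, which gives $A = \Ann_{ZF\bar{G}}(\nu_N(J(ZFG))) \subseteq (\bar{G}')^+ \cdot F\bar{G}$ whenever $\soc(ZFG)$ is an ideal. By Lemma~\ref{lemma:socideal} applied to $\bar{G}$, it suffices to show $\soc(ZF\bar{G}) \subseteq (\bar{G}')^+ \cdot F\bar{G}$. Since $A \subseteq (\bar{G}')^+ \cdot F\bar{G}$ is already available, the whole argument reduces to the inclusion $\soc(ZF\bar{G}) \subseteq A$, i.e.\ to showing that every element of $\soc(ZF\bar{G})$ annihilates $\nu_N(J(ZFG))$.

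First I would recall that $\nu_N \colon FG \to F\bar{G}$ is a surjective algebra homomorphism, so it maps $J(FG)$ into $J(F\bar{G})$; intersecting with centers, $\nu_N$ sends $J(ZFG) = J(FG) \cap ZFG$ into $J(F\bar{G}) \cap ZF\bar{G} = J(ZF\bar{G})$. (One must be slightly careful: $\nu_N(ZFG) \subseteq ZF\bar{G}$ because a homomorphic image of a central element is central, and $J(ZF\bar{G}) = J(F\bar{G}) \cap ZF\bar{G}$ by \cite[Theorem 1.10.8]{LIN18}.) Hence $\nu_N(J(ZFG)) \subseteq J(ZF\bar{G})$. Then by definition of the socle of the center as the annihilator, $\soc(ZF\bar{G}) = \Ann_{ZF\bar{G}}(J(ZF\bar{G}))$ (see \cite[Theorem 1.10.22]{LIN18}), so every element of $\soc(ZF\bar{G})$ annihilates the subspace $J(ZF\bar{G})$, and a fortiori annihilates the smaller subspace $\nu_N(J(ZFG))$. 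This gives exactly $\soc(ZF\bar{G}) \subseteq \Ann_{ZF\bar{G}}(\nu_N(J(ZFG))) = A$.

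Combining the two inclusions, $\soc(ZF\bar{G}) \subseteq A \subseteq (\bar{G}')^+ \cdot F\bar{G}$, and Lemma~\ref{lemma:socideal} (for the group $\bar{G}$) then yields that $\soc(ZF\bar{G})$ is an ideal of $F\bar{G}$. The only mildly delicate point — really the crux — is the containment $\nu_N(J(ZFG)) \subseteq J(ZF\bar{G})$; everything else is a formal manipulation of annihilators. An alternative route avoiding even this would be to note that $F\bar{G}$ is a quotient algebra of $FG$ and invoke \cite[Proposition 2.10]{BRE221} directly, but since the corollary is explicitly billed as giving an alternative proof of that special case, I would instead argue through Theorem~\ref{theo:anndecconjconstantcoeffs} as above, using $\soc(ZF\bar{G}) \subseteq A$ together with $A \subseteq (\bar{G}')^+ \cdot F\bar{G}$.
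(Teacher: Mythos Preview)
Your proof is correct and follows essentially the same route as the paper's. The paper argues via the inclusion $\SIDCL{G}{N} \subseteq J(ZF\bar{G})$ (these are, by construction, basis elements of $J(ZF\bar{G})$) to obtain $\soc(ZF\bar{G}) = \Ann_{ZF\bar{G}}(J(ZF\bar{G})) \subseteq \Ann_{ZF\bar{G}}(\SIDCL{G}{N}) = A$, whereas you obtain the same inclusion $\soc(ZF\bar{G}) \subseteq A$ from the equivalent observation $\nu_N(J(ZFG)) \subseteq J(ZF\bar{G})$; both then conclude via Theorem~\ref{theo:anndecconjconstantcoeffs} and Lemma~\ref{lemma:socideal}.
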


\begin{proof}
Since $\SIDCL{G}{N}$ is a subset of $J(ZF\bar{G}),$ Theorem \ref{theo:anndecconjconstantcoeffs} yields $$\soc(ZF\bar{G}) = \Ann_{ZF\bar{G}}{J(ZF\bar{G})} \subseteq \Ann_{ZF\bar{G}}\bigl(\SIDCL{G}{N}\bigr) \subseteq (\bar{G}')^+ \cdot F\bar{G}$$ and we obtain $\soc(ZF\bar{G}) \trianglelefteq F\bar{G}$ by Lemma \ref{lemma:socideal}. 
\end{proof}

\subsection{Central products}\label{sec:centralproducts}
Let $G$ be a finite group. We consider the question when $\soc(ZFG)$ is an ideal of $FG$ in case that $G = G_1 * G_2$ is a central product of two subgroups $G_1$ and $G_2$. Central products will play an important role throughout our investigation, for instance in the decomposition of $G$ given in Theorem~\ref{theo:decompositionofp}.

\begin{theorem}\label{theo:centralproduct}
Let $G = G_1 * G_2$ be the central product of $G_1$ and $G_2$. Then $\soc(ZFG) \trianglelefteq FG$ is equivalent to $\soc(ZFG_i) \trianglelefteq FG_i$ for $i = 1, 2$. 
\end{theorem}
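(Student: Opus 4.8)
The plan is to exploit the tensor-product structure of group algebras of central products together with the criterion in Lemma~\ref{lemma:socideal}. Write $Z \coloneqq G_1 \cap G_2$, a central subgroup of $G$ that embeds centrally into both $G_1$ and $G_2$. Then $FG$ is a quotient of $FG_1 \otimes_F FG_2$; more precisely, $FG \cong (FG_1 \otimes_F FG_2)/I$, where $I$ is the ideal generated by the elements $z \otimes 1 - 1 \otimes z$ for $z \in Z$ (identified with its images in $G_1$ and $G_2$). The first step is to record the behaviour of the relevant invariants under this identification: $Z(FG)$ is the image of $ZFG_1 \otimes_F ZFG_2$, and $G' = G_1' G_2'$ so $(G')^+ \cdot FG$ is the image of $(G_1')^+ \otimes (G_2')^+ \cdot (FG_1 \otimes FG_2)$ (up to the identification, since $G_i'$ meets $Z$ in the same way from both sides). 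The key algebraic input will be a description of $\soc(ZFG)$ in terms of $\soc(ZFG_1)$ and $\soc(ZFG_2)$.

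Concretely, I would argue as follows. By Corollary~\ref{cor:gph}, if any of the three socle conditions holds then the relevant group is of the form $P \rtimes H$ with $H$ abelian; combining this across $G, G_1, G_2$ (a central product of such groups is again such a group, and conversely) lets me reduce to that setting and, via Remark~\ref{rem:blocks} and the isomorphism of principal blocks, further to $O_{p'}(G) = 1$ if convenient — though I expect this reduction to be optional rather than essential. For the main implication, I want: $\soc(ZFG)$ is the image in $FG$ of $\soc(ZFG_1) \otimes_F \soc(ZFG_2)$. One direction of this, namely $\soc(ZFG_1)\otimes\soc(ZFG_2) \subseteq \soc(ZFG_1 \otimes FG_2) = \soc(FG_1\otimes FG_2) \cap Z(\cdots)$, is the standard fact that the socle of a tensor product of symmetric algebras is the tensor product of the socles; passing to the quotient $FG$ and intersecting with the center is then bookkeeping using $J(ZFG)$ being the image of $J(ZFG_1)\otimes ZFG_2 + ZFG_1 \otimes J(ZFG_2)$ modulo $I$ and $\soc(ZFG) = \Ann_{ZFG}(J(ZFG))$. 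The subtlety is that quotienting by $I$ can in principle enlarge or shrink annihilators, so I would instead work directly: show $\soc(ZFG) = \Ann_{ZFG}(J(ZFG))$ and compute this annihilator using that $ZFG$ is spanned by (images of) $C_1^+ \otimes C_2^+$ with $C_i \in \Cl(G_i)$, and $J(ZFG)$ is spanned by the corresponding basis elements à la Theorem~\ref{theo:structjzfg}.

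Granting the description $\soc(ZFG) = \overline{\soc(ZFG_1)\otimes_F \soc(ZFG_2)}$, the theorem follows from Lemma~\ref{lemma:socideal}. If $\soc(ZFG_i)\trianglelefteq FG_i$ for $i=1,2$, then $\soc(ZFG_i)\subseteq (G_i')^+\cdot FG_i$, so $\soc(ZFG_1)\otimes\soc(ZFG_2) \subseteq (G_1')^+ FG_1 \otimes (G_2')^+ FG_2$, whose image in $FG$ lies in $(G_1'G_2')^+ \cdot FG = (G')^+\cdot FG$; hence $\soc(ZFG)\trianglelefteq FG$. Conversely, if $\soc(ZFG)\trianglelefteq FG$, I want to deduce $\soc(ZFG_i)\subseteq (G_i')^+ FG_i$. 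For this I would use the projection-type map: view $FG_1$ inside $FG$ and apply a suitable $F$-linear retraction $FG \to FG_1$ (e.g. the one sending $g_1 g_2 \mapsto g_1 \cdot \lambda_{G_2}(\text{coset part})$, or more robustly the map induced by collapsing $G_2$ onto $Z$ and then $Z \hookrightarrow G_1$), show it carries $\soc(ZFG)$ into $\soc(ZFG_1)$ and $(G')^+\cdot FG$ into $(G_1')^+\cdot FG_1$, and surjects onto $\soc(ZFG_1)$. Then $\soc(ZFG_1)\subseteq (G_1')^+\cdot FG_1$, giving $\soc(ZFG_1)\trianglelefteq FG_1$, and symmetrically for $G_2$.

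The main obstacle I anticipate is the precise identification $\soc(ZFG) = \overline{\soc(ZFG_1)\otimes_F\soc(ZFG_2)}$ — keeping careful track of the ideal $I = \langle z\otimes 1 - 1\otimes z\rangle$ and verifying that forming annihilators commutes with passing to this quotient in the direction I need. A clean way around this may be to avoid the tensor language where possible and instead work with the conjugacy-class-sum bases directly: conjugacy classes of $G = G_1 * G_2$ are products $C_1 C_2$ of classes of $G_1$ and $G_2$ (well-defined modulo the central identification), and the symmetrizing form $\lambda$ on $FG$ restricts compatibly, so the criterion of \cite[Lemma 2.1]{KUL20} and Lemma~\ref{lemma:socideal} can be checked coordinate-wise. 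I would also need the easy observation that $C_{G}(G') $, $Z(P)$ and the data feeding Corollary~\ref{cor:zpdginsoc} factor through the central product, so that none of the auxiliary hypotheses create friction. The remaining steps are routine verifications.
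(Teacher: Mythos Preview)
Your forward direction is close in spirit to the paper's: both pass through $FG_1 \otimes_F FG_2 \cong F(G_1 \times G_2)$. The paper, however, avoids your ``key identification'' $\soc(ZFG) = \overline{\soc(ZFG_1)\otimes_F\soc(ZFG_2)}$ entirely: it simply invokes \cite[Proposition~1.9]{BRE221} to get $\soc(ZF(G_1\times G_2))\trianglelefteq F(G_1\times G_2)$ and then applies Corollary~\ref{cor:socquotient} to the quotient $G_1\times G_2 \twoheadrightarrow G$. Your identification is neither proved nor obviously true (quotienting can genuinely enlarge the socle of the center), so your forward direction as written is incomplete, though easily repaired by the paper's route.

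The genuine gap is in your reverse direction. There is no well-defined $F$-linear retraction $FG \to FG_1$ with the properties you want: in a central product the factorisation $g = g_1 g_2$ is only determined modulo $Z = G_1 \cap G_2$, so your map ``$g_1 g_2 \mapsto g_1 \cdot \lambda_{G_2}(\ldots)$'' is ill-defined, and ``collapsing $G_2$ onto $Z$'' lands in $F[G_1/Z]$, not in $FG_1$. Even granting some map, the surjectivity claim onto $\soc(ZFG_1)$ is unfounded: elements of $\soc(ZFG_1)$ do not in general lie in $\soc(ZFG)$, because they need not annihilate the part of $J(ZFG)$ coming from $J(ZFG_2)$.

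The paper's argument replaces your retraction by the quotient $\nu_{G_2}\colon FG \to F[G/G_2] \cong F[G_1/Z]$ together with its adjoint $\nu_1^\ast$. Two ingredients make this work, and both are missing from your sketch. First, the reduction to $O_{p'}(G)=1$ is \emph{not} optional: it forces $Z \subseteq Z(P)$ to be a $p$-group, and then Corollary~\ref{cor:zpdginsoc} yields the crucial containment $\soc(ZFG_1) \subseteq Z^+ \cdot FG_1 = \Im(\nu_1^\ast)$, which is exactly what lets you pull the problem back from $F[G_1/Z]$ to $FG_1$. Second, one needs $\nu_{G_2}(J(ZFG_1)) = \nu_{G_2}(J(ZFG))$, which follows from $\nu_{G_2}(ZFG_1) = \nu_{G_2}(ZFG)$; this holds because $ZFG = ZFG_1 \cdot ZFG_2$ in a central product and $\nu_{G_2}$ collapses $ZFG_2$. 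With these in hand, Theorem~\ref{theo:anndecconjconstantcoeffs} gives $\Ann_{ZF[G_1/Z]}(\nu_1(J(ZFG_1))) \subseteq ([G_1/Z]')^+ \cdot F[G_1/Z]$, and then \cite[Remark~2.9]{BRE221} and Remark~\ref{rem:proplambda} push this back through $\nu_1^\ast$ to conclude $\soc(ZFG_1) \subseteq (G_1')^+ \cdot FG_1$.
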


\begin{proof}
First assume that $\soc(ZFG_i)$ is an ideal in $FG_i$ for $i =1,2$. By \cite[Proposition 1.9]{BRE221}, this implies $$\soc(Z(FG_1 \otimes_F FG_2)) \trianglelefteq FG_1 \otimes_F FG_2.$$ Since $F(G_1 \times G_2) \cong FG_1 \otimes_F FG_2$ holds, this yields $\soc(ZF(G_1 \times G_2)) \trianglelefteq F(G_1 \times G_2)$. The group $G$ is isomorphic to a quotient group of $G_1 \times G_2$, so $\soc(ZFG)$ is an ideal in $FG$ by Corollary~\ref{cor:socquotient}.
\bigskip

Now assume conversely that $\soc(ZFG)$ is an ideal of $FG$. By Corollary~\ref{cor:gph}, $G$ is of the form $P \rtimes H$ with $P \in \Syl_p(G)$ and an abelian $p'$-group $H$. First suppose that $O_{p'}(G) = 1$ holds. Then $Z \coloneqq G_1 \cap G_2 \subseteq Z(G)\subseteq C_G(P) = Z(P)$ is a $p$-group.  We consider the canonical projection $\nu \coloneqq \nu_{G_2} \colon FG \to F[G/G_2]$. By Theorem~\ref{theo:anndecconjconstantcoeffs}, we have 
\begin{equation}\label{eq:anncentralproduct}
\Ann_{ZF[G/G_2]}\bigl(\nu(J(ZFG))\bigr) \subseteq ([G/G_2]')^+ \cdot F[G/G_2].
\end{equation}

Note that there is a canonical isomorphism $G_1/Z \cong G/G_2$. Furthermore, we have $ZFG_1 \subseteq ZFG$ and $\nu(ZFG_1) = \nu(ZFG)$, so also $\nu(J(ZFG_1)) = \nu(J(ZFG))$ holds. Hence we have
$$\Ann_{ZF[G_1/Z]}\bigl(\nu_1 (J(ZFG_1))\bigr) \subseteq ([G_1/Z]')^+ \cdot F[G_1/Z],$$
where $\nu_1 \colon FG_1 \to F[G_1/Z]$ denotes the canonical projection. Let $x_1 \in \soc(ZFG_1)$ and observe that $G_1'$ is a $p$-group.  By Corollary~\ref{cor:zpdginsoc}, we have $x_1 \in Z^+ \cdot FG_1 = \nu_1^\ast(F[G_1/Z])$. Let $y_1 \in F[G_1/Z]$ with $x_1 = \nu_1^\ast(y_1)$. Then \cite[Remark 2.9]{BRE221} yields
$$y_1 \in \Ann_{ZF[G_1/Z]}(\nu_1(J(ZFG_1))) \subseteq ([G_1/Z]')^+ \cdot F[G_1/Z].$$
By Remark \ref{rem:proplambda}, this yields $x_1 \in (G_1')^+ \cdot FG_1$ and hence $\soc(ZFG_1)$ is an ideal in $FG_1$ (see Lemma~\ref{lemma:socideal}). By symmetry, we obtain $\soc(ZFG_2) \trianglelefteq FG_2$. 
\bigskip

Now we consider the general case. For $\bar{G} \coloneqq G/O_{p'}(G)$, we have $\bar{G} = \bar{G}_1 * \bar{G}_2$ with $\bar{G}_i \coloneqq G_i O_{p'}(G)/O_{p'}(G)$ ($i = 1,2$). Note that $\bar{G}_i \cong G_i/O_{p'}(G) \cap G_i \cong G_i/O_{p'}(G_i)$ follows since $O_{p'}(G) \cap G_i = O_{p'}(G_i)$ holds. By the above, we obtain $\soc(ZF\bar{G}_i) \trianglelefteq F\bar{G}_i$. Since $G'$ is a $p$-group, also $G_1'$ and $G_2'$ are $p$-groups. Lemma~\ref{lemma:opstrich} then yields $\soc(ZFG_i) \trianglelefteq FG_i$ for $i = 1,2$.
\end{proof}

\begin{Remark}
For $G \cong G_1 \times G_2$, the statement of Theorem \ref{theo:centralproduct} is a special case of \cite[Proposition~1.9]{BRE221}.
\end{Remark}

\section{Groups of prime power order}\label{sec:pgroups}
Let $F$ be an algebraically closed field of characteristic $p > 0$. In this section, we classify the finite $p$-groups~$G$ for which $\soc(ZFG)$ is an ideal in $FG$ (see Theorem~\ref{theo:pgroups}). Additionally, these results will be generalized to arbitrary finite groups (see Theorem~\ref{theo:c}). First we prove that the property $\soc(ZFG) \trianglelefteq FG$ is preserved under isoclinism (see Section \ref{sec:isoclinism}). Subsequently, we distinguish the cases $p \geq 3$ (see Section \ref{sec:oddcharacteristic}) and $p  = 2$ (see Section \ref{sec:evencharacteristic}).
 
\subsection{Isoclinism}\label{sec:isoclinism}
Let $G$ be a finite $p$-group. The aim of this section is to show that the property $\soc(ZFG) \trianglelefteq FG$ is invariant under isoclinism in the following sense: If $Q$ is a finite $p$-group isoclinic to $G$, then $\soc(ZFQ) \trianglelefteq FQ$ holds precisely if we have $\soc(ZFG) \trianglelefteq FG$. The proof of this statement is based on some observations on the center of $G$ and the transition to the quotient group $\bar{G} \coloneqq G/Z(G)$. 

\begin{lemma}\label{lemma:centerpgroups}
$\null$
\begin{enumerate}[(i)]
\item We have $\soc(ZFG) \subseteq Z(G)^+ \cdot FG$. 
\item $\soc(ZFG)$ is an ideal of $FG$ if and only if $\soc(ZFG) = (Z(G) G')^+ \cdot FG$ holds.
\end{enumerate}
\end{lemma}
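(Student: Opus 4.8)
The plan is to prove both parts by exploiting the general results already established, specialized to the case where $G$ is a $p$-group, so that $P = G$, $H = 1$, $O_{p'}(G) = 1$, $Z(P) = Z(G)$, and $J(FG) = \omega(FG)$. For part (i), I would apply Corollary~\ref{cor:zpdginsoc}: since $G$ is a $p$-group we have $O_p(Z(G)) = Z(G)$, hence that corollary immediately gives $\soc(ZFG) \subseteq O_p(Z(G))^+ \cdot FG = Z(G)^+ \cdot FG$. Alternatively, and essentially equivalently, one argues directly: for $z \in Z(G)$ the element $z - 1$ is nilpotent (as $G$ is a $p$-group) and central, so $z-1 \in J(ZFG)$; therefore any $x = \sum_g a_g g \in \soc(ZFG)$ satisfies $x(z-1) = 0$, which forces $a_g = a_{gz}$ for all $g \in G$ and all $z \in Z(G)$, i.e.\ $x \in Z(G)^+ \cdot FG$. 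I expect part (i) to be essentially immediate from what precedes.

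For part (ii), one direction is easy. If $\soc(ZFG) = (Z(G)G')^+ \cdot FG$, then in particular $\soc(ZFG) \subseteq (G')^+ \cdot FG$ since $(Z(G)G')^+ \cdot FG \subseteq (G')^+ \cdot FG$, and Lemma~\ref{lemma:socideal} gives $\soc(ZFG) \trianglelefteq FG$. For the converse, suppose $\soc(ZFG) \trianglelefteq FG$. First I would note $(Z(G)G')^+ \in \soc(ZFG)$: by Corollary~\ref{cor:zpdginsoc} (with $P = G$) we have $(Z(P)G')^+ = (Z(G)G')^+ \in \soc(ZFG)$, and since $\soc(ZFG)$ is an ideal, $(Z(G)G')^+ \cdot FG \subseteq \soc(ZFG)$ — noting that $(Z(G)G')^+ \cdot FG \subseteq (G')^+ \cdot FG \subseteq ZFG$ so this product already lies in $ZFG$. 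For the reverse inclusion, take $x = \sum_g a_g g \in \soc(ZFG)$. By part (i), $x \in Z(G)^+ \cdot FG$, so $a_g = a_{gz}$ for all $z \in Z(G)$; and since $\soc(ZFG) \trianglelefteq FG$, Lemma~\ref{lemma:socideal} gives $x \in (G')^+ \cdot FG$, so $a_g = a_{gd}$ for all $d \in G'$. Hence $a_g$ is constant on cosets of $Z(G)G'$, which is exactly the statement $x \in (Z(G)G')^+ \cdot FG$.

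The main (minor) obstacle is bookkeeping about which products automatically land in $ZFG$: one must check at each step that writing $(Z(G)G')^+ \cdot FG$ is legitimate as a subset of $\soc(ZFG)$, which relies on the observation $(G')^+ \cdot FG \subseteq ZFG$ (this follows because $G'$ is normal, so $(G')^+$ is central, and $(G')^+ \cdot FG = FG \cdot (G')^+$ is the ideal generated by a central idempotent-multiple; more elementarily, $g (G')^+ g^{-1} = (G')^+$ for all $g$, and $(G')^+ \cdot h (G')^+ = |G'| \cdot (hG')^+$ lies in $ZFG$ when one accounts for the conjugation action — in characteristic $p$ this needs the standard fact that $(G')^+ \cdot FG$ equals the span of coset sums $(dG')^+$ with $d$ ranging over $G$, each of which is a union of conjugacy classes since $G/G'$ is abelian). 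I would cite the already-used identity $\rAnn_{FG}(\omega(FG')) = (G')^+ \cdot FG$ and the fact that $FG/\omega(FG')\cdot FG \cong F[G/G']$ is commutative to justify $(G')^+ \cdot FG \subseteq ZFG$ cleanly, rather than re-deriving it. Everything else is a direct translation of Corollary~\ref{cor:zpdginsoc} and Lemma~\ref{lemma:socideal} into the $p$-group setting.
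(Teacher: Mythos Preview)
Your proposal is correct and follows essentially the same route as the paper: part (i) is a direct specialization of Corollary~\ref{cor:zpdginsoc} (since $O_p(Z(G)) = Z(G)$ for a $p$-group), and part (ii) combines that corollary with Lemma~\ref{lemma:socideal} in both directions. The paper's write-up is more compressed---it phrases the forward direction of (ii) purely as a chain of inclusions and, for the converse, simply notes that $(Z(G)G')^+ \cdot FG$ is obviously an ideal of $FG$ (being generated by a central element) rather than invoking Lemma~\ref{lemma:socideal}---but the logical content is identical; your extra paragraph worrying about $(G')^+ \cdot FG \subseteq ZFG$ is unnecessary, since this was already used in the proof of Corollary~\ref{cor:zpdginsoc}.
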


\begin{proof}
The first statement follows by Corollary \ref{cor:zpdginsoc}. Now let $\soc(ZFG)$ be an ideal of $FG$. Lemma~\ref{lemma:socideal} then yields $\soc(ZFG) \subseteq (G')^+ \cdot FG$. Together with (i), this implies $\soc(ZFG) \subseteq (Z(G) G')^+ \cdot FG$, and by Corollary~\ref{cor:zpdginsoc}, we obtain equality. Conversely, $(Z(G)G')^+ \cdot FG$ is obviously an ideal in $FG$. \qedhere
\end{proof}

In the given situation, we have
$$\DCLab \coloneqq \DCL{G}{Z(G)} = \{C \in \Cl(G) \colon C \not \subseteq Z(G),\ |C| = |\bar{C}|\}.$$ Note that the length of every conjugacy class in $\DCLab$ is a nontrivial power of $p$. Let $\IDCLab \coloneqq \IDCL{G}{Z(G)}$ be the set of images of the classes in $\DCLab$ and denote by $\SIDCLab \coloneqq \SIDCL{G}{Z(G)}$ the corresponding class sums in~$F\bar{G}$. In this situation, the implication given in Theorem \ref{theo:anndecconjconstantcoeffs} is an equivalence:
%

\begin{lemma}\label{lemma:correspondencecenter}
The socle $\soc(ZFG)$ is an ideal in $FG$ if and only if $\Ann_{ZF \bar{G}}(\SIDCLab) \subseteq (\bar{G}')^+ \cdot F\bar{G}$ holds.
\end{lemma}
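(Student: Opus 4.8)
The plan is to get one implication for free from Theorem~\ref{theo:anndecconjconstantcoeffs} and to obtain the converse by pulling an element of $\soc(ZFG)$ back along the adjoint of the canonical projection onto $F\bar{G}$. Write $\nu \coloneqq \nu_{Z(G)} \colon FG \to F\bar{G}$ and let $\nu^\ast$ be its adjoint, which is injective with image $Z(G)^+ \cdot FG$. Since $\SIDCLab = \SIDCL{G}{Z(G)}$ by definition, Theorem~\ref{theo:anndecconjconstantcoeffs} applied with $N = Z(G)$ gives the identity $\Ann_{ZF\bar{G}}(\nu(J(ZFG))) = \Ann_{ZF\bar{G}}(\SIDCLab)$, and moreover tells us that $\soc(ZFG) \trianglelefteq FG$ implies $\Ann_{ZF\bar{G}}(\SIDCLab) \subseteq (\bar{G}')^+ \cdot F\bar{G}$. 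This already proves the forward direction.

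For the converse, assume $\Ann_{ZF\bar{G}}(\SIDCLab) \subseteq (\bar{G}')^+ \cdot F\bar{G}$. By Lemma~\ref{lemma:socideal} it suffices to show $\soc(ZFG) \subseteq (G')^+ \cdot FG$, so fix $x \in \soc(ZFG)$. By Lemma~\ref{lemma:centerpgroups}\,(i) we have $x \in Z(G)^+ \cdot FG = \Im(\nu^\ast)$, so by injectivity of $\nu^\ast$ there is a unique $y \in F\bar{G}$ with $x = \nu^\ast(y)$. Now \cite[Remark 2.9]{BRE221} (used exactly as in the proof of Theorem~\ref{theo:centralproduct}) shows that $x \in \soc(ZFG)$ forces $y \in \Ann_{ZF\bar{G}}(\nu(J(ZFG)))$; alternatively one checks this directly from the adjoint identity $\nu^\ast(a)\,b = \nu^\ast(a\,\nu(b))$ for $a \in F\bar{G}$, $b \in FG$ (valid since $Z(G)^+\cdot FG$ is an ideal), which, together with injectivity of $\nu^\ast$, shows that $y$ commutes with every $\nu(g)$ and annihilates $\nu(J(ZFG))$.

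By the identity from Theorem~\ref{theo:anndecconjconstantcoeffs} noted above, $\Ann_{ZF\bar{G}}(\nu(J(ZFG))) = \Ann_{ZF\bar{G}}(\SIDCLab) \subseteq (\bar{G}')^+ \cdot F\bar{G}$ by hypothesis, hence $y \in (\bar{G}')^+ \cdot F\bar{G}$. Remark~\ref{rem:proplambda} then yields $x = \nu^\ast(y) \in (G')^+ \cdot FG$. As $x \in \soc(ZFG)$ was arbitrary, we conclude $\soc(ZFG) \subseteq (G')^+ \cdot FG$, and Lemma~\ref{lemma:socideal} gives $\soc(ZFG) \trianglelefteq FG$. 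The substantive input here is entirely contained in the two auxiliary facts — that $\soc(ZFG)$ lies in $Z(G)^+ \cdot FG$, so the pullback $y$ exists (this is Corollary~\ref{cor:zpdginsoc}, ultimately the nilpotence of $z-1$ for $z \in Z(G)$), and that the socle condition transfers cleanly to $y$ through $\nu^\ast$; once these are in hand, the argument is a formal chase through the adjoint, Theorem~\ref{theo:anndecconjconstantcoeffs}, and Lemma~\ref{lemma:socideal}, so I do not expect a genuine obstacle beyond bookkeeping.
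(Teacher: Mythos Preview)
Your proof is correct and follows essentially the same route as the paper's own argument: both use Lemma~\ref{lemma:centerpgroups}\,(i) to land $\soc(ZFG)$ inside $\Im(\nu_{Z(G)}^\ast)$, invoke \cite[Remark 2.9]{BRE221} to identify the preimage with $\Ann_{ZF\bar{G}}(\SIDCLab)$, and then finish via Remark~\ref{rem:proplambda} and Lemma~\ref{lemma:socideal}. The only cosmetic difference is that the paper packages both directions into the single equality $\soc(ZFG) = \nu^\ast_{Z(G)}\bigl(\Ann_{ZF\bar{G}}(\SIDCLab)\bigr)$ and then applies the ``if and only if'' of Remark~\ref{rem:proplambda}, whereas you treat the forward direction separately via Theorem~\ref{theo:anndecconjconstantcoeffs}.
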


\begin{proof}
Consider the map $\nu_{Z(G)}^\ast \colon F\bar{G} \to FG$ introduced in Section \ref{sec:quotientgroups}. Lemma \ref{lemma:centerpgroups} yields $$\soc(ZFG) \subseteq Z(G)^+ \cdot FG = \Im \nu_{Z(G)}^\ast.$$ By \cite[Remark 2.9]{BRE221}, we therefore obtain
$$\soc(ZFG) = \nu^\ast_{Z(G)}\bigl(\Ann_{ZF \bar{G}}(\SIDCLab)\bigr).$$ By Remark \ref{rem:proplambda}, we have $\Ann_{ZF \bar{G}}(\SIDCLab) \subseteq (\bar{G}')^+ \cdot F\bar{G}$ if and only if $\soc(ZFG) \subseteq (G')^+ \cdot FG$ holds, which is equivalent to $\soc(ZFG) \trianglelefteq FG$ by Lemma \ref{lemma:socideal}.
\end{proof}

Now we proceed to the main result of this section. Recall that two finite $p$-groups $G_1$ and $G_2$ are isoclinic if there exist isomorphisms $\varphi \colon G_1' \to G_2'$ and $\beta \colon G_1/Z(G_1) \to G_2/Z(G_2)$ such that whenever $\beta(a_1 Z(G_1)) = a_2 Z(G_2)$ and $\beta(b_1 Z(G_1)) = b_2 Z(G_2)$ hold for $a_1, b_1 \in G_1$ and $a_2, b_2 \in G_2$, then $\varphi([a_1, b_1]) = [a_2, b_2]$ follows. We set $\bar{G}_i \coloneqq G_i/Z(G_i)$ and write $\DCLi{i}$ and $\IDCLi{i}$ to distinguish the sets $\DCLab$ and $\IDCLab$ for $i \in \{1,2\}$. 

\begin{theorem}\label{theo:isoclinic}
Let $G_1$ and $G_2$ be finite isoclinic $p$-groups. Then $\soc(ZFG_1) \trianglelefteq FG_1$ is equivalent to $\soc(ZFG_2) \trianglelefteq FG_2$. 
\end{theorem}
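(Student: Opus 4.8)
The plan is to reduce everything to the quotient groups $\bar G_i = G_i/Z(G_i)$ and to use Lemma~\ref{lemma:correspondencecenter}, which says that $\soc(ZFG_i) \trianglelefteq FG_i$ if and only if $\Ann_{ZF\bar G_i}(\SIDCLi{i}) \subseteq (\bar G_i')^+ \cdot F\bar G_i$. So it suffices to transport this condition from $G_1$ to $G_2$ via the isoclinism data $\varphi\colon G_1' \to G_2'$ and $\beta\colon \bar G_1 \to \bar G_2$. Note that $\beta$ is a group isomorphism, so it induces an algebra isomorphism $F\bar G_1 \to F\bar G_2$, which I will also denote $\beta$. The first step is therefore to check that $\beta$ carries $(\bar G_1')^+ \cdot F\bar G_1$ onto $(\bar G_2')^+ \cdot F\bar G_2$; this is immediate since $\beta(\bar G_1') = \bar G_2'$ (the derived subgroup of $\bar G_i$ is the image of $G_i'$, and isoclinism makes these correspond).

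The main point is to show that $\beta$ maps the class-sum set $\SIDCLi{1}$ to (a set with the same annihilator as) $\SIDCLi{2}$. Concretely, I would argue that $\beta$ induces a bijection between $\IDCLi{1}$ and $\IDCLi{2}$. Recall $\DCLi{i} = \{C \in \Cl(G_i) \colon C \not\subseteq Z(G_i),\ |C| = |\bar C|\}$, and $\IDCLi{i} = \{\bar C \colon C \in \DCLi{i}\}$. A conjugacy class of $\bar G_i$ is simply an orbit $\{\beta^{-1}$-equivariant$\}$... more precisely, the conjugacy classes of $\bar G_i = G_i/Z(G_i)$ correspond, under $\beta$, bijectively: $\beta$ sends the $\bar G_1$-class of $\bar a_1$ to the $\bar G_2$-class of $\beta(\bar a_1)$. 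What must be checked is that this bijection respects the subsets $\IDCLi{i}$. For a class $\bar C \in \Cl(\bar G_i)$ with representative $\bar a$, the condition "$\bar C \in \IDCLi{i}$" unwinds to: $\bar a \neq 1$ in $\bar G_i$ (i.e.\ $a \notin Z(G_i)$), and the full preimage of $\bar C$ in $G_i$, namely $\bigcup_{z} [az]$, is a single $G_i$-conjugacy class of the same size as $\bar C$ — equivalently, the map "lift a class of $\bar G_i$ to $G_i$" doesn't split, equivalently $[G_i, a] \cap Z(G_i) = 1$ so that $|[a]| = |[\bar a]|$. Since $[G_i, a]$ depends only on $\bar a$ through the commutator structure — and isoclinism says precisely that the commutator maps $\bar G_1 \times \bar G_1 \to G_1'$ and $\bar G_2 \times \bar G_2 \to G_2'$ are intertwined by $\beta$ and $\varphi$ — we get that $\beta$ identifies $\langle [g, a] : g \in G_1\rangle$ with $\langle [g', a'] : g' \in G_2\rangle$ (as subgroups of $G_1'$, resp.\ $G_2'$, via $\varphi$), and $\varphi$ identifies $G_1' \cap Z(G_1)$ with $G_2' \cap Z(G_2)$. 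Hence $[G_1,a] \cap Z(G_1) = 1$ if and only if $[G_2, a'] \cap Z(G_2) = 1$, where $\beta(\bar a) = \bar a'$. This shows $\bar C \in \IDCLi{1} \iff \beta(\bar C) \in \IDCLi{2}$, so $\beta(\SIDCLi{1})$ and $\SIDCLi{2}$ differ at most by nonzero scalars on each element (class sums map to class sums, up to the identification of sizes, which are equal by construction) — in particular they have the same annihilator in $ZF\bar G_2$.

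Putting it together: $\beta$ restricts to an algebra isomorphism $ZF\bar G_1 \to ZF\bar G_2$ carrying $\Ann_{ZF\bar G_1}(\SIDCLi{1})$ onto $\Ann_{ZF\bar G_2}(\SIDCLi{2})$ and $(\bar G_1')^+ F\bar G_1$ onto $(\bar G_2')^+ F\bar G_2$. Therefore the inclusion $\Ann_{ZF\bar G_1}(\SIDCLi{1}) \subseteq (\bar G_1')^+ F\bar G_1$ holds if and only if $\Ann_{ZF\bar G_2}(\SIDCLi{2}) \subseteq (\bar G_2')^+ F\bar G_2$ holds, and by Lemma~\ref{lemma:correspondencecenter} this is exactly the statement that $\soc(ZFG_1) \trianglelefteq FG_1$ if and only if $\soc(ZFG_2) \trianglelefteq FG_2$.

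\textbf{Main obstacle.} The bookkeeping around class sizes is the delicate part: one must verify carefully that, for $a \notin Z(G_i)$, the size $|[a]|$ and the size $|[\bar a]|$ of the image class, and in particular the condition $|[a]| = |[\bar a]|$, can be read off purely from the commutator pairing $\bar G_i \times \bar G_i \to G_i'$ together with the subgroup $G_i' \cap Z(G_i)$ — the data that isoclinism preserves — rather than from any information in $G_i$ that isoclinic groups might fail to share. Concretely one should express $[a] = U_a a$ with $U_a = \{[g,a] : g \in G_i\}$ (as in Lemma~\ref{lemma:conjstructurePabelian}, though here without the $P \rtimes H$ hypothesis one first notes $U_a$ need not be a subgroup, but its cardinality $|[a]| = |U_a|$ and the image condition only involve commutators), note $U_a$ is determined by $\bar a$ via the pairing, and that $\nu_{Z(G_i)}$ restricted to $[a]$ is injective iff $U_a \cap (U_a \cdot (Z(G_i)\cap G_i')^{-1}) $ collapses appropriately — the cleanest formulation being $|[\bar a]| = |[a]| \iff$ the natural surjection $[a] \twoheadrightarrow [\bar a]$ is a bijection $\iff a z \in [a] \Rightarrow z = 1$ for $z \in Z(G_i)$, and $az \in [a]$ means $z \in U_a \subseteq G_i'$, i.e.\ $z \in U_a \cap Z(G_i) \subseteq G_i' \cap Z(G_i)$. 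All of these are isoclinism-invariant once phrased in terms of $\varphi$ and $\beta$, but making the argument watertight requires being careful that $U_a$, as a \emph{subset} of $G_i'$ (not just its size), corresponds under $\varphi$ to $U_{a'}$ — which follows directly from the defining property of isoclinism applied with $b_1$ ranging over $G_1$.
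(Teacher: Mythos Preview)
Your proposal is correct and follows essentially the same approach as the paper: both reduce via Lemma~\ref{lemma:correspondencecenter} to showing that the isomorphism $\beta \colon \bar G_1 \to \bar G_2$ carries $\IDCLi{1}$ bijectively onto $\IDCLi{2}$ and $(\bar G_1')^+ F\bar G_1$ onto $(\bar G_2')^+ F\bar G_2$. The only difference is cosmetic: the paper verifies the bijection by a short contradiction argument (if some $[x_2,g_2]$ lies in $Z(G_2)\setminus\{1\}$, pull it back via $\varphi$ and $\beta$ to get $[x_1,g_1]\in Z(G_1)\setminus\{1\}$, contradicting $|C_1|=|\bar C_1|$), whereas you unwind the condition $|C|=|\bar C|$ explicitly as $U_a\cap Z(G_i)=\{1\}$ and then check this is isoclinism-invariant; these are the same computation viewed from two sides.
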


\begin{proof}
Let $\varphi \colon G_1' \to G_2'$ and $\beta \colon \bar{G}_1 \to \bar{G}_2$ be the corresponding isomorphisms. We first show that $\IDCLi{1}$ and $\IDCLi{2}$ are in bijective correspondence under $\beta$. 	Let $C_1 \in \DCLi{1}$ and set $\bar{C}_1$ to be its image in $\bar{G}_1$. Then $\bar{C}_2 \coloneqq \beta(\bar{C}_1)$ is a conjugacy class of $\bar{G}_2$. Consider a preimage $C_2 \in \Cl(G_2)$ of $\bar{C}_2$. Let $x_2 \in C_2$ and assume that $1 \neq [x_2, g_2] \in Z(G_2)$ holds for some $g_2 \in G_2$. Choose elements $x_1 \in C_1$ and $g_1 \in G_1$ with $\beta(x_1 Z(G_1)) = x_2 Z(G_2)$ and $\beta(g_1 Z(G_1)) = g_2 Z(G_2)$. We then obtain $\varphi([x_1, g_1]) = [x_2, g_2] \in Z(G_2) \backslash \{1\}$. Note that $\beta([x_1, g_1] Z(G_1)) = [x_2, g_2] Z(G_2) = Z(G_2)$ holds, so we have $1 \neq [x_1, g_1] \in Z(G_1)$. This implies $|\bar{C}_1| < |C_1|$, which is a contradiction to $C_1 \in \DCLi{1}$. Hence we obtain $\bar{C}_2 \in \IDCLi{2}$. The other implication follows by symmetry. 
	\bigskip
	
	Extending $\beta$ $F$-linearly gives rise to an $F$-algebra isomorphism $\hat{\beta} \colon F\bar{G}_1 \to F\bar{G}_2$. By the above, we have $\hat{\beta}(\SIDCLi{1}) = \SIDCLi{2}$. Now if $\soc(ZFG_1)$ is an ideal of $FG_1$, Lemma~\ref{lemma:correspondencecenter} implies $\Ann_{ZF\bar{G}_1}(\SIDCLi{1}) \subseteq (\bar{G}_1')^+ \cdot F\bar{G}_1.$ Applying the isomorphism $\hat{\beta}$ yields $\Ann_{ZF\bar{G}_2}(\SIDCLi{2}) \subseteq (\bar{G}_2')^+ \cdot F\bar{G}_2.$ By Lemma~\ref{lemma:correspondencecenter}, $\soc(ZFG_2)$ is an ideal in $FG_2$. The other implication follows by symmetry.
\end{proof}

\subsection{Odd characteristic}\label{sec:oddcharacteristic}
In this section, we assume that $F$ is an algebraically closed field of odd characteristic $p$. 

\begin{Remark}\label{rem:prodelementaryabeliantrivial}
For an abelian $p$-group $G$, we have $\prod_{g \in G} g = 1$ since every nontrivial element in $G$ differs from its inverse and their product is the identity element.
\end{Remark}

\begin{Proposition}\label{prop:npcnonconstantsubgroups}
	Let $G$ be a finite $p$-group of nilpotency class exactly two. Then there exists an element $y \in ZFG$ with $y \notin (G')^+ \cdot FG$ such that $y \cdot S^+ = 0$ holds for all subgroups $1 \neq S \subseteq G'$. 
\end{Proposition}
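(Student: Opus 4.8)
The plan is to exhibit one explicit central element rather than to argue abstractly. I would first reformulate the three requirements on $y = \sum_{g \in G} a_g g$: that $y \in ZFG$ means the coefficient function $g \mapsto a_g$ is constant on conjugacy classes; that $y \cdot S^+ = 0$ says precisely $\sum_{s \in S} a_{gs} = 0$ for every $g \in G$; and that $y \notin (G')^+ \cdot FG$ means $a$ is non-constant on at least one coset of $G'$. Since $G$ has class at most two, $G' \subseteq Z(G)$, so \emph{every} element of $FG'$ lies in $ZFG$ automatically; and since $G$ has class exactly two, $G' \neq 1$. I would also note at the outset that it suffices to arrange $y \cdot S^+ = 0$ for subgroups $S \le G'$ of order $p$: an arbitrary $1 \neq S \le G'$ is a disjoint union of cosets of any order-$p$ subgroup $S_0 \le S$, so $S^+ = \sum_i t_i S_0^+$ and hence $y \cdot S^+ = \sum_i t_i\,(y \cdot S_0^+)$ inside the commutative algebra $FG'$.

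The construction: since $G'$ is a nontrivial abelian $p$-group it has a subgroup $A$ of index $p$; I would fix such an $A$ and an element $c \in G' \setminus A$ and put
\[ y \coloneqq (c - 1)\, A^+ \in FG' \subseteq ZFG. \]
To verify $y \cdot S^+ = 0$ for $S \le G'$ of order $p$ I would work in the commutative algebra $FG'$ and split into two cases. If $S \subseteq A$, then $S^+ A^+ = |S| \cdot A^+ = 0$ in characteristic $p$, so $y \cdot S^+ = 0$. If $S \not\subseteq A$, then $SA$ is a subgroup strictly between $A$ and $G'$, hence $SA = G'$ as $[G' : A] = p$ is prime; since also $S \cap A = 1$, every element of $G'$ is uniquely a product of an element of $S$ and one of $A$, giving $S^+ A^+ = (G')^+$ and therefore $y \cdot S^+ = (c - 1)(G')^+ = 0$. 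By the reduction above, $y \cdot S^+ = 0$ for all $1 \neq S \le G'$.

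Finally I would check $y \notin (G')^+ \cdot FG$. The coset sums $(xG')^+$ form a basis of $(G')^+ \cdot FG$, so $(G')^+ \cdot FG \cap FG' = F \cdot (G')^+$; as $y$ is supported inside $G'$, it thus suffices to see that $y$ is not a scalar multiple of $(G')^+$. This is precisely where the oddness of $p$ is used (cf.\ Remark~\ref{rem:prodelementaryabeliantrivial}): the support of $y = c A^+ - A^+$ lies in $A \cup cA$, the preimage under $G' \to G'/A \cong \Z/p$ of a two-element subset of $G'/A$; as $p \ge 3$ this subset is proper, so $y$ has coefficient $0$ on some coset of $A$ yet $y \neq 0$ (because $cA \cap A = \emptyset$), and hence $y$ is not proportional to $(G')^+$. (For $p = 2$ one would have $(c-1)A^+ = (G')^+$, which is why odd characteristic is genuinely needed.)

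The only real choice in this argument is the element $y$ itself: looking for it inside $FG'$ and demanding that it annihilate every $S^+$ pushes one toward products of the shape $(c-1)A^+$, and taking $A$ of index exactly $p$ is what makes both cases $S \subseteq A$ and $S \not\subseteq A$ collapse. The remaining verifications are the short computations indicated above, so I do not expect a serious obstacle beyond hitting on the right $y$.
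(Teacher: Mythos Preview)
Your proof is correct and takes a genuinely different route from the paper's. The paper fixes a nontrivial additive homomorphism $\alpha \colon G' \to F$ and sets $y = \sum_{g \in G'} \alpha(g)\, g$; annihilation of every $S^+$ is then verified in one stroke via $\sum_{s \in S} \alpha(s^{-1}) = \alpha\bigl(\prod_{s \in S} s^{-1}\bigr) = \alpha(1) = 0$, which relies on Remark~\ref{rem:prodelementaryabeliantrivial} and hence on $p$ being odd. Your element $y = (c-1)A^+$ with $[G':A]=p$ is more concrete: after reducing to $|S| = p$, the dichotomy $S \subseteq A$ versus $SA = G'$ handles everything by elementary coset arithmetic, and this part works for any prime~$p$. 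Interestingly, the two arguments invoke odd characteristic at different places --- the paper in the annihilation step, you only in the final check that $y \notin (G')^+ \cdot FG$ (via the existence of a third coset of $A$ in $G'$). The paper's approach is marginally slicker in treating all $S$ uniformly; yours is more explicit and avoids the product-of-elements lemma entirely.
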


\begin{proof}
	Since $G'$ is a nontrivial $p$-group, there exists a nontrivial group homomorphism $\alpha \colon G' \to F$. We define an element $y \coloneqq \sum_{g \in G} a_g g \in FG$ by setting $a_g \coloneqq \alpha(g)$ for $g \in G'$ and $a_g = 0$ otherwise. We have $y \in FG' \subseteq FZ(G) \subseteq ZFG$. Now consider a subgroup $1 \neq S \subseteq G'$. The coefficient of $w \in G$ in the product $y \cdot S^+$ is given by $\sum_{s \in S} a_{ws^{-1}}$.
	For $w \notin G'$, all summands are zero. For $w \in G'$, we obtain
	$$\sum_{s \in S} a_{ws^{-1}} = \sum_{s \in S} \alpha(w s^{-1}) = |S| \cdot \alpha(w) + \sum_{s \in S} \alpha(s^{-1})  = \alpha\Bigl(\prod_{s \in S} s^{-1}\Bigr) = \alpha(1) = 0.$$
	In the second and third step, we use that $\alpha$ is a group homomorphism. The fourth equality is due to Remark~\ref{rem:prodelementaryabeliantrivial}. This implies $y \cdot S^+ = 0$ as claimed.
\end{proof}

In this special situation, the condition given in Theorem~\ref{theo:pabeliansocideal} is in fact equivalent to $\soc(ZFG) \trianglelefteq FG$:

\begin{theorem}\label{theo:odd}
	Let $G$ be a finite $p$-group. Then $\soc(ZFG)$ is an ideal in $FG$ if and only if $G$ has nilpotency class at most two. 
\end{theorem}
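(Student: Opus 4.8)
The plan is to prove the two implications separately, with the hard direction being the necessity of nilpotency class at most two. The easy direction is already in hand: if $c(G) \leq 2$, then $G' \subseteq Z(G) = Z(P)$ (with $P = G$ since $G$ is a $p$-group), so Theorem~\ref{theo:pabeliansocideal} immediately yields $\soc(ZFG) \trianglelefteq FG$. So I would only need to write one sentence for this.

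For the converse, I would argue by contrapositive: suppose $c(G) \geq 3$ and produce a witness showing $\soc(ZFG)$ is not an ideal, i.e.\ (via Lemma~\ref{lemma:socideal}) an element of $\soc(ZFG)$ not lying in $(G')^+ \cdot FG$. The natural strategy is induction on $|G|$, reducing to the case where $G$ has nilpotency class exactly three — or more precisely, passing to a quotient. First I would look for a normal subgroup $N$ such that $G/N$ still has nilpotency class $\geq 3$ but is ``small''; since $\soc(ZF(G/N)) \trianglelefteq F(G/N)$ would follow from $\soc(ZFG) \trianglelefteq FG$ by Corollary~\ref{cor:socquotient}, it suffices to derive a contradiction for $G/N$. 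This should let me reduce to a group $G$ with $c(G) = 3$ and, say, $[P,G']$ of order $p$ or other minimality constraints. Then the key tool is Proposition~\ref{prop:npcnonconstantsubgroups} applied to $G' $ or to an appropriate normal subgroup: it supplies an element $y \in ZFG$ with $y \notin (G')^+ \cdot FG$ that annihilates $S^+$ for every nontrivial subgroup $S \subseteq G'$ — in particular $G'' \neq 1$ gives such an $S$. Combined with the basis of $J(ZFG)$ from Theorem~\ref{theo:structjzfg} (whose elements, in the $p$-group case, are class sums $C^+$ with $p \mid |C|$) and the fact that $G'' \neq 1$ forces the presence of ``non-constant'' classes, I would show that $y$ — possibly after adding a correction term built from class sums — annihilates all of $J(ZFG)$, hence lies in $\soc(ZFG)$, while still not belonging to $(G')^+ \cdot FG$. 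Here Lemma~\ref{lemma:centerpgroups} and Lemma~\ref{lemma:correspondencecenter} would be used to transfer the question to $\bar G = G/Z(G)$, where the classes in $\IDCLab$ control membership in the annihilator.

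The main obstacle, I expect, is showing that the witness $y$ from Proposition~\ref{prop:npcnonconstantsubgroups} (or a modification of it) genuinely annihilates \emph{all} basis elements $b_C$ of $J(ZFG)$, not merely the subgroup sums $S^+$. The classes $C$ with $p \mid |C|$ that are not contained in any subgroup of $G'$ are exactly the troublesome ones; handling them requires understanding how conjugation acts on $G'$ and using $c(G)=3$ to guarantee that the relevant orbit sums still kill $y$. I would likely exploit the isoclinism invariance (Theorem~\ref{theo:isoclinic}) to replace $G$ by a conveniently chosen isoclinic group — for instance one where $Z(G) \subseteq G'$, so that $G$ is as close as possible to its ``stem'' form — thereby simplifying the structure of the conjugacy classes appearing in $\DCLab$ and making the annihilation computation tractable.
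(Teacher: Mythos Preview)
Your easy direction and the reduction step (pass to quotients via Corollary~\ref{cor:socquotient} until $c(G)=3$) are exactly right. The gap is in the base case $c(G)=3$: you have the right tools---Proposition~\ref{prop:npcnonconstantsubgroups} and Lemma~\ref{lemma:correspondencecenter}---but you are applying the first to the wrong group and therefore anticipating an obstacle that does not exist.

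The paper applies Proposition~\ref{prop:npcnonconstantsubgroups} not to $G$ but to $\bar G = G/Z(G)$, which has nilpotency class exactly two. This matters because in a class-two group one has $\bar G' \subseteq Z(\bar G)$, so for any $c \in \bar G$ the map $g \mapsto [g,c]$ is a group homomorphism; consequently every conjugacy class $\bar C$ of $\bar G$ is of the form $Sc$ for a \emph{subgroup} $S \subseteq \bar G'$. Thus the element $y \in ZF\bar G$ produced by Proposition~\ref{prop:npcnonconstantsubgroups} already annihilates every $\bar C^+$ with $|\bar C|>1$---in particular every element of $\SIDCLab$---with no correction terms needed. Since $y \notin (\bar G')^+ \cdot F\bar G$, Lemma~\ref{lemma:correspondencecenter} immediately gives $\soc(ZFG) \not\trianglelefteq FG$. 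Your worry about ``classes $C$ with $p \mid |C|$ that are not contained in any subgroup of $G'$'' is a symptom of working in $G$ rather than in $\bar G$; once you pass to the quotient, every relevant class \emph{is} a coset of a subgroup. The isoclinism reduction and the correction-term idea are unnecessary detours.
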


\begin{proof}
If $G$ is of nilpotency class at most two, we have $G' \subseteq Z(G)$ and hence $\soc(ZFG)$ is an ideal in $FG$ by Theorem~\ref{theo:pabeliansocideal}.
For the converse implication, we use induction on the nilpotency class of $G$. Note that $\bar{G} \coloneqq G/Z(G)$ has nilpotency class $c(G) -1$. First assume $c(G) = 3$. We apply Proposition~\ref{prop:npcnonconstantsubgroups} to the group~$\bar{G}$ and consider the element $y \in ZF\bar{G}$ constructed therein. Let $\bar{C} \in \IDCLab$ be a conjugacy class and let $c \in \bar{C}$. Since $\bar{G}' \subseteq Z(\bar{G})$ holds, the map $\gamma \colon \bar{G} \to \bar{G},\ g \mapsto [g,c]$ is a group homomorphism and hence we have $$\bar{C} = \left\{gcg^{-1} \colon g \in \bar{G}\right\} = \left\{[g,c] c \colon g \in \bar{G}\right\} = S c,$$ where $S \coloneqq \Im \gamma$ is a subgroup of $\bar{G}'$. Note that we have $|S| = |\bar{C}| > 1$. By Proposition \ref{prop:npcnonconstantsubgroups}, we have $y \cdot S^+ = 0$ and hence $y \cdot \bar{C}^+ = y \cdot (Sc)^+ = 0$. Since $y \notin (\bar{G}')^+ \cdot F\bar{G}$ holds, $\soc(ZFG)$ is not an ideal of $FG$ (see Lemma~\ref{lemma:correspondencecenter}). 
	If $G$ is of nilpotency class $c(G) > 3$, we obtain $\soc(ZF\bar{G}) \not \trianglelefteq F\bar{G}$ by induction. Corollary~\ref{cor:socquotient} then yields $\soc(ZFG) \not \trianglelefteq FG$. 
\end{proof}


\begin{Remark}
The analogous construction fails for $p = 2$ since the statement of Remark \ref{rem:prodelementaryabeliantrivial} does not hold for groups of even order.
\end{Remark}

\subsection{\texorpdfstring{Characteristic $p = 2$}{Characteristic p = 2}} \label{sec:evencharacteristic}
Throughout, let $F$ be an algebraically closed field of characteristic two. Unless otherwise stated, we assume that $G$ is a finite $2$-group.

\begin{Remark}\label{rem:fg}
Let $C = \{f,g\}$ be a conjugacy class of length two of $G$. An inner automorphism of~$G$ either fixes both $f$ and $g$, or it interchanges the two elements. For $c \coloneqq g f^{-1} \in G'$, this yields $C_G(f) = C_G(g) \subseteq C_G(c).$ For $h \in G \setminus C_G(f)$, we have $hch^{-1} = hgf^{-1}h^{-1} = fg^{-1} = c^{-1}.$ This shows that the subgroup $\langle c \rangle \subseteq G'$ is normal in $G$. 
\end{Remark}

For every conjugacy class $C \coloneqq \{f,g\}$ of length two, we set $Y_C \coloneqq \langle g f^{-1}\rangle.$ In the following, we consider the subgroup

\begin{equation*}
	Y(G) \coloneqq \langle Y_C: C \in \Cl(G),\ |C| = 2 \rangle.
\end{equation*} 
Note that $Y(G)$ is characteristic in $G$. More precisely, we obtain the following:

\begin{lemma}\label{lemma:ygabelian}
We have $Y(G) \subseteq Z(\Phi(G))$. In particular, $Y(G)$ is abelian. 
\end{lemma}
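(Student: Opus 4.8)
The plan is to show the two inclusions $Y(G) \subseteq \Phi(G)$ and $Y(G) \subseteq C_G(\Phi(G))$ separately, which together give $Y(G) \subseteq Z(\Phi(G))$; abelianness is then immediate since $Y(G) \subseteq Z(\Phi(G))$ forces $Y(G)$ to centralize $Y(G) \subseteq \Phi(G)$, hence $Y(G)$ is abelian. It suffices to check both properties on the generators $Y_C = \langle gf^{-1} \rangle$ for conjugacy classes $C = \{f,g\}$ of length two, so fix such a $C$ and write $c \coloneqq gf^{-1} \in G'$.

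For the first inclusion, note that $G' \subseteq \Phi(G)$ always holds for a $p$-group (indeed $\Phi(G) = G' G^p$), and $c \in G'$, so $Y_C = \langle c \rangle \subseteq \Phi(G)$; summing over all length-two classes gives $Y(G) \subseteq \Phi(G)$. For the second inclusion I would argue that $c$ commutes with $\Phi(G)$. Since $G$ is a $2$-group and $C = \{f,g\}$, every element of $G$ either centralizes both $f$ and $g$ or swaps them; by Remark~\ref{rem:fg}, $C_G(f) = C_G(g) \subseteq C_G(c)$ is a subgroup of index two in $G$, and every element outside it inverts $c$. Now $\Phi(G)$ is generated by squares and commutators. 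Any square $h^2$ either lies in $C_G(f)$ (if $h$ does) and hence centralizes $c$, or $h$ swaps $f$ and $g$, in which case $h^2$ fixes both, so $h^2 \in C_G(f) \subseteq C_G(c)$; thus every square lies in $C_G(c)$. Similarly, a commutator $[a,b]$ is a product of two conjugates of $f^{-1}g$-type elements... more cleanly: the map $G \to \{\pm 1\}$ sending $h$ to $+1$ if $h \in C_G(f)$ and $-1$ otherwise is a homomorphism (its kernel is the index-two subgroup $C_G(f)$), so it kills $G'$, and it also kills $G^2$; hence $\Phi(G) = G'G^2 \subseteq C_G(f) \subseteq C_G(c)$. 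Therefore $c$ centralizes $\Phi(G)$, i.e. $c \in C_G(\Phi(G))$, and since this holds for every length-two class we get $Y(G) \subseteq C_G(\Phi(G))$.

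Combining the two inclusions: $Y(G) \subseteq \Phi(G) \cap C_G(\Phi(G))$. But $\Phi(G) \cap C_G(\Phi(G)) = Z(\Phi(G))$, since an element of $\Phi(G)$ that centralizes all of $\Phi(G)$ lies in the center of $\Phi(G)$, and conversely. Hence $Y(G) \subseteq Z(\Phi(G))$. Finally, because $Y(G) \subseteq \Phi(G)$ and $Y(G)$ centralizes $\Phi(G)$, in particular $Y(G)$ centralizes itself, so $Y(G)$ is abelian; this proves the lemma.

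The main obstacle is the clean verification that each generator $gf^{-1}$ centralizes $\Phi(G)$; the cleanest route is to package Remark~\ref{rem:fg} as the statement that $C_G(f)$ has index at most two in $G$ and that $c$ is inverted by everything outside $C_G(f)$, so that the sign homomorphism $G \twoheadrightarrow G/C_G(f) \hookrightarrow \{\pm1\}$ has $\Phi(G)$ in its kernel (as $\{\pm1\}$ has trivial Frattini subgroup and $\Phi$ is monotone under surjections, or directly since $G' G^2$ maps into $\{\pm1\}^2 \cdot \{\pm1\}' = 1$). One small case to keep in mind: if $f = g$ the class has length one, not two, so $c \neq 1$ is automatic for genuine length-two classes, but even allowing $c=1$ the statement is vacuously fine. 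No deeper structural input is needed beyond $\Phi(G) = G'G^p$ for $p$-groups and the elementary observations of Remark~\ref{rem:fg}.
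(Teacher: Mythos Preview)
Your proof is correct and follows essentially the same approach as the paper: show $Y(G)\subseteq G'\subseteq\Phi(G)$, then show $\Phi(G)\subseteq C_G(f)\subseteq C_G(gf^{-1})$ for each length-two class $\{f,g\}$, and combine. The only cosmetic difference is that the paper invokes the fact that $C_G(f)$ is a maximal subgroup (hence contains $\Phi(G)$ by definition), whereas you reach the same inclusion via $\Phi(G)=G'G^2$ and the sign homomorphism $G\twoheadrightarrow G/C_G(f)\cong\{\pm1\}$; these are equivalent one-line justifications.
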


\begin{proof}
	Note that $Y(G) \subseteq G' \subseteq \Phi(G)$ holds. Now let $C = \{f,g\}$ be a conjugacy class of length two. Since $C_G(f)$ is a maximal subgroup of $G$, Remark \ref{rem:fg} yields $\Phi(G) \subseteq C_G(f) \subseteq C_G(gf^{-1})$ and hence $\Phi(G)$ centralizes~$Y_C$. Thus $Y(G)$ is contained in the center of $\Phi(G)$, so in particular, it is abelian.	
\end{proof}

\begin{lemma}\label{lemma:sociny}
	We have $\soc(ZFG) \subseteq Y(G)^+ \cdot FG.$ 
\end{lemma}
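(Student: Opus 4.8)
The plan is to proceed exactly as in the proofs of the analogous containments in odd characteristic and in the $G' \subseteq Z(P)$ case, namely to show that every $y = \sum_{g\in G} a_g g \in \soc(ZFG)$ has coefficients constant along cosets of $Y(G)$. Since $Y(G)$ is generated by the elements $c = gf^{-1}$ arising from length-two conjugacy classes $C = \{f,g\}$, it suffices to show $a_w = a_{wc}$ for every $w\in G$ and every such generator $c$. The natural device is to multiply $y$ by a suitable element of $J(ZFG)$ built from $C$ and extract a coefficient. Concretely, by Remark~\ref{rem:fg} the class $C = \{f,g\}$ has length two, so $p = 2$ divides $|C|$ and hence $C^+ = f + g \in J(ZFG)$ by Theorem~\ref{theo:structjzfg} (taking $N = G$, so $O_{p'}(G) = 1$). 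Because $y \in \soc(ZFG) = \Ann_{ZFG}(J(ZFG))$, we get $y \cdot (f+g) = 0$.

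Next I would read off coefficients from $y\cdot(f+g) = 0$. The coefficient of an arbitrary $w \in G$ in $y(f+g)$ is $a_{wf^{-1}} + a_{wg^{-1}}$, so $a_{wf^{-1}} = a_{wg^{-1}}$ for all $w$; reindexing $w \mapsto wf$ gives $a_w = a_{wf g^{-1}} = a_{w c^{-1}}$, where $c = gf^{-1}$, i.e. $a_{w} = a_{wc^{-1}}$, equivalently $a_{wc} = a_w$. Thus $y$ is constant on cosets of $\langle c\rangle = Y_C$ for each length-two class $C$. Since the $Y_C$ generate $Y(G)$, an element that is constant on every $\langle c\rangle$-coset (for all such $c$) is constant on every $Y(G)$-coset: indeed, if $a_{wc} = a_w$ for every generator $c$ and every $w$, then by induction on word length in the generators $a_{wy} = a_w$ for all $y \in Y(G)$. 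Hence $y \in Y(G)^+ \cdot FG$, which is the claim.

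The only point that needs a little care — and the step I expect to be the mild obstacle — is the passage from "constant on cosets of each cyclic generator $Y_C$" to "constant on cosets of $Y(G)$"; this is where one uses that the coset-constancy relation for a fixed $c$ holds simultaneously for \emph{all} base points $w$, so that the relations compose. (One should also note that $Y(G)$ being abelian, by Lemma~\ref{lemma:ygabelian}, is not actually needed for this containment, only for later use.) Everything else is the routine "multiply by a socle annihilator and compare coefficients" computation already used twice in the preceding sections, with $C^+ = f+g$ playing the role of the element $b_C$ of $J(ZFG)$.
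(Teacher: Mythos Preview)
Your proposal is correct and follows essentially the same route as the paper: multiply $y$ by $C^+=f+g\in J(ZFG)$ for each length-two class, read off $a_w=a_{wc^{-1}}$, and then pass from the generators $c$ to all of $Y(G)$ by induction on word length. The only cosmetic difference is that the paper states the coefficient identity and the induction more tersely, without separately justifying $C^+\in J(ZFG)$.
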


\begin{proof}
	Let $y = \sum_{g \in G} a_g g \in \soc(ZFG)$. For a conjugacy class $C = \{f,g\}$ of length two, we have $c \coloneqq g f^{-1} \in Y(G)$ and the condition $y \cdot C^+ = 0$ yields $a_x = a_{xc^{-1}}$ for all $x \in G$. By
	induction, this implies $a_x = a_{xc_1^{-1} \cdots c_n^{-1}}$ for every $x \in G$ and all elements $c_1, \ldots, c_n$ arising from $G$-conjugacy classes of length two as above. This shows that $y$ has constant coefficients on the cosets of $Y(G)$, that is, we obtain $y \in Y(G)^+ \cdot FG$.
\end{proof}

With this preliminary result, we obtain the following characterization of the $2$-groups $G$ for which $\soc(ZFG)$ is an ideal in $FG$.

\begin{theorem}\label{theo:characterization2groups}
The socle $\soc(ZFG)$ is an ideal in $FG$ if and only if $G' \subseteq Y(G) Z(G)$ holds.
\end{theorem}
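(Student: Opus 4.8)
The plan is to prove both implications by reducing to the quotient $\bar{G} = G/Z(G)$ and applying Lemma~\ref{lemma:correspondencecenter}, exactly as in the odd-characteristic case (Theorem~\ref{theo:odd}), but now tracking the extra contribution of $Y(G)$.

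For the ``if'' direction, suppose $G' \subseteq Y(G)Z(G)$. We want $\soc(ZFG) \subseteq (G')^+ \cdot FG$, which gives the claim by Lemma~\ref{lemma:socideal}. By Lemma~\ref{lemma:sociny} we already know $\soc(ZFG) \subseteq Y(G)^+ \cdot FG$, and by Lemma~\ref{lemma:centerpgroups}(i) we have $\soc(ZFG) \subseteq Z(G)^+ \cdot FG$; combining these, any $y \in \soc(ZFG)$ has constant coefficients on cosets of $\langle Y(G), Z(G)\rangle = Y(G)Z(G)$. Since $G' \subseteq Y(G)Z(G)$, the element $y$ is in particular constant on cosets of $G'$, hence $y \in (Y(G)Z(G))^+ \cdot FG \subseteq (G')^+\cdot FG$ is false in general — rather one gets $y \in (G')^+\cdot FG$ only after noting $(Y(G)Z(G))^+ \cdot FG \subseteq (G')^+\cdot FG$ whenever $G' \subseteq Y(G)Z(G)$, which is immediate since $(G')^+$ divides $(Y(G)Z(G))^+$ in the commutative algebra $FZ(G)$... here I should be careful: $Y(G)Z(G)$ need not be central, but $Y(G) \subseteq Z(\Phi(G))$ and $Z(G)$ is central, so $Y(G)Z(G)$ is abelian; and $(H_1)^+ \cdot FG \supseteq (H_2)^+ \cdot FG$ whenever $H_1 \subseteq H_2$ with $H_2$ a group, so from $G' \subseteq Y(G)Z(G)$ we get $(Y(G)Z(G))^+ FG \subseteq (G')^+ FG$. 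Thus $\soc(ZFG) \subseteq (G')^+\cdot FG$ and we are done by Lemma~\ref{lemma:socideal}.

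For the ``only if'' direction, I would argue contrapositively and by induction on $c(G)$, following the structure of Theorem~\ref{theo:odd}. If $c(G) \le 2$ there is nothing to prove since then $G' \subseteq Z(G) \subseteq Y(G)Z(G)$. So assume $G' \not\subseteq Y(G)Z(G)$ and pick a minimal counterexample. Passing to $\bar G = G/Z(G)$: by Corollary~\ref{cor:socquotient} it suffices to show that if $\soc(ZFG) \trianglelefteq FG$ then $\bar G$ also satisfies the hypothesis, so by minimality we may assume $c(\bar G) \le 2$, i.e. $c(G) \le 3$ and we reduce to the class-$3$ case. Here the key point is to produce an element $y \in ZF\bar G$ with $y \notin (\bar G')^+ \cdot F\bar G$ that annihilates every $\bar C^+$ with $\bar C \in \IDCLab$; then Lemma~\ref{lemma:correspondencecenter} gives $\soc(ZFG) \not\trianglelefteq FG$. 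The obstacle is that Proposition~\ref{prop:npcnonconstantsubgroups} and its underlying Remark~\ref{rem:prodelementaryabeliantrivial} fail for $p=2$, so one cannot simply use an arbitrary homomorphism $\bar G' \to F$; instead the hypothesis $G' \not\subseteq Y(G)Z(G)$ must be used to choose the homomorphism $\alpha\colon \bar G' \to F$ cleverly. The natural choice: since $G' Z(G)/Z(G) = \bar G'$ and $Y(G)Z(G)/Z(G) \subsetneq \bar G'$ is a proper subgroup (this uses $G' \not\subseteq Y(G)Z(G)$), pick $\alpha\colon \bar G' \to F$ nontrivial but with $Y(G)Z(G)/Z(G) \subseteq \ker\alpha$. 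Define $y = \sum_{g \in \bar G'} \alpha(g) g \in F\bar G' \subseteq ZF\bar G$. Then $y \notin (\bar G')^+ F\bar G$ since $\alpha$ is nonconstant. For $\bar C \in \IDCLab$ with $c \in \bar C$: because $c(\bar G) \le 2$, we have $\bar C = Sc$ with $S = \{[g,c] : g \in \bar G\}$ a subgroup of $\bar G'$, and $|S| = |\bar C| > 1$. The product $y \cdot (Sc)^+$ has $w$-coefficient $\sum_{s \in S}\alpha(ws^{-1}c^{-1}) = \alpha(wc^{-1})\cdot\bigl(\sum_{s\in S}\alpha(s)^{-1}\bigr)\cdot(\text{sign adjustments})$; this vanishes provided $\sum_{s\in S}\alpha(s^{-1}) = 0$ in $F$. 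Now $S$ is a subgroup of $\bar G'$; if $S \subseteq \ker\alpha$ the sum is $|S| \cdot 1 = 0$ (as $|S|$ is even), and if $S \not\subseteq \ker\alpha$ then $\alpha|_S$ surjects onto a nontrivial subgroup of $(F,+)$ of order $2$, i.e. $\{0, 1\}$, and the sum over the coset decomposition gives $\tfrac{|S|}{2}(0+1) = 0$ since $|S|/2$ is... even only if $|S| \ge 4$; for $|S| = 2$ one gets $1 \ne 0$. So the delicate point is precisely the classes $\bar C$ with $|\bar C| = 2$: but these correspond under the quotient map to conjugacy classes of $G$ of length two (using that $\IDCLab$ consists of images of classes $C$ with $|C| = |\bar C|$), hence $S$ lies in the image of $Y(G)$ in $\bar G$, which by construction is inside $\ker\alpha$ — contradiction with $S \not\subseteq \ker\alpha$. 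This closes the case $|\bar C| = 2$ and hence all cases, giving $y \cdot \bar C^+ = 0$ for all $\bar C \in \IDCLab$ and finishing the induction base; the inductive step $c(G) > 3$ is handled verbatim as in Theorem~\ref{theo:odd} via Corollary~\ref{cor:socquotient}.

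The main obstacle, as indicated, is the failure of the ``product of all elements is trivial'' argument at $p = 2$: one must exploit the definition of $Y(G)$ to ensure the chosen $\alpha$ kills exactly the problematic length-two classes, and to verify that every $S$ arising from a class $\bar C$ of length $2$ in $\IDCLab$ really does come from $Y(G)$. I expect this bookkeeping — relating subgroups $S = \{[g,c] : g \in \bar G\}$ of order $2$ back to the generators $Y_C$ of $Y(G)$ via Remark~\ref{rem:fg} and the description of $\IDCLab$ — to be the technical heart of the proof, with everything else being a routine adaptation of the odd-characteristic argument.
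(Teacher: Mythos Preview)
Your ``if'' direction is correct and matches the paper's argument: combine Lemma~\ref{lemma:sociny} and Lemma~\ref{lemma:centerpgroups}(i) to get $\soc(ZFG) \subseteq (Y(G)Z(G))^+ \cdot FG \subseteq (G')^+ \cdot FG$, then apply Lemma~\ref{lemma:socideal}.

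The ``only if'' direction has a genuine gap. Your reduction to $c(\bar G) \le 2$ does not go through. In a minimal counterexample $G$ (so $\soc(ZFG) \trianglelefteq FG$ but $G' \not\subseteq Y(G)Z(G)$), Corollary~\ref{cor:socquotient} plus minimality give only $\bar G' \subseteq Y(\bar G)Z(\bar G)$, which is \emph{strictly weaker} than $c(\bar G) \le 2$: dihedral $2$-groups satisfy the former with arbitrarily large nilpotency class. Likewise, the inductive step ``verbatim as in Theorem~\ref{theo:odd}'' fails because the hypothesis $G' \not\subseteq Y(G)Z(G)$ need not descend to $\bar G$; in the odd case the hypothesis was $c(G)>2$, which \emph{does} descend. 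Without $c(\bar G)\le 2$ you lose the identification $\bar C = Sc$ with $S$ a subgroup of $\bar G'$, and with it the whole computation of $y\cdot \bar C^+$.

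The paper avoids the quotient entirely and works directly in $FG$. Given $G' \not\subseteq Y(G)Z(G)$, it picks a normal subgroup $N \trianglelefteq G$ with $Y(G)Z(G)\cap G' \subseteq N \subsetneq G'$ and $|G':N|=2$, sets $M \coloneqq Y(G)Z(G)N$, and shows that $M^+$ annihilates every basis element of $J(ZFG)$: elements $1+z$ with $z\in Z(G)$ because $Z(G)\subseteq M$; class sums of length~$2$ because the quotient $gf^{-1}$ lies in $Y(G)\subseteq M$; and class sums with $|C|\ge 4$ by a parity argument using $|G':N|=2$ (each $N$-coset inside the $G'$-coset containing $C$ meets $C$ in an even number of elements). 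Then $M^+\in\soc(ZFG)$ but $M\cap G'=N\subsetneq G'$ forces $M^+\notin (G')^+\cdot FG$. The index-$2$ parity trick is precisely the replacement for Remark~\ref{rem:prodelementaryabeliantrivial} that you were looking for, and it works without any assumption on $c(G)$.
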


\begin{proof}
Suppose $G' \not \subseteq Y(G) Z(G)$, so $Y(G)  Z(G) \cap G'$ is a proper subgroup of $G'$. By \cite[Theorem III.7.2]{HUP67}, there exists a subgroup $N \trianglelefteq G$ with $Y(G) Z(G) \cap G' \subseteq N \subseteq G'$ and $|G' : N| = 2$. We set $M \coloneqq Y(G)Z(G) N$. Note that $M^+ \in ZFG$ holds since $M$ is a normal subgroup of $G$. We now show that $M^+$ annihilates the basis of $J(ZFG)$ given in Theorem~\ref{theo:structjzfg}. 
	\bigskip
	
	For $z \in Z(G) \subseteq M$, we have $(1+z) \cdot M^+ = 0$. For a $G$-conjugacy class $C = \{f,g\}$ of length two, we obtain $C^+ \cdot Y(G)^+ = f Y(G)^+ + g Y(G)^+ = 0$ since $g f^{-1} \in Y(G)$ holds. Hence $M^+$ annihilates $C^+$. Every conjugacy class $C \in \Cl(G)$ with $|C| \geq 4$ contains an even number of elements in every coset of $N$ since $C$ is contained in a coset of $G'$ and $|G' : N| = 2$ holds. This implies that $C^+$ is annihilated by $N^+$ and hence by $M^+$. Summarizing, we obtain $M^+ \in \soc(ZFG)$. Moreover, $M \cap G' = N \subsetneq G'$ implies $M^+ \notin (G')^+ \cdot FG$. By Lemma \ref{lemma:socideal}, this yields $\soc(ZFG) \not\trianglelefteq FG$.
	\bigskip
	
	Conversely, assume that $G' \subseteq Y(G) Z(G)$ holds. By Lemmas~\ref{lemma:centerpgroups} and~\ref{lemma:sociny}, we have $$\soc(ZFG) \subseteq (Y(G) Z(G))^+ \cdot FG \subseteq (G')^+ \cdot FG$$ and hence $\soc(ZFG)$ is an ideal of $FG$ (see Lemma \ref{lemma:socideal}). 
\end{proof}

This completes the proof of Theorem~\ref{theo:pgroups}.

\begin{Remark}
Similarly to the case of odd characteristic, $\soc(ZFG) \trianglelefteq FG$ holds if $G$ has nilpotency class at most two. 
\end{Remark}

However, the next example demonstrates that in contrast to the case of odd characteristic, the nilpotency class of a finite $2$-group $G$ for which $\soc(ZFG)$ is an ideal in $FG$ can be arbitrarily large.

\begin{Example}
$\null$ 
\begin{enumerate}[(i)]
\item Let $G = D_{2^n} = \langle r, s \colon r^{2^{n-1}} = s^2 = 1,\ srs = r^{-1} \rangle$ with $n \in \N$ be the dihedral group of order $2^n$. For $n \leq 2$, $G$ is abelian and hence $\soc(ZFG) \trianglelefteq FG$ holds. For $n \geq 3$, we have $G' = \langle r^2 \rangle = Y(G) Z(G)$ and hence $\soc(ZFG) \trianglelefteq FG$ follows by Theorem \ref{theo:characterization2groups}. The $2$-groups of maximal class of a fixed order are isoclinic. Therefore, by Theorem~\ref{theo:isoclinic}, $\soc(ZFG)$ is an ideal in $FG$ if $G$ is a semihedral or generalized quaternion $2$-group.
\item By \cite[Theorem 4.12]{BRE221}, every $2$-group $G$ of order at most $16$ satisfies $\soc(ZFG) \trianglelefteq FG$. Up to isomorphism, there exist $51$ groups of order $32$. Out of those, $7$ groups are abelian and $26$ groups have nilpotency class precisely two. Additionally, $13$ groups satisfy the property $G' \subseteq Y(G) Z(G)$.
\item Consider the holomorph $G = \Z/8\Z \rtimes (\Z/8\Z)^\times$ of $\Z/8\Z$, which has order $32$. It has $11$ conjugacy classes and we have $|Z(G)| = 2$. Since $G/Z(G) \cong D_8 \times C_2$ has precisely $10$ conjugacy classes, the images of the non-central conjugacy classes of $G$ in $G/Z(G)$ are pairwise distinct. For every such conjugacy class $C$, we therefore have $Z(G) C \subseteq C$ and hence $\nu_{Z(G)}(C^+) = 0$. This proves $J(ZFG)^2 = 0$, so $J(ZFG) = \soc(ZFG)$ follows. In particular, we obtain $\dim \soc(ZFG) = \dim J(ZFG) = 10$. Due to $|G'| = 4$, the space $(G')^+ \cdot FG$ is eight-dimensional, so it does not contain $\soc(ZFG)$. By Lemma~\ref{lemma:socideal}, $\soc(ZFG)$ is not an ideal in $FG$.  
\end{enumerate}
\end{Example}

We conclude this part with a generalization of Theorem~\ref{theo:pabeliansocidealeven} to arbitrary finite groups, which is a stronger variant of Theorem~\ref{theo:pabeliansocideal}: 

\begin{theorem}\label{theo:pabeliansocidealeven}
Let $G$ be an arbitrary finite group which satisfies $G' \subseteq Y(O_2(G)) Z(O_2(G))$. Then $\soc(ZFG)$ is an ideal of $FG$. 
\end{theorem}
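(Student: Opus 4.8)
The plan is to reduce the claim to the previously established results about blocks and quotient groups, together with the $p$-group classification. First I would set $p=2$ and $P \coloneqq O_2(G)$, and observe that by hypothesis $G' \subseteq Y(P)Z(P) \subseteq P$, so $G' \subseteq O_2(G)$ and hence, by Theorem~\ref{theo:reynoldsideal}, $G = P \rtimes H$ with $P \in \Syl_2(G)$ and an abelian $2'$-group $H$; in particular $R(FG) \trianglelefteq FG$. By Lemma~\ref{lemma:opstrich} it then suffices to prove $\soc(ZF\bar{G}) \trianglelefteq F\bar{G}$ for $\bar{G} = G/O_{2'}(G)$, and since $Y(O_2(\bar G))Z(O_2(\bar G))$ contains the image $\bar{G}'$ of $G'$ (images of conjugacy classes of length two are again conjugacy classes of length at most two, and $O_2$ and $Z$ behave well under this quotient), we may assume $O_{2'}(G) = 1$ from the start.

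Next I would pass to the principal block. With $O_{2'}(G)=1$, every block is isomorphic to $B_0$ (by \cite[Proposition 4.1]{KUL20}, since $R(FG)\trianglelefteq FG$), so by Remark~\ref{rem:blocks} it is enough to show $\soc(Z(B_0)) \trianglelefteq B_0$; and by Remark~\ref{rem:blocks} again, $B_0$ for $FG$ is isomorphic to $B_0$ for $F(G/O_{2'}(G)) = FG$, so no further reduction is gained here but the block-theoretic picture is in place. The real reduction is to compare $G$ with $P = O_2(G)$ itself: the point is that $Y(P)Z(P) \supseteq G' \supseteq P'$, so in particular $\soc(ZFP) \trianglelefteq FP$ by the $2$-group case of Theorem~\ref{theo:pgroups} (Theorem~\ref{theo:characterization2groups}). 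I would then run an argument parallel to the proof of Theorem~\ref{theo:pabeliansocideal}/Theorem~\ref{theo:odd}: show $\soc(ZFG) \subseteq Y(P)^+ \cdot FG$ by the same coefficient computation as in Lemma~\ref{lemma:sociny}, noting that a conjugacy class $\{f,g\}$ of $G$ of length two has $gf^{-1} \in Y(O_2(G))$ because $f,g$ lie in a common coset of $G' \subseteq P$ and their centralizers contain a Sylow $2$-subgroup up to conjugacy — more carefully, $C_G(f)$ has index $2$, so contains $O^2(G) \supseteq H$ and hence $gf^{-1}$ is a $2$-element centralized appropriately, placing it in $Y(P)$. Combined with the general inclusion $\soc(ZFG) \subseteq O_2(Z(G))^+ \cdot FG$ from Corollary~\ref{cor:zpdginsoc}, and with $Z(G) \cap P$ in place of $Z(P)$, one gets $\soc(ZFG) \subseteq (Y(P)Z(P))^+ \cdot FG \subseteq (G')^+ \cdot FG$, whence $\soc(ZFG) \trianglelefteq FG$ by Lemma~\ref{lemma:socideal}.

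I expect the main obstacle to be the step identifying $gf^{-1}$ with an element of $Y(O_2(G))$ for a length-two conjugacy class of the full group $G = P \rtimes H$, rather than of $P$ — one must verify that the relevant commutator element is genuinely a $2$-element lying in $P$ and that it arises (or is a product of elements arising) from length-two conjugacy classes of $P$ itself, which is where the structure $C_G(f) \supseteq \Phi(G)$-type argument from Remark~\ref{rem:fg} and Lemma~\ref{lemma:ygabelian} has to be adapted to the non-$p$-group setting. A secondary technical point is checking the behaviour of $Y(O_2(-))$, $Z(O_2(-))$ and length-two classes under the quotient by $O_{2'}(G)$, so that the reduction to $O_{2'}(G) = 1$ is clean; this should be routine but needs the observation that $O_2(G/O_{2'}(G)) = O_2(G) O_{2'}(G)/O_{2'}(G) \cong O_2(G)$ and that $Z$ and $Y$ transport along this isomorphism.
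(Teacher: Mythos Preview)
Your final inclusion does not close. Corollary~\ref{cor:zpdginsoc} gives $\soc(ZFG) \subseteq O_2(Z(G))^+ \cdot FG$, not $Z(P)^+ \cdot FG$; combining this with any $Y$-type bound yields at best $\soc(ZFG) \subseteq \bigl(Y\cdot O_2(Z(G))\bigr)^+ \cdot FG$ for some subgroup $Y \subseteq Y(P)$. But the hypothesis places $G'$ inside $Y(P)Z(P)$, and $O_2(Z(G))$ is in general a \emph{proper} subgroup of $Z(P)$ whenever $H$ acts nontrivially on $Z(P)$. A clean test case: take $P$ abelian with $H$ acting fixed-point-freely, e.g.\ $G=A_4$ with $p=2$. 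Then $Y(P)=1$, $Z(P)=P$, $O_2(Z(G))=C_P(H)=1$, and $G'=[P,H]=P$; both of your inclusions degenerate to $\soc(ZFG)\subseteq FG$, which says nothing, while the desired conclusion $\soc(ZFG)\subseteq P^+\cdot FG$ is true but unreached by your scheme. The Lemma~\ref{lemma:sociny} adaptation has a parallel defect: it uses $G$-classes of length two, which yield only the subgroup generated by the corresponding $gf^{-1}$, and there is no reason this equals $Y(P)$ (in the $A_4$ example there are no $G$-classes of length two at all); using $P$-classes instead is illegal because their class sums are not central in $FG$.

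The paper closes this gap by a structural step you do not attempt: from the hypothesis it deduces $Y(P)\subseteq C_P(H)'$, hence $[P,H]=[Z(P),H]$, and consequently $G=C_P(H)\times H[Z(P),H]$ as a \emph{direct} product. The first factor is a $2$-group with $C_P(H)'\subseteq Y(C_P(H))\,Z(C_P(H))$, so Theorem~\ref{theo:characterization2groups} applies; the second factor has its commutator subgroup inside the center of its Sylow $2$-subgroup, so Theorem~\ref{theo:pabeliansocideal} applies; and Theorem~\ref{theo:centralproduct} assembles the two. The nontrivial $H$-action on $Z(P)$, which is exactly what breaks your argument, is thereby isolated in the second direct factor where Theorem~\ref{theo:pabeliansocideal} handles it.
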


\begin{proof}
The given condition implies $G' \subseteq O_2(G)$, so by Theorem~\ref{theo:reynoldsideal}, we have $G = P \rtimes H$ with $P \coloneqq O_2(G) \in \Syl_2(G)$ and an abelian $2'$-group $H$. Note that $G'$ is abelian as $Y(P)$ is abelian (see Lemma~\ref{lemma:ygabelian}). By Remark~\ref{rem:trivianeu}, we have 
		\begin{equation}\label{eq:decompy}
		P = C_P(H) G' = C_P(H) Y(P) Z(P).
		\end{equation}
Since $G'$ is abelian, $C_P(H)'$ is normal in $P$. We consider the group $\bar{P} \coloneqq P/C_P(H)'$ and denote the image of $S \subseteq P$ in $\bar{P}$ by $\bar{S}$. Then we have 
\[
\overline{Y(P)} \subseteq \bar{P}' = \overline{(C_P(H)Y(P))'} = [\overline{C_P(H)}, \overline{Y(P)}] \subseteq [\bar{P}, \overline{Y(P)}].
\]

This implies $\overline{Y(P)} = 1$, so $Y(P) \subseteq C_P(H)'$ follows. 

		\bigskip
		
		By \eqref{eq:decompy}, we then have $[P,H] = [Z(P),H]$ and hence $P =C_P(H) [Z(P),H]$ follows. Since $C_P(H)$ centralizes $W \coloneqq H [Z(P),H]\subseteq H Z(P)$ and $C_P(H) \cap [Z(P),H]  = 1$ follows by \cite[Theorem 5.3.6]{GOR68}, we obtain $G = C_P(H) \times W$. It is then easily verified that $Y(P) = Y(C_P(H))$ holds. 
		With Dedekind's identity, we obtain $$C_P(H)' \subseteq G' \cap C_P(H) \subseteq Y(C_P(H)) Z(P) \cap C_P(H) \subseteq Y(C_P(H)) \cdot Z(C_P(H)).$$ By Theorem \ref{theo:characterization2groups}, $\soc(ZFC_P(H))$ is an ideal of $FC_P(H)$. Since $\soc(ZFW) \trianglelefteq FW$ follows by Theorem~\ref{theo:pabeliansocideal}, $\soc(ZFG)$ is an ideal in $FG$ by Theorem~\ref{theo:centralproduct}.
\end{proof}

This completes the proof of Theorem~\ref{theo:c}.

\section{\texorpdfstring{Decomposition of $G$ into a central product}{Decomposition of G into a central product}}\label{sec:decomp}

Let $F$ be an algebraically closed field of characteristic $p > 0$. 
We consider an arbitrary finite group $G$ for which $\soc(ZFG)$ is an ideal in $FG$. 
By Theorem~\ref{theo:reynoldsideal}, we may write $G = P \rtimes H$ with $P \in \Syl_p(G)$ and an abelian $p'$-group $H$. 
In this section, we prove Theorem~\ref{theo:decompositionofp}. Combined with the results on $p$-groups from the last section, it reduces our investigation to the case that $G'$ is a Sylow $p$-subgroup of $G$.

\setcounter{theoremintro}{3}
\begin{theoremintro}
We have $G = C_P(H) * O^p(G)$. Moreover, $\soc(ZFC_P(H))$ and $\soc(ZFO^p(G))$ are ideals in $FC_P(H)$ and $FO^p(G)$, respectively. The socle of $ZFG$ is explicitly given by 
	\[\soc(ZFG) = (Z(P)G')^+ \cdot FG.\]
\end{theoremintro}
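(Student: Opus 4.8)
The plan is to combine the structural result Proposition~\ref{prop:nilpotencyclassDG} with the central-product machinery of Section~\ref{sec:centralproducts} and the explicit socle descriptions obtained in the $p$-group sections. Since $\soc(ZFG) \trianglelefteq FG$, Lemma~\ref{lemma:selfcentralizing} tells us that $G' \subseteq C_P(N)N$ holds for every normal $p$-subgroup $N$ of $G$, so the hypotheses of Proposition~\ref{prop:nilpotencyclassDG} are met. Part~(ii) of that proposition then immediately yields the decomposition $P = C_P(H) \ast [P,H]$ and $G = C_P(H) \ast O^p(G)$, which is the first assertion. For the second assertion, I would like to invoke Theorem~\ref{theo:centralproduct}, but that theorem as stated gives the \emph{equivalence} $\soc(ZFG) \trianglelefteq FG \iff \soc(ZFG_i) \trianglelefteq FG_i$ for a central product $G = G_1 \ast G_2$; applying it with $G_1 = C_P(H)$ and $G_2 = O^p(G)$ directly gives that $\soc(ZFC_P(H))$ and $\soc(ZFO^p(G))$ are ideals in $FC_P(H)$ and $FO^p(G)$, respectively. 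So the first two claims follow with essentially no extra work.

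The substantive part is the explicit formula $\soc(ZFG) = (Z(P)G')^+ \cdot FG$. The inclusion $(Z(P)G')^+ \cdot FG \subseteq \soc(ZFG)$ is already Corollary~\ref{cor:zpdginsoc}, so only the reverse inclusion $\soc(ZFG) \subseteq (Z(P)G')^+ \cdot FG$ needs to be established. My plan is to reduce to the $p$-group case via the central product decomposition and the tensor/quotient behaviour of the socle. Concretely: $\soc(ZF(C_P(H) \times O^p(G))) = \soc(ZFC_P(H)) \otimes_F \soc(ZFO^p(G))$ by the multiplicativity of the socle of the center over tensor products (the result from~\cite{BRE221} used in the proof of Theorem~\ref{theo:centralproduct}), and $\soc(ZFG)$ is the image of this under the projection $\nu$ coming from $C_P(H) \times O^p(G) \twoheadrightarrow G$. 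For the factor $C_P(H)$: it has abelian $p'$-complement $O_{p'}(C_P(H))$ and its $p$-part is a $p$-group whose socle of the center, by Lemma~\ref{lemma:centerpgroups}(ii) (applicable since $\soc(ZFC_P(H)) \trianglelefteq FC_P(H)$), equals $(Z(\cdot)(\cdot)')^+ \cdot F(\cdot)$. For the factor $O^p(G)$: by Remark~\ref{rem:trivianeu}(ii) its commutator subgroup $O^p(G)' = [G,H]$ is a Sylow $p$-subgroup of $O^p(G)$, and one checks that $[G,H] \subseteq Z(\text{Sylow } p\text{-subgroup of } O^p(G))$ using $[P,G'] \subseteq Z(G')$ from Proposition~\ref{prop:nilpotencyclassDG}(i) together with $[G,H] = [[G',H],H]$; hence $O^p(G)'$ is central in its Sylow $p$-subgroup and Theorem~\ref{theo:pabeliansocideal} gives $\soc(ZFO^p(G)) = Z(\text{Syl})^+ \cdot FO^p(G)$. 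Pushing these two descriptions through the projection $\nu$ and identifying $Z(C_P(H)) \cdot Z(\text{Syl of } O^p(G))$-type products with $Z(P)$, and the commutator subgroups' product with $G'$ (using $G' = C_P(H)' \cdot [G,H]$, which follows from $P = C_P(H) \ast [P,H]$ and $[C_P(H),[P,H]] = 1$), should give exactly $(Z(P)G')^+ \cdot FG$.

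A cleaner alternative, which I would actually prefer to write out, avoids the tensor bookkeeping: use Lemma~\ref{lemma:socideal} together with the $H$-grading of Remark~\ref{rem:hgraded}. We already know $\soc(ZFG) \subseteq (G')^+ \cdot FG$ (Lemma~\ref{lemma:socideal}) and $\soc(ZFG) \subseteq O_p(Z(G))^+ \cdot FG \subseteq Z(P)^+\cdot FG$ from Corollary~\ref{cor:zpdginsoc}; combining these, every $y \in \soc(ZFG)$ has coefficients constant on cosets of $G'$ and on cosets of $O_p(Z(G))$. What remains is to upgrade ``constant on cosets of $O_p(Z(G))$'' to ``constant on cosets of $Z(P)$'', i.e.\ $\soc(ZFG) \subseteq Z(P)^+ \cdot FG$ with the full $Z(P)$ rather than only its central-in-$G$ part. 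For this I would apply Lemma~\ref{lemma:pabelianaux}: the hypothesis there, $[P,G'] \subseteq Z(G')$ hence $\Phi(G') \subseteq Z(G')$, is furnished by Proposition~\ref{prop:nilpotencyclassDG}(i), and one shows $G = C_G(H)Z(P)$ in the present situation (indeed $G = HC_P(H)G' = HC_P(H)Z(P)[P,G']$ and $[P,G']\subseteq Z(G')\subseteq C_P(H)\cdots$ — the needed containment $G' \subseteq C_P(H)Z(P)$ comes from $G' = C_P(H)'\,[G,H]$ with $C_P(H)' \subseteq Z(P)$ since $C_P(H)$ has class $\le 2$ with derived subgroup central in $G'$, hence in $P$). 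Then Theorem~\ref{theo:pabeliansocideal} (or directly Lemma~\ref{lemma:pabelianaux}) gives $\soc(ZFG) \subseteq Z(P)^+\cdot FG$, and intersecting with $(G')^+\cdot FG$ yields $\soc(ZFG) \subseteq (Z(P)\cap G' \text{-type})\cdots$; more carefully, an element constant on $Z(P)$-cosets and on $G'$-cosets is constant on $\langle Z(P), G'\rangle = Z(P)G'$-cosets, so $\soc(ZFG) \subseteq (Z(P)G')^+\cdot FG$.

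The main obstacle I anticipate is the second route's claim that $G = C_G(H)Z(P)$, equivalently $G' \subseteq C_P(H)Z(P)$: this rests on showing $C_P(H)' \subseteq Z(P)$, which needs $C_P(H)$ to have nilpotency class at most two \emph{and} its derived subgroup to be centralized by all of $P$ — both of which should follow from Proposition~\ref{prop:nilpotencyclassDG}(i) (via $C_P(H)' \subseteq G'' \subseteq Z(P)$, since $G'' \subseteq Z(P)$ is part of that proposition), so this is likely routine once unwound. If instead one follows the tensor-product route, the obstacle shifts to verifying that the projection $\nu\colon F(C_P(H)\times O^p(G)) \to FG$ carries $(Z(C_P(H))\,C_P(H)')^+ \otimes (Z(\mathrm{Syl})\,O^p(G)')^+ \cdot(\ldots)$ onto $(Z(P)G')^+\cdot FG$, which is a bookkeeping computation with central products that I would expect to go through cleanly given $P = C_P(H)\ast [P,H]$ and the identification of the relevant central and derived subgroups. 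Either way, the genuinely new input is Proposition~\ref{prop:nilpotencyclassDG}; everything else is assembly of prior results.
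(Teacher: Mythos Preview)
Your treatment of the first two assertions is correct and matches the paper exactly: Lemma~\ref{lemma:selfcentralizing} feeds Proposition~\ref{prop:nilpotencyclassDG}(ii), and Theorem~\ref{theo:centralproduct} then handles both factors. The inclusion $(Z(P)G')^+ \cdot FG \subseteq \soc(ZFG)$ via Corollary~\ref{cor:zpdginsoc} is also right.

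The gap is in the reverse inclusion. Both of your routes silently assume that $[P,H]$ is abelian, and this is false in general. In your tensor route you assert that $O^p(G)' = [G,H]$ lies in the center of the Sylow $p$-subgroup of $O^p(G)$; but that Sylow subgroup \emph{is} $[G,H]$, so you are claiming $[G,H]$ is abelian. In your preferred route you need $G = C_G(H)Z(P)$, i.e.\ $P = C_P(H)Z(P)$; since $Z(P) = Z(C_P(H))\,Z([P,H])$ in the central product $P = C_P(H)\ast[P,H]$, this forces $[P,H] \subseteq C_P(H)Z([P,H])$, and intersecting with $[P,H]$ (Dedekind) gives $[P,H] = Z([P,H])$, again abelian. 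Proposition~\ref{prop:nilpotencyclassDG}(i) only gives that $G'$ (hence $[P,H]$) has nilpotency class at most two, not one; Example~\ref{ex:172} exhibits a group with $\soc(ZFG)\trianglelefteq FG$ and $G' = [P,H]$ extraspecial of order~$27$, so genuinely nonabelian. Your justification ``$C_P(H)' \subseteq Z(P)$'' is also incomplete for $p=2$ (where $C_P(H)$ need not have class $\le 2$), but even granting it, the $[G,H]$ factor of $G'$ is what breaks the argument.

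The paper's argument is shorter and sidesteps this entirely. From $G = C_P(H) \ast O^p(G)$ one gets $Z(C_P(H)) \subseteq Z(G)$, hence $Z(C_P(H)) \subseteq O_p(Z(G))$, so Corollary~\ref{cor:zpdginsoc} gives $\soc(ZFG) \subseteq O_p(Z(G))^+\cdot FG \subseteq Z(C_P(H))^+\cdot FG$. Combined with $\soc(ZFG) \subseteq (G')^+\cdot FG$ from Lemma~\ref{lemma:socideal}, this yields $\soc(ZFG) \subseteq (Z(C_P(H))G')^+\cdot FG$. Finally $Z(P) = Z(C_P(H))\,Z([P,H]) \subseteq Z(C_P(H))\,G'$, so $Z(C_P(H))G' = Z(P)G'$ and the desired inclusion follows. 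No abelianness of $[P,H]$ and no appeal to Theorem~\ref{theo:pabeliansocideal} or Lemma~\ref{lemma:pabelianaux} is needed.
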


\begin{proof}
By Proposition~\ref{prop:nilpotencyclassDG}, we have $G = C_P(H) * O^p(G)$. Theorem~\ref{theo:centralproduct} then implies that $\soc(ZFC_P(H))$ and $\soc(ZFO^p(G))$ are ideals in $FC_P(H)$ and $FO^p(G)$, respectively.
It therefore remains to determine the structure of $\soc(ZFG)$. By the above decomposition, we have $Z(C_P(H)) \subseteq Z(G)$. By Corollary~\ref{cor:zpdginsoc}, we obtain $\soc(ZFG) \subseteq Z(C_P(H))^+ \cdot FG$. 
	Together with Lemma~\ref{lemma:socideal}, this implies $$\soc(ZFG) \subseteq (Z(C_P(H))G')^+ \cdot FG \subseteq (Z(P)G')^+ \cdot FG.$$ In the last step, we used $Z(P) = Z(C_P(H)) Z([G',H]) \subseteq Z(C_P(H)) G'$. On the other hand, we have $(Z(P)G')^+ \cdot FG \subseteq \soc(ZFG)$ by Lemma~\ref{lemma:312general}, which completes the proof.
\end{proof}

This result on the structure of $\soc(ZFG)$ generalizes the corresponding statement in Lemma~\ref{lemma:centerpgroups}. Note that, by Theorem~\ref{theo:decompositionofp}, the hypothesis that $\soc(ZFG)$ is an ideal in 
$FG$ implies $\dim \soc(ZFG) = |G:G'Z(G)|$. In particular, the dimension of $\soc(ZFG)$ is a divisor of $|G|$. We also observe that in this situation, $\soc(ZFG)$ is a principal ideal of $FG$ generated by a central element. Furthermore, we obtain the following reduction:

\begin{Remark}
Since the structure of the $p$-group $C_P(H)$ is determined by Theorem~\ref{theo:pgroups}, it suffices to investigate the group $O^p(G)$. Inductively, we may assume $O^p(G) =G$. By Remark~\ref{rem:trivianeu}, this yields $P = G' = [G',H]$. In particular, $C_{G'}(H) \subseteq G'' \subseteq Z(G')$ follows (see \cite[Theorem 5.2.3]{GOR68}), which implies $C_{G'}(H) \subseteq Z(G)$. If additionally $O_{p'}(G) = 1$ holds, we obtain $C_{G'}(H) = Z(G)$. 
\end{Remark}

Moreover, we state the following consequence of Theorem~\ref{theo:decompositionofp}: 

\begin{theorem}
	We have $\soc(ZFP) \trianglelefteq FP$. In particular, the group $P$ is metabelian and its nilpotency class is at most two if $p$ is odd.
\end{theorem}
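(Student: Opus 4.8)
The plan is to reduce the statement to the central product decomposition $P = C_P(H) * [P,H]$, which is the $p$-part of the decomposition $G = C_P(H) * O^p(G)$ from Theorem~\ref{theo:decompositionofp} and is recorded in Proposition~\ref{prop:nilpotencyclassDG}(ii), and then to apply Theorem~\ref{theo:centralproduct} together with the classification of $p$-groups. So the whole statement will be assembled from results already available.

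First, $\soc(ZFC_P(H)) \trianglelefteq FC_P(H)$ holds by Theorem~\ref{theo:decompositionofp}. For the second factor, I would note that $[P,H] = [G,H] \subseteq G'$, and that $G'$ has nilpotency class at most two by Proposition~\ref{prop:nilpotencyclassDG}(i), whose hypotheses are met in the present situation by Lemma~\ref{lemma:selfcentralizing}. Since the nilpotency class does not increase on passing to subgroups, $[P,H]$ has nilpotency class at most two, that is, $[P,H]' \subseteq Z([P,H])$. Hence Theorem~\ref{theo:pabeliansocideal} (applied with trivial $p'$-complement), or equivalently part~(i) of Theorem~\ref{theo:pgroups}, gives $\soc(ZF[P,H]) \trianglelefteq F[P,H]$. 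Applying Theorem~\ref{theo:centralproduct} to $P = C_P(H) * [P,H]$ then yields $\soc(ZFP) \trianglelefteq FP$.

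The remaining claims follow immediately: $P$ is metabelian by the lemma in Section~\ref{sec:npc} asserting that a $p$-group whose socle of the center is an ideal of its group algebra is metabelian, and if $p$ is odd then Theorem~\ref{theo:odd} forces the nilpotency class of $P$ to be at most two. There is no genuine obstacle here; the statement is essentially a corollary of Theorems~\ref{theo:decompositionofp} and~\ref{theo:centralproduct} and the $p$-group classification. The one point that requires care is that the factor $[P,H]$, which is in general neither a quotient nor a block of $G$, nonetheless satisfies $\soc(ZF[P,H]) \trianglelefteq F[P,H]$: this is not covered by the transfer lemmas of Section~\ref{sec:quotientgroups}, but it is supplied by the nilpotency‑class‑two observation above, obtained from $[P,H] \subseteq G'$ and Proposition~\ref{prop:nilpotencyclassDG}(i).
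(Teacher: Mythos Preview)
Your proposal is correct and follows essentially the same route as the paper: decompose $P = C_P(H) * [P,H]$ via Proposition~\ref{prop:nilpotencyclassDG}, verify the socle condition on each factor (the first via Theorem~\ref{theo:decompositionofp}, the second via the nilpotency-class-two observation for $[P,H] \subseteq G'$), and assemble with Theorem~\ref{theo:centralproduct}. The only cosmetic difference is that the paper cites Theorem~\ref{theo:pgroups} for the metabelian and nilpotency-class conclusions, whereas you invoke the dedicated lemma and Theorem~\ref{theo:odd} directly; the content is the same.
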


\begin{proof}
	By Theorem~\ref{theo:decompositionofp}, we have $P = C_P(H) * [P,H]$ and $\soc(ZFC_P(H))$ is an ideal in $FC_P(H)$. Since $[P,H] \subseteq G'$ has nilpotency class at most two (see Proposition~\ref{prop:nilpotencyclassDG}), we obtain $\soc(ZF[P,H]) \trianglelefteq F[P,H]$ by Theorem~\ref{theo:pgroups}. By Theorem~\ref{theo:centralproduct}, this yields $\soc(ZFP) \trianglelefteq FP$. In particular, it follows that $P$ is metabelian and that the nilpotency class of $P$ is at most two if $p$ is odd (see Theorem~\ref{theo:pgroups}).
\end{proof}

\section*{Acknowledgments}
The results of this paper form a part of the PhD thesis of the first author \cite{BRE22}, which was supervised by the second author.

\bibliographystyle{plain}
\bibliography{bibpgroups.bib}
\end{document}